\patchcmd{\subsection}{-.5em}{.5em}{}{}
\patchcmd{\subsubsection}{-.5em}{.5em}{}{}
\numberwithin{equation}{section}
\newcommand{\SL}{\operatorname{SL}}
\newcommand{\SU}{\operatorname{SU}}
\newcommand{\GL}{\operatorname{GL}}
\newcommand{\cA}{\mathcal{A}}
\newcommand{\cB}{\mathcal{B}}
\newcommand{\cH}{\mathcal{H}}
\newcommand{\cL}{\mathcal{L}}
\newcommand{\cO}{\mathcal{O}}
\newcommand{\cP}{\mathcal{P}}
\newcommand{\cT}{\mathcal{T}}
\newcommand{\cX}{\mathcal{X}}
\newcommand{\bC}{\mathbb{C}}
\newcommand{\bD}{\mathbb{D}}
\newcommand{\bF}{\mathbb{F}}
\newcommand{\bN}{\mathbb{N}}
\newcommand{\bP}{\mathbb{P}}
\newcommand{\bQ}{\mathbb{Q}}
\newcommand{\bR}{\mathbb{R}}
\newcommand{\bS}{\mathbb{S}}
\newcommand{\bZ}{\mathbb{Z}}
\newcommand{\ra}{\rightarrow}
\newcommand{\qand}{\quad \textrm{and} \quad}
\newcommand\subsetsim{\mathrel{%
\ooalign{\raise0.2ex\hbox{$\subset$}\cr\hidewidth\raise-0.8ex\hbox{\scalebox{0.9}{$\sim$}}\hidewidth\cr}}}
\newcommand{\eps}{\varepsilon}
\DeclareMathOperator{\pr}{pr}
\DeclareMathOperator{\supp}{supp}
\DeclareMathOperator{\Stab}{Stab}
\DeclareMathOperator{\End}{End}
\DeclareMathOperator{\BndEnt}{BndEnt}
\DeclareMathOperator{\Ent}{Ent}
\DeclareMathOperator{\Prob}{Prob}
\DeclareMathOperator{\BS}{BS}
\DeclareMathOperator{\SubSum}{SubSum}
\theoremstyle{theorem}
\newtheorem{theorem}{Theorem}[section]
\newtheorem{corollary}[theorem]{Corollary}
\newtheorem{proposition}[theorem]{Proposition}
\newtheorem{lemma}[theorem]{Lemma}
\newtheorem*{conjecture}{Conjecture}
\theoremstyle{definition}
\newtheorem{definition}[theorem]{Definition}
\newtheorem{remark}[theorem]{Remark}
\newtheorem{example}{Example}[section]
\newtheorem{question}{Question}
\tikzstyle{decision} = [diamond, draw, fill=blue!20, 
\tikzstyle{block} = [rectangle, draw, fill=blue!20, 
\tikzstyle{line} = [draw, -latex']
\tikzstyle{cloud} = [draw, ellipse,fill=red!20, node distance=3cm,
\renewcommand\labelenumi{(\roman{enumi})}
\renewcommand\theenumi\labelenumi
\DeclarePairedDelimiterX\Set[2]{\{}{\}}{#1\,\delimsize\vert\,#2}
\newenvironment{myindentpar}[1]%
  {\begin{list}{}%
          {\setlength{\leftmargin}{#1}}%
          \item[]%
  }
  {\end{list}}
\begin{document}
\bibliographystyle{plain} 

\title{Random walks on dense subgroups of locally compact groups}

\author{Michael Bj\"orklund}

\address{Department of Mathematics, Chalmers, Gothenburg, Sweden}
\email{micbjo@chalmers.se}

\author{Yair Hartman}
\address{Department of Mathematics, Ben Gurion University of the Negev, Be'er-Sheva, Israel}
\email{hartmany@bgu.ac.il}

\author{Hanna Oppelmayer}
\address{Department of Mathematics, Chalmers, Gothenburg, Sweden}
\email{hannaop@chalmers.se}
\date{}

\thanks{MB was partially supported by VR-grant 11253320, YH was partially supported by ISF grant 1175/18.}
\date{}

\keywords{Poisson boundaries of random walks on groups, Hecke pairs}

\subjclass[2010]{Primary: 37A40; Secondary: 05C81, 58J51}

\begin{abstract}
Let $\Gamma$ be a countable discrete group, $H$ a lcsc totally disconnected group and $\rho : \Gamma \ra H$ a homomorphism with dense image. 
We develop a general and explicit technique which provides, for every compact open subgroup $L < H$ and bi-$L$-invariant probability measure $\theta$
on $H$, a Furstenberg discretization $\tau$ of $\theta$ such that the Poisson boundary of $(H,\theta)$ is a $\tau$-boundary. Among other things, this technique allows us to construct examples of finitely supported random walks on certain lamplighter groups and solvable Baumslag-Solitar groups, whose Poisson boundaries are prime, but not $L^p$-irreducible for any $p \geq 1$, answering a conjecture of Bader-Muchnik in the negative. Furthermore, we 
give an example of a countable discrete group $\Gamma$ and two spread-out probability measures $\tau_1$ and $\tau_2$ on $\Gamma$ such that the boundary entropy spectrum of $(\Gamma,\tau_1)$ is an interval, while the boundary entropy spectrum of $(\Gamma,\tau_2)$ is a Cantor set. 
\end{abstract}

\maketitle


\section{Introduction}

\subsection{Furstenberg discretizations}
\label{subsec:furstenbergdisc}
Let $\mu$ be a Borel probability measure on a locally compact and second countable (lcsc) group $G$. We say that $\mu$ is \emph{spread-out} if it is absolutely continuous with respect to the Haar measure on $G$ and if its support generates $G$ as a semigroup. If $\mu$ is spread-out, we say that 
$\mu$ is a \emph{random walk} on $G$, and we refer to the pair $(G,\mu)$ as a \emph{measured group}. Let $(X,\cB_X)$ be a measurable space, 
endowed with a jointly Borel measurable action of $G$, and let $\xi$ be a probability measure on $\cB_X$. We say that $\xi$ is \emph{$\mu$-stationary}
if $\mu * \xi = \xi$. If $\xi$ is $\mu$-stationary, $(X,\xi)$ is a \emph{Borel $(G,\mu)$-space}.

Let $\Gamma$ be a countable discrete group and let $H$ be a lcsc group. Suppose that $\rho : \Gamma \ra H$ is a homomorphism with dense image. Suppose that $\theta$ is a spread-out Borel probability measure on $H$. We say that a spread-out probability measure $\tau$ on $\Gamma$ is a 
\emph{Furstenberg discretization} of $\theta$ (with respect to $\rho)$ if every Borel $(H,\theta)$-space is also a $(\Gamma,\tau)$-space, where $\Gamma$ acts via $\rho$ (equivalently, $\tau$ is a Furstenberg discretization of $\theta$ if the Poisson boundary of $(H,\theta)$ is $\tau$-stationary). The following theorem is due to Furstenberg \cite{Fur4}, see also \cite[Chapter VI, Proposition 4.1]{Ma} for a more detailed exposition.

\begin{theorem}[Furstenberg]
\label{thm0}
Suppose that $H$ is compactly generated and $\theta$ is compactly supported. Then $\theta$ admits a Furstenberg discretization $\tau$
with respect to $\rho$.
\end{theorem}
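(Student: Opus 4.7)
The plan is to build $\tau$ by discretizing $\theta$ against a fine partition of its compact support using density of $\rho(\Gamma)$, and then verify $\tau$-stationarity of every $(H,\theta)$-stationary measure by a regularization argument that exploits the absolute continuity of $\theta$ with respect to Haar on $H$.

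\emph{Construction.} Let $K \subset H$ be compact with $\theta(K) = 1$, and let $(V_k)_{k \geq 1}$ be a decreasing sequence of symmetric relatively compact open neighborhoods of the identity with $\bigcap_k V_k = \{e\}$. Since $\rho(\Gamma)$ is dense in $H$, the sets $\{\rho(\gamma) V_k : \gamma \in \Gamma\}$ form an open cover of $K$; by compactness, finitely many $\gamma_{k,1}, \dots, \gamma_{k,n_k}$ suffice. Refining into a Borel partition $K = \bigsqcup_i A_{k,i}$ with $A_{k,i} \subset \rho(\gamma_{k,i}) V_k$ yields a Borel rounding $\alpha_k \colon K \to \Gamma$ and a finitely supported probability measure $\tau_k := (\alpha_k)_* \theta$. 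Setting $\tau := \sum_{k \geq 1} 2^{-k}\, \tau_k$ and forcing a fixed generating subset of $\Gamma$ into $\supp(\tau_1)$ ensures that $\tau$ is spread-out.

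\emph{Verification.} For a Borel $(H,\theta)$-space $(X,\xi)$ and any bounded Borel $\phi \colon X \to \bR$, the function $F_\phi(h) := \int_X \phi\, d(h_* \xi)$ is Borel, $\theta$-stationarity of $\xi$ reads $\int_K F_\phi\, d\theta = \xi(\phi)$, while $\tau_k * \xi = \xi$ would read $\int_K F_\phi \circ \rho \circ \alpha_k\, d\theta = \xi(\phi)$. Their difference
\[
\int_K \bigl(F_\phi(\rho(\alpha_k(h))) - F_\phi(h)\bigr)\, d\theta(h)
\]
involves only the $V_k^{-1}$-perturbation $\rho(\alpha_k(h))^{-1} h$. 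If this error tends to $0$ as $k \to \infty$, uniformly over $\|\phi\|_\infty \leq 1$, then $\tau_k * \xi \to \xi$ weakly and, by a convex-combination argument, $\tau * \xi = \xi$.

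\emph{Main obstacle.} The crux is the vanishing of the rounding error. Since $F_\phi$ is only Borel, shrinking $V_k$ does not directly control the perturbation. The resolution uses that $\theta$ is absolutely continuous with respect to Haar: writing $d\theta = \psi\, d\lambda$ with $\psi \in L^1(H)$ of compact support, classical regularization of bounded Borel functions by convolution against an $L^1$ density implies that $h \mapsto \int F_\phi(hg)\, \psi(g)\, dg$ is continuous, and a Lebesgue-differentiation argument for $\psi$ applied along the partitions $\{A_{k,i}\}$ (whose diameters can be arranged to shrink with $k$) forces the rounding error to zero. This is the step where the absolute-continuity assumption inherent in the definition of spread-out is essential; without it, the Borel-but-not-continuous action $h \mapsto h_*\xi$ could not be tamed by the rounding.
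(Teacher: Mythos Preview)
Your argument contains a genuine gap at the ``convex-combination'' step, and the approach as written cannot deliver the exact identity $\tau * \xi = \xi$ that the definition of Furstenberg discretization requires. Concretely: you define $\tau = \sum_{k \geq 1} 2^{-k}\tau_k$, so that
\[
\tau * \xi \;=\; \sum_{k \geq 1} 2^{-k}\,(\tau_k * \xi).
\]
Even if you succeed in proving $\|\tau_k * \xi - \xi\|_{\mathrm{TV}} \to 0$ (uniformly in $\phi$ or not), this only says that the summands approach $\xi$; it says nothing about the convex combination equaling $\xi$. A trivial example on a two-point space: if $\tau_k * \xi = \xi + \varepsilon_k(\delta_0 - \delta_1)$ with $\varepsilon_k \downarrow 0$ but $\varepsilon_k > 0$, then $\tau * \xi = \xi + \bigl(\sum_k 2^{-k}\varepsilon_k\bigr)(\delta_0 - \delta_1) \neq \xi$. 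There is no averaging mechanism here that cancels the individual discrepancies; you would need each $\tau_k * \xi$ to equal $\xi$ exactly, and your rounding construction gives no reason to expect that. (Separately, the uniform-in-$\phi$ vanishing of the rounding error is itself suspect: you are evaluating the merely Borel function $F_\phi$ at a single nearby point, not averaging it, so neither Lebesgue differentiation nor convolution regularization of $F_\phi * \check\psi$ controls $F_\phi(\rho(\alpha_k(h))) - F_\phi(h)$ pointwise.)

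The Furstenberg--Margulis argument that the paper invokes is structurally different and avoids this issue. One does \emph{not} attempt to approximate $\theta$ by discrete measures and pass to a limit. Instead one introduces a Haar-measurable function $L : H \to [0,\infty)$ that, roughly, measures for each $g \in H$ the optimal discrepancy $\inf_{\tau'} \|\rho_*\tau' * (g\nu) - g\nu\|$ (the infimum over suitable finitely supported $\tau'$ on $\Gamma$) on the Poisson boundary $(B,\nu)$ of $(H,\theta)$. One then shows that $L$ is left $\rho(\Gamma)$-invariant; since $\rho(\Gamma)$ is dense in $H$, $L$ is almost surely constant, and a short additional argument forces this constant to be zero. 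The vanishing of the infimum at a single point then yields (via a compactness/diagonal argument on $\Prob(\Gamma)$) a single $\tau$ with $\rho_*\tau * \nu = \nu$ exactly. The point is that exact stationarity is obtained from an \emph{infimum being zero}, not from a limit of approximately-stationary measures; this is what your construction is missing.
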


\begin{remark}
In \cite{Fur4}, Furstenberg only considered the setting when $\rho(\Gamma)$ is a a \emph{lattice subgroup} in $H$, so in particular, $\rho(\Gamma)$ is not 
dense in $H$. Hence our version of Theorem \ref{thm0} is not explicitly stated in \cite{Fur4} or \cite{Ma},  but it can be readily proved along the same lines as \cite[Chapter VI, Proposition 4.1]{Ma}, using that the Haar measurable function $L : G \ra [0,\infty)$ constructed there is left-invariant under $\rho(\Gamma)$, whence almost surely constant since $\rho(\Gamma)$ is dense in $G$. 

In the case when $\rho(\Gamma)$ is a lattice, Furstenberg instead employs an ingenious convexity argument to ensure that $L$ is almost surely constant. Once it has been established that $L$ is constant, the rest of the argument goes through as before.
\end{remark}

Before we move on to the main theme of this paper, let us briefly list some problematic aspects concerning the construction of Furstenberg discretizations in Theorem \ref{thm0}. 
\vspace{0.1cm}
\begin{itemize}
\item[\textsc{(A1)}] In general we cannot guarantee that $\tau$ is finitely supported (if $\Gamma$ is a finitely generated group). \vspace{0.2cm}
\item[\textsc{(A2)}] It is not known whether $\tau$ can always be chosen so that the Poisson boundary of $(H,\theta)$ is $\tau$-proximal (cf. Remark \ref{remark_prox}). \vspace{0.2cm}
\item[\textsc{(A3)}] Since the construction of $\tau$ from $\theta$ is very indirect, explicit computations with $\tau$ are usually very demanding. For 
instance, it seems like a daunting task in general to compute the Furstenberg entropy of a Borel $(H,\theta)$-space with respect to $\tau$. \vspace{0.1cm}
\end{itemize}

\begin{remark}
In the setting when $\rho(\Gamma)$ is a lattice, the aspect \textsc{(A2)} has been addressed (and affirmatively answered) in many special cases, see e.g.
\cite{BL, CM1, CM2, DD, LS}. In fact, in these cases, the Poisson boundary of $(H,\theta)$ is also the Poisson boundary of $(\Gamma,\tau)$. We stress that this may not longer be the case if $\rho(\Gamma)$ is dense in $H$. For instance, if $\Gamma = \SL_2(\bQ)$, $H = \SL_2(\bR)$ and $\rho$ is the standard inclusion, then for every spread-out probability measure $\theta$ on $H$ and for \emph{every} Furstenberg discretization $\tau$ of $\theta$, the Poisson boundary of $(H,\theta)$ is \emph{never} a maximal $\tau$-boundary (this is because the embeddings of $\Gamma$ into $\SL_2(\bQ_p)$ for different primes $p$ always contribute to the Poisson boundary of $\Gamma$; see e.g. \cite{BrSc} for a more detailed discussion about this point). 
\end{remark}

In this paper we study Furstenberg discretizations in the case when $H$ is \emph{totally disconnected}. Given a dense embedding $\rho$ of a countable 
group $\Gamma$ into a totally disconnected lcsc group $H$, and a compact and open subgroup $L$ of $H$, we shall introduce a convex set of probability 
measures on $\Gamma$ (called aborbing measures) and define an explicit \emph{surjective} affine map from the set of absorbing measures \emph{onto} the set of bi-$L$-invariant probability measures on $H$ such that every absorbing measure $\tau$ on $\Gamma$ is a Furstenberg discretization (with respect to $\rho$) of the image measure $\theta_\tau$ under this affine map. \\

Our novel construction is summarized in Theorem \ref{thm2} and Corollary \ref{cor1_Thm2} below, and some sample applications are given in Theorem \ref{thm3} and Theorem \ref{thm4}. 

\subsection{Hecke pairs and absorbing measures}

We shall now introduce the key players in this paper: \emph{absorbing measures}. Let $\Gamma$ be a countable group, and 
let $\Lambda$ be a subgroup of $\Gamma$. We denote by $\Prob(\Gamma)$ and $\Prob(\Gamma/\Lambda)$ the space of probability measures on $\Gamma$ and $\Gamma/\Lambda$ respectively,
and we write $\Prob(\Gamma/\Lambda)^\Lambda$ for the subset of $\Prob(\Gamma/\Lambda)$ consisting of $\Lambda$-invariant probability measures. 
Let $\alpha : \Gamma \ra \Gamma/\Lambda$ be the canonical projection map, and write $\alpha_*$ for the induced map between $\Prob(\Gamma)$ 
and $\Prob(\Gamma/\Lambda)$, given by
\[
\overline{\tau}(\gamma \Lambda) := \alpha_*\tau(\gamma \Lambda) = \sum_{\lambda \in \Lambda} \tau(\gamma \lambda), \quad \textrm{for $\gamma \Lambda \in \Gamma/\Lambda$}.
\]
The set $\Prob(\Gamma;\Lambda)$ of \emph{$\Lambda$-absorbing probability measures on $\Gamma$} is defined by
\[
\Prob(\Gamma ;\Lambda) = \big\{ \tau \in \Prob(\Gamma) \, : \, \alpha_*\tau \in \Prob(\Gamma/\Lambda)^\Lambda \big\}.
\] 
Note that if $\Lambda$ is \emph{normal}, then every probability measure on $\Gamma$ is $\Lambda$-absorbing. \\

We say that $(\Gamma,\Lambda)$ is a \emph{Hecke pair} if for every $\gamma \in \Gamma$, the subgroup $\Lambda \cap \gamma \Lambda \gamma^{-1}$ has finite index in $\Lambda$; or equivalently, if every $\Lambda$-orbit in $\Gamma/\Lambda$ is finite. In this case, it is also common to refer to $\Lambda$ as an \emph{almost normal} (or \emph{commensurated}) subgroup of $\Gamma$. \\

We collect some basic properties of $\Prob(\Gamma ; \Lambda)$ in the following theorem. 

\begin{theorem}
\label{thm1}
Let $\Gamma$ be a countable group and let $\Lambda$ be a subgroup of $\Gamma$.
\vspace{0.1cm}
\begin{enumerate}
\item[\textsc{(i)}] For all $\tau_1, \tau_2 \in \Prob(\Gamma ; \Lambda)$,
\[
\alpha_*(\tau_1 * \tau_2)(\gamma \Lambda) = \sum_{\eta \Lambda} \overline{\tau}_1(\eta \Lambda) \, \overline{\tau}_2(\eta^{-1} \gamma \Lambda), 
\quad \textrm{for every $\gamma \Lambda \in \Gamma/\Lambda$}.
\]
In particular, $(\Prob(\Gamma ; \Lambda),*)$ is a monoid, where $\delta_e$ is the identity element. \vspace{0.2cm}

\item[\textsc{(ii)}] If $\Gamma$ acts by linear isometries on a Banach space $E$ and the dual action  preserves a weak*-closed convex
subset $C$ of $E^*$, then 
\[
\Prob(\Gamma ; \Lambda) * C^\Lambda \subseteq C^\Lambda,
\]
where $C^\Lambda$ denotes the (possibly empty) set of $\Lambda$-invariant points in $C$. In particular, if $C$ is weak*-compact
and $\Lambda$ is amenable, then for every $\tau \in \Prob(\Gamma ; \Lambda)$, there exists $c \in C^\Lambda$ such that  $\tau * c = c$. \vspace{0.2cm}

\item[\textsc{(iii)}] If $(\Gamma,\Lambda)$ is a Hecke pair, then for every subset $S \subset \Gamma$, there exist 
\vspace{0.1cm}
\begin{enumerate}
\item[(i)] a subset $\widetilde{S} \subset \Gamma$, \vspace{0.1cm}

\item[(ii)] an affine map $\Prob(S) \ra \Prob(\Gamma ; \Lambda) \cap \Prob(\widetilde{S}), \enskip \tau \mapsto \widetilde{\tau}$,
\end{enumerate} 
\vspace{0.1cm}
such that $\supp(\tau) \subseteq \supp(\widetilde{\tau})$. If $S$ is finite, then $\widetilde{S}$ can be chosen finite. In particular, if $\tau$
is spread-out, then so is $\widetilde{\tau}$.
\end{enumerate}
\end{theorem}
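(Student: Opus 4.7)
I would prove the three claims in order, since they are largely independent.

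For \textsc{(i)}, the plan is a direct expansion. Starting from $\alpha_*(\tau_1 * \tau_2)(\gamma\Lambda) = \sum_{\lambda \in \Lambda}\sum_{\eta \in \Gamma}\tau_1(\eta)\tau_2(\eta^{-1}\gamma\lambda)$, I would split the $\eta$-sum over cosets by writing $\eta = \eta_0\mu$ with $\mu \in \Lambda$. The inner sum over $\lambda$ produces $\overline{\tau}_2(\mu^{-1}\eta_0^{-1}\gamma\Lambda)$, which collapses to $\overline{\tau}_2(\eta_0^{-1}\gamma\Lambda)$ by the $\Lambda$-invariance of $\overline{\tau}_2$ arising from $\tau_2 \in \Prob(\Gamma;\Lambda)$; the remaining $\mu$-sum then produces $\overline{\tau}_1(\eta_0\Lambda)$, giving the stated formula. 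To check that $\Prob(\Gamma;\Lambda)$ is closed under convolution, I would substitute $\eta_0 \mapsto \nu\eta_0$ in the formula for $\alpha_*(\tau_1 * \tau_2)(\nu\gamma\Lambda)$ with $\nu \in \Lambda$ and invoke $\Lambda$-invariance of $\overline{\tau}_1$ this time. Associativity and the identity property of $\delta_e$ are inherited from convolution of measures on $\Gamma$.

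For \textsc{(ii)}, given $c \in C^\Lambda$ and $\tau \in \Prob(\Gamma;\Lambda)$, I would compute $\lambda \cdot (\tau * c) = \sum_\gamma \tau(\gamma)(\lambda\gamma) \cdot c$ for $\lambda \in \Lambda$, decompose $\gamma = \gamma_0 \mu$ over cosets of $\Lambda$, and use $\Lambda$-invariance of $c$ to factor each term as a scalar times $\gamma_0 \cdot c$. Summing over $\mu$ produces the coefficient $\overline{\tau}(\lambda^{-1}\gamma_0\Lambda) = \overline{\tau}(\gamma_0\Lambda)$, so the result matches $\tau * c$, proving $\tau * c \in C^\Lambda$. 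For the "in particular" clause, amenability of $\Lambda$ ensures (via Day's fixed point theorem) that $C^\Lambda$ is a nonempty weak*-compact convex subset of $E^*$, and the continuous affine self-map $c \mapsto \tau * c$ on $C^\Lambda$ then has a fixed point by Markov--Kakutani.

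For \textsc{(iii)}, I would exploit the Hecke condition directly. For each $\gamma \in \Gamma$, the $\Lambda$-orbit $\Lambda \cdot (\gamma\Lambda) \subset \Gamma/\Lambda$ is finite, say realized by coset representatives $\gamma = \gamma^{(1)}, \gamma^{(2)}, \ldots, \gamma^{(n_\gamma)} \in \Gamma$. I would set $\widetilde{\delta}_\gamma := \frac{1}{n_\gamma}\sum_{i=1}^{n_\gamma} \delta_{\gamma^{(i)}}$; its image under $\alpha_*$ is the uniform probability measure on a $\Lambda$-orbit in $\Gamma/\Lambda$, hence $\Lambda$-invariant, so $\widetilde{\delta}_\gamma \in \Prob(\Gamma;\Lambda)$. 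Extending linearly, $\widetilde{\tau} := \sum_\gamma \tau(\gamma)\widetilde{\delta}_\gamma$ defines an affine map $\Prob(S) \ra \Prob(\Gamma;\Lambda) \cap \Prob(\widetilde{S})$ with $\widetilde{S} := \bigcup_{\gamma \in S}\{\gamma^{(1)},\ldots,\gamma^{(n_\gamma)}\}$. Since $\gamma^{(1)} = \gamma$, the inclusion $\supp(\tau) \subseteq \supp(\widetilde{\tau})$ is immediate, and finiteness of $\widetilde{S}$ when $S$ is finite follows from finiteness of each $\Lambda$-orbit. The spread-out clause is then automatic because on a discrete group it only requires the support to generate $\Gamma$ as a semigroup, a property preserved under enlargement of support.

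There is no serious technical obstacle; the calculations are routine bookkeeping. The one point that requires care is in \textsc{(i)}, where the two invariance properties of $\overline{\tau}_1$ and $\overline{\tau}_2$ play different roles — one to derive the convolution formula, the other to confirm that $\tau_1 * \tau_2$ is itself absorbing. Conceptually the most substantive step is \textsc{(iii)}, whose whole content is the observation that the Hecke property is exactly what allows one to average a point mass $\delta_\gamma$ over its finite $\Lambda$-orbit to produce a $\Lambda$-absorbing measure with only a controlled enlargement of support.
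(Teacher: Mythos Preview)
Your proofs of \textsc{(i)} and \textsc{(ii)} are correct and essentially identical to the paper's: the same coset decomposition, the same use of the $\Lambda$-invariance of $\overline\tau_2$ to obtain the convolution formula and of $\overline\tau_1$ to verify that $\tau_1*\tau_2$ is again absorbing, and the same reduction of $\tau*c$ to a sum over $\Gamma/\Lambda$. The paper does not spell out the fixed-point step; your appeal to Day and Markov--Kakutani is a fine way to fill it in.

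For \textsc{(iii)} your argument is correct but genuinely different from the paper's. The paper proceeds in three stages: push $\tau$ down to $\overline\tau\in\Prob(\Gamma/\Lambda)$, average $\overline\tau$ over $\Lambda$-orbits to get $\widehat\tau\in\Prob(\Gamma/\Lambda)^\Lambda$, and then lift $\widehat\tau$ back to $\Gamma$ via a fixed section $\beta:\Gamma/\Lambda\to\Gamma$ together with an auxiliary probability measure $r$ on $\Lambda$, setting $\widetilde\tau(\gamma)=\widehat\tau(\gamma\Lambda)\,r(\lambda_\gamma)$ where $\gamma=\beta(\gamma\Lambda)\lambda_\gamma$. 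Your construction skips the section and the auxiliary $r$ entirely: you average each $\delta_\gamma$ directly over representatives of the $\Lambda$-orbit of $\gamma\Lambda$, insisting that one representative be $\gamma$ itself. This is shorter and makes $\supp\tau\subseteq\supp\widetilde\tau$ immediate, whereas the paper has to choose $\supp r=S_\Lambda:=\{\lambda_\gamma:\gamma\in S\}$ to recover that inclusion. The paper's route buys extra flexibility in how the mass is distributed within each coset (through the choice of $r$), which can be useful elsewhere, but for the statement as written your orbit-averaging is perfectly adequate and arguably more transparent.
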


The main point of \textsc{(III)} is to show that if $\Gamma$ is finitely generated and $(\Gamma,\Lambda)$ is a Hecke pair, then finitely supported, spread-out and $\Lambda$-absorbing measures exist in abundance. 


\subsection{Hecke measured groups and their completions}

Hecke pairs can be constructed along the following lines. Given \vspace{0.1cm}
\begin{itemize}
\item a countable discrete group $\Gamma$, \vspace{0.1cm}
\item a lcsc \emph{totally disconnected group} $H$ and a compact open subgroup $L < H$,  \vspace{0.1cm}
\item a homomorphism $\rho : \Gamma \ra H$ with dense image, \vspace{0.1cm}
\end{itemize}
we set $\Lambda := \rho^{-1}(L) < \Gamma$, and note that 
\[
\Lambda \cap \gamma \Lambda \gamma^{-1} = \rho^{-1}(L \cap \rho(\gamma)L\rho(\gamma)^{-1}), \quad \textrm{for all $\gamma \in \Gamma$}.
\]
Since $L$ is compact and open, the intersection $L \cap \rho(\gamma)L\rho(\gamma)^{-1}$ is a compact and open subgroup of $L$, and has thus 
finite index in $L$ for every $\gamma \in \Gamma$, whence $(\Gamma,\Lambda)$ is a Hecke pair. We say that $(H,L,\rho)$ is a \emph{completion triple} of the Hecke pair $(\Gamma,\Lambda)$. Conversely, Tzanev \cite{Tza} has proved (based on some ideas of Schlicting \cite{Sch}) that if $(\Gamma,\Lambda)$ is a Hecke pair, then there is always a completion triple $(H,L,\rho)$ of $(\Gamma,\Lambda)$. This completion triple is often referred to as the \emph{Schlicting (or relatively profinite) completion} of $(\Gamma,\Lambda)$ in the literature, and we shall discuss several examples below. \\

The key point of our first main result, and its corollaries, is that if $(\Gamma,\Lambda)$ is a Hecke pair and $(H,L,\rho)$ is a completion triple of $(\Gamma,\Lambda)$, then there is a natural affine map between spread-out and $\Lambda$-absorbing probability measures on $\Gamma$ and spread-out and bi-$L$-invariant probability measures on $H$, which
\begin{itemize}
\item respects convolutions, \vspace{0.1cm}
\item for every $\Lambda$-absorbing spread-out probability measure $\tau$ on $\Gamma$ produces a spread-out bi-$L$-invariant probability measure $\theta_\tau$  on $H$ such that $\tau$ is a Furstenberg discretization of $\theta_\tau$. 
\vspace{0.1cm}
\item induces a $\Gamma$-equivariant and measure-preserving map between the Poisson boundary of $(\Gamma,\tau)$ and the Poisson boundary
of $(H,\theta_\tau)$. In some cases, this map is a measurable $\Gamma$-isomorphism.
\end{itemize}

\begin{remark}
In his Ph.D.-thesis \cite[Subsection 7.2.1]{Cr}, Creutz constructs (in the setting described above) finitely supported Furstenberg discretizations on $\Gamma$ for \emph{certain} compactly supported bi-$L$-invariant probability measures on $H$. His construction is however quite different from ours, and does not seem to induce a natural map between the respective Poisson boundaries (this is also not needed for the applications that he had in mind).
\end{remark}

We denote by $\Prob(H,L)$ the space of bi-$L$-invariant probability measures on $H$, and note that this set is clearly closed under convolution and 
that the Haar probability measure $m_L$ on $L$ is the neutral element with respect to convolution. In particular, $(\Prob(H,L),*)$ is a monoid. Since $L$ is open in $H$, every bi-$L$-invariant measure on $H$ is automatically absolutely continuous with respect to the Haar measure class on $H$.

\begin{theorem}
\label{thm2}
Let $(\Gamma,\Lambda)$ be a Hecke pair and $(H,L,\rho)$ a completion triple of $(\Gamma,\Lambda)$. There exists an affine surjective monoid homomorphism
\[
(\Prob(\Gamma ; \Lambda),*) \longrightarrow (\Prob(H,L),*), \enskip \tau \mapsto \theta_\tau
\]
with the following properties: 
\vspace{0.1cm}
\begin{enumerate}
\item[\textsc{(P1)}] for every $\tau \in \Prob(\Gamma ; \Lambda)$ and right $L$-invariant $\varphi \in C(H) \cap \cL^1(H,\theta_\tau)$,
\[
\sum_{\gamma \in \Gamma} \varphi(\rho(\gamma)) \, \tau(\gamma) = \int_H \varphi(h) \, d\theta_\tau(h).
\]
In particular, for every $\theta_\tau$-integrable homomorphism $\varphi : H \ra \bR$, we have 
\[
\tau(\varphi \circ \rho) = \theta_\tau(\varphi).
\]
\item[\textsc{(P2)}] for every $\tau \in \Prob(\Gamma ; \Lambda)$,
\[
\supp \theta_\tau = L\rho(\supp \tau)L.
\]
In particular, if $\tau$ is spread-out, then so is $\theta_\tau$, and if $\tau$ has finite support, then $\theta_\tau$ has compact support.
\vspace{0.2cm}

\item[\textsc{(P3)}] for every Borel $H$-space $X$ and $L$-invariant Borel measure $\xi$ on $X$, 
\[
\tau * \xi = \theta_\tau * \xi, \quad \textrm{for all $\tau \in \Prob(\Gamma ; \Lambda)$},
\]
where $\Gamma$ acts on $X$ via $\rho$. In particular, every $(H,\theta_\tau)$-space is also a $(\Gamma,\tau)$-space. \vspace{0.2cm}

\item[\textsc{(P4)}] for every $\tau \in \Prob(\Gamma ; \Lambda)$ and Borel $(H,\theta_\tau)$-space $(X,\xi)$,
\[
\textrm{$\xi$ is $\theta_\tau$-proximal} \iff \textrm{$\xi$ is $\tau$-proximal}.
\]
\end{enumerate}
\end{theorem}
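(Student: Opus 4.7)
The plan is to build $\theta_\tau$ from a natural bijection provided by the completion triple, and then verify the properties by transfer arguments. Observe first that $\rho$ descends to a bijection of $H$-sets $\Gamma/\Lambda \xrightarrow{\sim} H/L$, $\gamma\Lambda \mapsto \rho(\gamma)L$: injectivity uses $\Lambda = \rho^{-1}(L)$, surjectivity follows from the openness of $L$ together with density of $\rho(\Gamma)$, which force $H = \rho(\Gamma)L$. Density of $\rho(\Lambda)$ in $L$ (again from openness of $L$) then shows that $L$-orbits on $H/L$ coincide with $\Lambda$-orbits on $\Gamma/\Lambda$, so $\Lambda$-invariant probability measures on $\Gamma/\Lambda$ correspond bijectively to $L$-invariant probability measures on $H/L$, which in turn lift uniquely to bi-$L$-invariant probability measures on $H$. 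I would define $\theta_\tau$ to be the bi-$L$-invariant lift of $\alpha_*\tau$, explicitly
\[
\theta_\tau = \sum_{\gamma\Lambda \in \Gamma/\Lambda} \overline{\tau}(\gamma\Lambda)\, \delta_{\rho(\gamma)} * m_L.
\]
Property (P1) is then immediate for right-$L$-invariant $\varphi$, and the corollary for homomorphisms follows because any continuous homomorphism $H \to \bR$ vanishes on the compact subgroup $L$ and is therefore bi-$L$-invariant.

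Next I would verify affineness (clear from the construction), surjectivity (lift any $\theta \in \Prob(H,L)$ to a measure on $\Gamma$ via a choice of coset representatives for $\Gamma/\Lambda$), (P2) (via the double-coset description of the support of the lift), and the monoid property. For the last, fix a right-$L$-invariant $\varphi$ and observe that $\widetilde{\varphi}(h_1) := \int \varphi(h_1 h_2)\, d\theta_{\tau_2}(h_2)$ is again right-$L$-invariant by the \emph{left}-$L$-invariance of $\theta_{\tau_2}$; two applications of (P1) then give
\[
\int \varphi\, d(\theta_{\tau_1} * \theta_{\tau_2}) = \sum_{\gamma_1, \gamma_2} \tau_1(\gamma_1)\tau_2(\gamma_2)\, \varphi(\rho(\gamma_1\gamma_2)) = \int \varphi\, d\theta_{\tau_1 * \tau_2},
\]
and bi-$L$-invariant measures on $H$ are determined by their integrals against right-$L$-invariant test functions. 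Property (P3) is similar: if $\xi$ is $L$-invariant then $F(h) := \int_X f(hx)\, d\xi(x)$ is right-$L$-invariant for any bounded Borel $f$, and (P1) applied to $F$ yields $\theta_\tau * \xi = \tau * \xi$. The \emph{in particular} clause requires noting that any $\theta_\tau$-stationary $\xi$ is automatically $L$-invariant: $l\xi = l(\theta_\tau * \xi) = \theta_\tau * \xi = \xi$ since $l * \theta_\tau = \theta_\tau$ for $l \in L$.

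For (P4) I would exploit this forced $L$-invariance of $\xi$ together with $H = \rho(\Gamma)L$ to conclude that $h\xi = \rho(\gamma_h)\xi$ for any $\gamma_h \in \Gamma$ with $h \in \rho(\gamma_h)L$; in particular, $h\xi$ depends only on the coset $hL \in H/L$. Consequently, if $(h_n)$ are i.i.d.\ $\theta_\tau$, the measure $h_1\cdots h_n\xi$ depends only on the coset $h_1 \cdots h_n L$. The projected process $(h_1\cdots h_n L)_{n\geq 1}$ is a Markov chain on $H/L$, and thanks to left-$L$-invariance of $\theta_\tau$ its transitions correspond under the coset bijection exactly to those of the projection to $\Gamma/\Lambda$ of the $\tau$-walk $(\gamma_1\cdots\gamma_n)_{n \geq 1}$ on $\Gamma$. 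Therefore the measure-valued processes $(h_1\cdots h_n\xi)_n$ and $(\rho(\gamma_1\cdots\gamma_n)\xi)_n$ have the same joint distribution, hence by the martingale convergence theorem so do their a.s.\ weak-$*$ limits; since being a Dirac mass is a Borel property of a probability measure, this gives $\theta_\tau$-proximality of $\xi$ if and only if $\tau$-proximality of $\xi$. The main obstacle is precisely this last step: it hinges on a delicate interplay between the forced $L$-invariance of $\xi$ (which lets us replace the $\theta_\tau$-walk on $H$ by its projection to $H/L$ when acting on $\xi$) and the left-$L$-invariance of $\theta_\tau$ (which ensures this projected walk is a genuine Markov chain whose law matches the projection of the $\tau$-walk). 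Everything else reduces to the coset bijection and careful bookkeeping around (P1).
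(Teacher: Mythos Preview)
Your construction of $\theta_\tau$ and the verifications of affineness, surjectivity, (P1)--(P3), and the monoid property are essentially the same as the paper's (your monoid argument via two applications of (P1) is a minor rearrangement of the paper's direct computation using the formula $\alpha_*(\tau_1*\tau_2)(\gamma\Lambda)=\sum_{\eta\Lambda}\overline{\tau}_1(\eta\Lambda)\overline{\tau}_2(\eta^{-1}\gamma\Lambda)$ from Theorem~\ref{thm1}\textsc{(i)}).

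The genuine divergence is in (P4). You argue probabilistically: the $L$-invariance of $\xi$ makes $h\xi$ a function of $hL$, left-$L$-invariance of $\theta_\tau$ makes $(h_1\cdots h_n L)_n$ a Markov chain on $H/L$, and under $\overline{\rho}$ this chain has the same law as the projection $(\gamma_1\cdots\gamma_n\Lambda)_n$ of the $\tau$-walk (here the absorbing condition is exactly what makes the latter Markov). Hence the $\Prob(M)$-valued martingales $(h_1\cdots h_n\xi)_n$ and $(\rho(\gamma_1\cdots\gamma_n)\xi)_n$ have the same law, and so do their a.s.\ weak-$*$ limits; since ``being Dirac'' is Borel, proximality transfers. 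This is correct and self-contained. The paper instead develops a deterministic criterion: $\eta$ is $\mu$-proximal iff $\mu^{*n}*\eta^{\otimes 4}\to\eta_{\Delta_4}$ (Lemma~\ref{lemma_proxtransfer}\textsc{(iii)}, which the authors note appears to be new), and then observes via (P3) and the monoid property that $\tau^{*n}*\xi^{\otimes 4}=\theta_\tau^{*n}*\xi^{\otimes 4}$ for all $n$. The paper's route yields a reusable tool (Proposition~\ref{prop_proxcrit}) used elsewhere in the paper, and the authors explicitly remark (Remark~\ref{remark_prox}) that the ``obvious'' approach via the $2$-coupling characterization has a subtlety---a $\tau$-stationary coupling need not be $\theta_\tau$-stationary---which forced them to the $4$-fold criterion. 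Your pathwise coupling sidesteps this issue entirely and is arguably more elementary, at the cost of not producing the general transfer principle the paper extracts.
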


\begin{remark}
Property \textsc{(P2)} tells us that spread-out measures are mapped to spread-out measures, so in combination with Property \textsc{(P3)} we can conclude that $\theta_\tau$ is a Furstenberg discretization of $\tau$ whenever $\tau$ is spread-out. We stress that we do \emph{not} claim that every compactly supported measure in $\Prob(H,L)$ must necessarily be the image of a finitely supported probability measure on $\Gamma$. Property \textsc{(P4)} implies that the Poisson boundary of $(H,\theta_\tau)$, viewed as a $(\Gamma,\tau)$-space, is a $\tau$-boundary, and thus provides an answer to \textsc{(A2)} in our setting. 
\end{remark}

\begin{definition}[Hecke measured group and its Hecke completion]
Let $(\Gamma,\Lambda)$ be a Hecke pair, and $(H,L,\rho)$ a completion triple of $(\Gamma,\Lambda)$. If $\tau$ is a spread-out and $\Lambda$-absorbing probability measure on $\Gamma$, we say that $(\Gamma,\tau)$ is a
\emph{Hecke measured group}, and refer to $(H,\theta_\tau)$ as the \emph{Hecke completion of $(\Gamma,\tau)$ with respect to $\rho$}.
\end{definition}

Let us now discuss some functorial properties of the map $\tau \mapsto \theta_\tau$ in Theorem \ref{thm2}, restricted to spread-out measures. 
The following corollary will be proved in Section \ref{sec:proofCor1Thm2}, and we retain the notation from Theorem \ref{thm2}. 
We refer to Section \ref{sec:prelcpt} and Section \ref{sec:prelBorel} for definitions. 

\begin{corollary}
\label{cor1_Thm2}
Let $\tau$ be a spread-out measure in $\Prob(\Gamma ; \Lambda)$, and denote by
$(B_\tau,\nu_\tau)$ and $(B_{\theta_\tau},\nu_{\theta_\tau})$ the Poisson boundaries of $(\Gamma,\tau)$ and $(H,\theta_\tau)$
respectively, where $\Gamma$ acts on $B_{\theta_\tau}$ via $\rho$. \vspace{0.1cm}
\begin{enumerate}
\item[\textsc{(i)}] $(B_{\theta_\tau},\nu_{\theta_\tau})$ is a $\tau$-boundary. \vspace{0.3cm}

\item[\textsc{(ii)}] If $M$ is a compact $\Gamma$-space, and $\eta$ is a $\Lambda$-invariant and $\tau$-stationary probability measure on $M$,
then there is a measure-preserving $\Gamma$-map 
\[
(B_{\theta_\tau},\nu_{\theta_\tau}) \ra (\Prob(M),\eta^*),
\]
where $(\Prob(M),\eta^*)$ denotes the canonical quasi-factor of $(M,\eta)$.
In particular, every $\tau$-proximal and $\Lambda$-invariant Borel $(\Gamma,\tau)$-space is measurably $\Gamma$-isomorphic to a $\theta_\tau$-boundary, viewed as a $(\Gamma,\tau)$-space. \vspace{0.3cm}

\item[\textsc{(iii)}] If $(B_{\theta_\tau},\nu_{\theta_\tau})$ is measurably $\Gamma$-isomorphic to $(B_\tau,\nu_\tau)$, then $\Lambda$ is amenable. \vspace{0.3cm}

\item[\textsc{(iv)}] If $\Lambda$ is amenable and $(B_\tau,\nu_\tau)$ admits a uniquely $\tau$-stationary compact model, 
then $(B_\tau,\nu_\tau)$ is measurably $\Gamma$-isomorphic to $(B_{\theta_\tau},\nu_{\theta_\tau})$.\vspace{0.3cm}
\end{enumerate}
\end{corollary}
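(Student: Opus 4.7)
Statement (i) is immediate from Theorem \ref{thm2}: $(B_{\theta_\tau}, \nu_{\theta_\tau})$ is $\theta_\tau$-proximal by construction, is a $(\Gamma,\tau)$-space by (P3), and is $\tau$-proximal by (P4); being $\tau$-stationary, it is a $\tau$-boundary. For (ii)--(iv), the key common observation is that since $L$ is open in $H$ with $\rho(\Gamma)$ dense, one has $H = \rho(\Gamma) L$ and a canonical bijection $\Gamma/\Lambda \to H/L$; consequently every right-$\Lambda$-invariant function on $\Gamma$ lifts uniquely to a right-$L$-invariant function on $H$, and property (P1) combined with left-$L$-invariance of $\theta_\tau$ upgrades $\tau$-harmonicity to $\theta_\tau$-harmonicity under this lift.

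For (ii), I would define $\tilde\Phi \colon H \to \Prob(M)$ by $\tilde\Phi(\rho(\gamma)\ell) := \gamma \eta$, which is well-defined by $\Lambda$-invariance of $\eta$ and right-$L$-invariant by construction, and verify by pairing with continuous test functions on $M$ and using $\tau$-stationarity of $\eta$ that $\tilde\Phi$ is a bounded $\theta_\tau$-harmonic map into the compact convex set $\Prob(M)$. The Poisson representation for $(H,\theta_\tau)$ then yields a measurable $\Gamma$-equivariant $\pi \colon B_{\theta_\tau} \to \Prob(M)$ with $\tilde\Phi(h) = \int \pi(hb)\, d\nu_{\theta_\tau}(b)$. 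To identify $\pi_* \nu_{\theta_\tau}$ with $\eta^*$, I would observe that $\tilde\Phi(X_1 \cdots X_n)$ along the $\theta_\tau$-random walk depends only on the projected walk in $H/L = \Gamma/\Lambda$, whose distribution matches (via (P1)) that of a $\tau$-walk on $\Gamma$ modulo $\Lambda$; thus the martingale limit $\pi(b)$ has the same distribution as the $(\Gamma,\tau)$-boundary map applied to $\eta$, namely $\eta^*$. For the ``in particular'' clause, $\tau$-proximality of $\xi$ forces $\xi^*$ to be Dirac-supported, so $\pi$ factors through a $\Gamma$-surjection $\bar\pi \colon B_{\theta_\tau} \to Y$; the stabilizer in $H$ of its pulled-back $\sigma$-algebra is a closed subgroup containing $\rho(\Gamma)$ and hence all of $H$, endowing $(Y,\xi)$ with the structure of a $\theta_\tau$-boundary.

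For (iii), bi-$L$-invariance of $\theta_\tau$ makes $\nu_{\theta_\tau}$ an $L$-invariant probability measure on $B_{\theta_\tau}$; the assumed $\Gamma$-isomorphism transfers this to $\Lambda$-invariance of $\nu_\tau$. Since $(B_\tau,\nu_\tau)$ is an amenable $\Gamma$-action (Zimmer), its restriction to $\Lambda$ is an amenable $\Lambda$-action preserving the probability measure $\nu_\tau$; integrating a measurable $\Lambda$-equivariant field of means on $\ell^\infty(\Lambda)$ against $\nu_\tau$ then produces a $\Lambda$-invariant mean, proving that $\Lambda$ is amenable.

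For (iv), amenability of $\Lambda$ yields a $\Lambda$-fixed probability $\eta_0$ on the compact model $M$; Theorem \ref{thm1}(ii) ensures $\tau^{*n} * \eta_0$ remains $\Lambda$-invariant, so a weak*-limit of Cesaro averages produces a $\Lambda$-invariant $\tau$-stationary measure, which by unique stationarity must equal $\nu_\tau$. With $\nu_\tau$ now $\Lambda$-invariant, the Poisson transform $P[f](\gamma) = \int f(\gamma b)\, d\nu_\tau(b)$ of every $f \in L^\infty(B_\tau, \nu_\tau)$ is right-$\Lambda$-invariant, hence by the lifting principle arises from a bounded $\theta_\tau$-harmonic function on $H$, i.e.\ from an element of $L^\infty(B_{\theta_\tau}, \nu_{\theta_\tau})$. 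This supplies the inclusion inverse to the factor map from (i), so that map is a measurable $\Gamma$-isomorphism. The main obstacle is the distributional matching $\pi_*\nu_{\theta_\tau} = \eta^*$ in (ii), which hinges on the careful transport between $\theta_\tau$-walks on $H$ and $\tau$-walks on $\Gamma$ mediated by (P1) and the coset bijection $H/L = \Gamma/\Lambda$.
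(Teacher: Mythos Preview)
Your argument is essentially correct and follows the same architecture as the paper's: the coset bijection $\Gamma/\Lambda \to H/L$ combined with \textsc{(P1)} is exactly the mechanism the paper uses to transport right-$\Lambda$-invariant $\tau$-harmonic functions on $\Gamma$ to right-$L$-invariant $\theta_\tau$-harmonic functions on $H$ (Lemma~\ref{lemma_Texists}), and parts \textsc{(i)} and \textsc{(iii)} are identical to the paper's.

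Two points where your execution diverges from the paper are worth noting. For the identity $(\pi_\eta)_*\nu_{\theta_\tau}=\eta^*$ in \textsc{(ii)}, the paper does not match random-walk trajectories; instead it characterises $\eta^*$ by its ``moments'' $\eta^*(\widehat{f}_1\cdots\widehat{f}_k)=\lim_n(\tau^{*n}*\eta^{\otimes k})(f_1\otimes\cdots\otimes f_k)$ (Lemma~\ref{lemma_canonicalquasifactor}), rewrites each such limit via Lemma~\ref{lemma_Texists} and \textsc{(P1)} as $\lim_n(\theta_\tau^{*n}*\nu_{\theta_\tau}^{\otimes k})(T_\eta f_1\otimes\cdots\otimes T_\eta f_k)$, and then uses $\theta_\tau$-proximality of $\nu_{\theta_\tau}$ (Proposition~\ref{prop_proximalktuples}) to evaluate this as $\nu_{\theta_\tau}(T_\eta f_1\cdots T_\eta f_k)$. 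Your martingale-distribution argument reaches the same conclusion but is less self-contained; the moment computation avoids having to justify that the limiting distributions agree. For \textsc{(iv)}, the paper does not invert the Poisson transform directly: it applies the already-proved \textsc{(ii)} to the compact model $(M,\eta)\cong(B_\tau,\nu_\tau)$ to produce a measure-preserving $\Gamma$-map $(B_{\theta_\tau},\nu_{\theta_\tau})\to(B_\tau,\nu_\tau)$, composes it with the factor map from \textsc{(i)}, and invokes the rigidity fact \cite[Proposition 3.2]{FuGl} that any measure-preserving $\Gamma$-self-map of a Poisson boundary is the identity. Your harmonic-lift inversion also works, but you should make explicit the compatibility $P_\tau[\pi^*\psi]=P_{\theta_\tau}[\psi]\circ\rho$ needed to verify that the lift really inverts $\pi^*$; the paper's route sidesteps this check.
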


\subsection{Main lines of investigations}

In many situations, Theorem \ref{thm2} and Corollary \ref{cor1_Thm2} offer a possibility to study certain aspects of $(\Gamma,\tau)$-spaces 
through the lenses of $(H,\theta_\tau)$-spaces, which are often better behaved (the boundary theory of such measured groups has been systematically developed by Kaimanovich and Woess in \cite{KaWo}). 

The benefits of such a change of perspective are especially rewarding when the Poisson boundary of 
$(H,\theta_\tau)$ is essentially $H$-transitive, that is to say, modulo null sets, measurably $H$-isomorphic to a quotient space of the form $H/P$ for some \emph{closed} subgroup $P < H$, where $H/P$ is endowed with its unique $H$-invariant measure class. Our main applications (Theorem \ref{thm3} and
Theorem \ref{thm4}) indeed take place in this scenery (although this point of view is somewhat hidden in the arguments). 

Before
we describe these applications in detail, we will briefly provide some general background to the lines of investigations that we will pursue in this paper, and to the questions that we will answer.
\vspace{0.4cm}

\noindent \textbf{Existence of prime measured groups} \\

Let $(\Gamma,\tau)$ be a countable measured group and let $(X,\xi)$ be a non-trivial Borel $(\Gamma,\tau)$-space. We say that $(X,\xi)$ is \emph{prime} if every measure-preserving $\Gamma$-map from $(X,\xi)$ to another $(\Gamma,\tau)$-space is either a measurable $\Gamma$-isomorphism
or essentially trivial, and we say that a measured group $(\Gamma,\tau)$ is \emph{prime} if the associated Poisson boundary is  prime.

As far as we know, before this paper (more specifically, Theorem \ref{thm3} below), not a single example of a prime (countable) measured group 
was known. Furthermore, the only \emph{explicit} examples in the literature of $\tau$-proximal and prime Borel $(\Gamma,\tau)$-spaces that we could find 
(modulo small variations) were:
\vspace{0.1cm}
\begin{itemize}
\item Let $\Gamma$ be a lattice in a simple real Lie group $G$ of real rank at least two, and let $Q$ denote the \emph{maximal} parabolic 
subgroup of $G$. Let $\tau$ be a spread-out probability measure on $\Gamma$ such that the (unique) $\tau$-stationary Borel probability measure $\nu$
on $G/Q$  is absolutely continuous with respect to the unique $G$-invariant measure class (the existence of such a measure is guaranteed by Theorem \ref{thm0}). Margulis (see e.g. \cite[Theorem 8.1.4]{Zi}) has proved that $(G/Q,\nu)$ is a prime (non-maximal) $\tau$-boundary. This result is a key ingredient in the proof of Margulis' Normal Subgroup Theorem. \vspace{0.1cm}

\item If $\Gamma$ has Property (T), then Nevo \cite[Theorem 4.3]{N} has proved that for every (say, symmetric and finitely supported) spread-out probability measure $\tau$ on $\Gamma$, the
measured group $(\Gamma,\tau)$ always admits (at least one) prime $\tau$-boundary. \vspace{0.1cm}
\end{itemize}

Using Corollary \ref{cor1_Thm2}, we can provide (see Section \ref{sec:cor2thm2}) a new class of examples of $\tau$-proximal and prime $(\Gamma,\tau)$-spaces when $\Gamma$ is a free group of finite rank. 

\begin{corollary}
\label{cor2_thm2}
There exist an integer $r \geq 2$ and a finitely supported spread-out probability measure $\tau$ on a free group $\Gamma$ of rank $r$ such
that the measured group $(\Gamma,\tau)$ admits a prime and essentially free $\tau$-boundary.
\end{corollary}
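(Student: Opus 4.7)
The plan is to realize $\Gamma$ as a dense subgroup of a rank-one $p$-adic simple group via a Hecke pair, and then to transfer the (well-understood) primeness of the completion boundary back to $\Gamma$ using Theorem \ref{thm2} and Corollary \ref{cor1_Thm2}.

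\textbf{Setup.} Fix a prime $p$, set $H = \PGL_2(\bQ_p)$ and $L = \PGL_2(\bZ_p)$, and choose a pair of elements of $H$ generating a dense free subgroup $\Gamma$ of rank $2$; such pairs exist by a standard Baire-category argument in non-discrete totally disconnected simple groups. Let $\rho : \Gamma \hookrightarrow H$ be the inclusion and set $\Lambda := \rho^{-1}(L)$, so $(\Gamma,\Lambda)$ is a Hecke pair with completion triple $(H,L,\rho)$. Invoking Theorem \ref{thm1}(iii), pick a finitely supported, spread-out, $\Lambda$-absorbing measure $\tau$ on $\Gamma$ with $\supp \tau \not\subseteq \Lambda$. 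Theorem \ref{thm2} then produces a bi-$L$-invariant, spread-out measure $\theta_\tau$ on $H$ with $\supp \theta_\tau \not\subseteq L$. By the Kaimanovich--Woess theory for bi-$L$-invariant random walks on rank-one $p$-adic groups, the Poisson boundary $(B_{\theta_\tau},\nu_{\theta_\tau})$ is $H$-isomorphic to $(H/P,\nu)$, where $P$ is a Borel subgroup of $H$ and $\nu$ is $L$-invariant in the unique $H$-quasi-invariant measure class on $H/P = \bP^1(\bQ_p)$. By Corollary \ref{cor1_Thm2}(i), $(H/P,\nu)$ is a $\tau$-boundary of $(\Gamma,\tau)$.

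\textbf{Primeness.} Let $\pi : (H/P,\nu) \to (Y,\eta)$ be a non-essentially-trivial measure-preserving $\Gamma$-factor map. Since $\tau$-proximality is inherited by factors and $\eta$ is $\Lambda$-invariant (because $\nu$ is $L$-invariant and $\rho(\Lambda) \subseteq L$), Corollary \ref{cor1_Thm2}(ii) provides a $\Gamma$-isomorphism $\phi : (Y,\eta) \to (H/Q,\nu_Q)$ for some closed subgroup $Q$ with $P \subseteq Q \subseteq H$. The Bruhat decomposition $H = P \sqcup PwP$ in $\PGL_2$ forces $Q \in \{P,H\}$; the case $Q = H$ is excluded since $Y$ is non-trivial. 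Hence the composition $\psi := \phi \circ \pi$ is a measure-preserving $\Gamma$-endomorphism of $(H/P,\nu)$. Since $\Gamma$ is dense in $H$ and $\nu$ is $H$-quasi-invariant on the smooth $H$-space $H/P$, the standard density-extension principle forces $\psi$ to be $H$-equivariant almost everywhere. Because $P$ is self-normalizing in $\PGL_2$, the only $P$-fixed point in $H/P$ is $eP$, so the only $H$-equivariant self-map of $H/P$ is the identity. Thus $\psi = \mathrm{id}$ a.e., and $\pi = \phi^{-1}$ is a measurable $\Gamma$-isomorphism.

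\textbf{Essential freeness.} Any non-identity element of $\PGL_2(\bQ_p)$ fixes at most two points of $\bP^1(\bQ_p)$, so the set of $x \in H/P$ with non-trivial $\Gamma$-stabilizer is a countable union of finite sets. Since $\nu$ lies in the $H$-quasi-invariant measure class on $\bP^1(\bQ_p)$ and is therefore non-atomic, this set is $\nu$-null, whence the $\Gamma$-action on $(H/P,\nu)$ is essentially free. The principal technical obstacle is the primeness step, and more precisely the density-extension reduction from a $\Gamma$-equivariant measurable endomorphism of $(H/P,\nu)$ to an $H$-equivariant one; once this reduction is carried out, the self-normalization of $P$ in $\PGL_2(\bQ_p)$ closes the argument and upgrades the abstract $\Gamma$-isomorphism of Corollary \ref{cor1_Thm2}(ii) into the statement that $\pi$ itself is an isomorphism.
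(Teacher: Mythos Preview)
Your proof is correct, but the primeness argument takes a detour relative to the paper. The paper has a ready-made tool, Corollary~\ref{cor_densesubgroupfactorhomo}: any $\Gamma$-factor of $(H/P,\nu)$ (with $\nu$ in the $H$-quasi-invariant class and $\Gamma$ dense in $H$) is already $\Gamma$-isomorphic to some $(H/Q,\nu')$ with $P\subseteq Q$ closed, so maximality of $P$ (via the Bruhat decomposition) gives primeness in one line. You instead invoke Corollary~\ref{cor1_Thm2}\textsc{(ii)} to identify the factor $(Y,\eta)$ with a $\theta_\tau$-boundary, then use the transitive $H$-space structure to recognize it as $H/Q$, and finally argue that the resulting $\Gamma$-endomorphism $\psi=\phi\circ\pi$ of $(H/P,\nu)$ extends to an $H$-equivariant map by density and hence equals the identity since $P$ is self-normalizing. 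This is valid---the density-extension step is precisely the Mackey point-realization argument underlying Proposition~\ref{prop_dense}---but it re-derives a special case of Corollary~\ref{cor_densesubgroupfactorhomo} by hand, and the self-normalization of $P$ is an extra ingredient the paper's route does not need.

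Two minor differences are worth noting. First, the paper works with $\SL_2(\bQ_p)$ and cites Breuillard--Gelander (or Ihara) for the dense free subgroup, rather than a Baire-category argument in $\PGL_2$; both are fine. Second, your essential-freeness argument is actually cleaner than the paper's: you observe directly that any nontrivial element of $\PGL_2(\bQ_p)$ fixes at most two points of $\bP^1(\bQ_p)$, whereas the paper argues that stabilizers are amenable subgroups of the free group $\Gamma$, of which there are only countably many, and then invokes \cite[Proposition~1.3]{BoHaTa} to conclude the stabilizer map is essentially constant (hence trivial).
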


\begin{remark}


In \cite{Bou}, Bourgain constructs a dense free subgroup $\Gamma$ of finite rank in $\SU(1,1)$ such that the Lebesgue measure $\nu$ on the boundary 
$\partial \bD$ of the unit disc $\bD$ in the complex plane is the unique $\tau$-stationary probability on $\partial \bD$, where $\tau$ denotes the uniform probability measure $\tau$ on a set of free generators of $\Gamma$. It follows from quite general principles that $(\partial \bD, \nu)$ is a $\tau$-boundary,
and its primeness and essential freeness can be established along the same lines as in the proof of Corollary \ref{cor2_thm2}. Similar constructions can also
be found in \cite{BPS} and \cite{BQ}.
\end{remark}

Furthermore, Corollary \ref{cor1_Thm2} also provides the following criterion for when a measured group is \emph{not} prime (see Section \ref{sec:cor3thm2}).

\begin{corollary}
\label{cor3_thm2}
Let $(\Gamma,\Lambda)$ be a  Hecke pair, and suppose that $\Lambda$ is neither amenable nor co-amenable in $\Gamma$. 
Then, for every spread-out and $\Lambda$-absorbing probability measure on $\Gamma$, the measured group $(\Gamma,\tau)$ is  \emph{not} prime.
\end{corollary}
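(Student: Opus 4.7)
The plan is to exhibit the Poisson boundary $(B_{\theta_\tau},\nu_{\theta_\tau})$ of the Hecke completion $(H,\theta_\tau)$, regarded as a $\Gamma$-space via $\rho$, as a proper non-trivial $\Gamma$-factor of the Poisson boundary $(B_\tau,\nu_\tau)$ of $(\Gamma,\tau)$. Fix a completion triple $(H,L,\rho)$ of $(\Gamma,\Lambda)$ and let $\theta_\tau \in \Prob(H,L)$ be the bi-$L$-invariant probability measure on $H$ associated to $\tau$ via Theorem \ref{thm2}; it is spread-out by Property \textsc{(P2)}. By Corollary \ref{cor1_Thm2}\textsc{(i)}, $(B_{\theta_\tau},\nu_{\theta_\tau})$ is a $\tau$-boundary, so comes with a measure-preserving $\Gamma$-factor map $\pi\colon (B_\tau,\nu_\tau) \to (B_{\theta_\tau},\nu_{\theta_\tau})$. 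To prove that $(\Gamma,\tau)$ is not prime it therefore suffices to verify that $\pi$ is neither essentially trivial nor a measurable $\Gamma$-isomorphism.

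The non-isomorphism part is immediate from the hypothesis that $\Lambda$ is not amenable: by Corollary \ref{cor1_Thm2}\textsc{(iii)}, an isomorphism $(B_\tau,\nu_\tau) \cong (B_{\theta_\tau},\nu_{\theta_\tau})$ of $\Gamma$-spaces would force $\Lambda$ to be amenable, contrary to our hypothesis.

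For non-triviality of $\pi$ I would argue by contradiction. Assume $(B_{\theta_\tau},\nu_{\theta_\tau})$ is essentially trivial, i.e.\ $(H,\theta_\tau)$ is Liouville. Since $\theta_\tau$ is a spread-out probability measure on the lcsc group $H$, the classical ``Liouville $\Rightarrow$ amenable'' implication for spread-out random walks (Furstenberg, Rosenblatt) forces $H$ to be amenable. Amenability of $H$ produces an $H$-invariant mean on $\ell^\infty(H/L)$, for example by pulling back an invariant mean on $\ell^\infty(H)$ along the $H$-equivariant embedding $f \mapsto f\circ p$ where $p\colon H \to H/L$ is the quotient map. Moreover $\gamma\Lambda \mapsto \rho(\gamma)L$ is a $\Gamma$-equivariant bijection $\Gamma/\Lambda \to H/L$ (well-defined since $\Lambda = \rho^{-1}(L)$, and surjective because $\rho(\Gamma)L = H$, using openness of $L$ and density of $\rho(\Gamma)$), so transferring the mean yields a $\Gamma$-invariant mean on $\ell^\infty(\Gamma/\Lambda)$, contradicting the assumption that $\Lambda$ is not co-amenable in $\Gamma$. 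The only step above that is not pure bookkeeping from Theorem \ref{thm2} and Corollary \ref{cor1_Thm2} is the correct invocation of the spread-out Liouville-implies-amenable theorem in the totally disconnected lcsc setting, which I expect to be the main technical point to pin down carefully.
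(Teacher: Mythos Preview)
Your proof is correct and follows essentially the same strategy as the paper's: exhibit $(B_{\theta_\tau},\nu_{\theta_\tau})$ as an intermediate $\tau$-boundary, use Corollary~\ref{cor1_Thm2}\textsc{(iii)} together with non-amenability of $\Lambda$ to rule out the isomorphism case, and use non-co-amenability of $\Lambda$ to rule out triviality.

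The only difference is in how the non-triviality of $(B_{\theta_\tau},\nu_{\theta_\tau})$ is established. The paper cites \cite[Proposition 3.4]{AD} to \emph{choose} a completion triple $(H,L,\rho)$ with $H$ non-amenable, and then invokes the non-triviality of the Poisson boundary of a non-amenable group. You instead fix an arbitrary completion triple and run the contrapositive: trivial Poisson boundary $\Rightarrow$ $H$ amenable $\Rightarrow$ $\Lambda$ co-amenable, supplying the last implication by a direct mean-transfer along the $\Gamma$-equivariant bijection $\Gamma/\Lambda \cong H/L$. Your route is slightly more self-contained (it reproves the relevant piece of \cite{AD} in a line) and in fact shows the stronger statement that \emph{every} completion triple of $(\Gamma,\Lambda)$ has $H$ non-amenable when $\Lambda$ is not co-amenable. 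Your residual worry about ``Liouville $\Rightarrow$ amenable'' in the totally disconnected lcsc setting is unwarranted: this is standard for any lcsc group carrying a spread-out measure with trivial Poisson boundary (see e.g.\ Furstenberg \cite{Fur3} or \cite{KV}), and is exactly what the paper is implicitly using when it says that non-amenability of $H$ forces $(B_{\theta_\tau},\nu_{\theta_\tau})$ to be non-trivial.
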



\vspace{0.2cm}

\noindent\textbf{$L^p$-irreducibility of boundary representations}\\

Let $(\Gamma,\tau)$ be a measured group and let $(X,\xi)$ be a Borel $(\Gamma,\tau)$-space. For $p \in [1,\infty)$, we denote by $\cO(L^p(X,\xi))$ the
group of orthogonal transformations on $L^p(X,\xi)$, and we define the map $\sigma_p : \Gamma \ra \cO(L^p(X,\xi))$ by
\[
\sigma_p(\gamma)f = \Big( \frac{d\gamma \xi}{d\xi} \Big)^{1/p} f(\gamma^{-1} \cdot), \quad \textrm{for $\gamma \in \Gamma$ and $f \in L^p(X,\xi)$}.
\]
One readily checks that $\sigma_p$ is a homomorphism. We say that $(X,\xi)$ is \emph{$L^p$-irreducible} 
if for every non-zero element $f \in L^p(X,\xi)$, the linear span of the set $\{ \sigma_p(\gamma)f \, \mid \, \gamma \in \Gamma \big\}$ is norm-dense in $L^p(X,\xi)$. We can of course extend the definition of the $L^p$-representation $\sigma_p$ to $p = \infty$ by setting $\sigma_\infty(\gamma)f = f \circ \gamma^{-1}$ 
for $\gamma \in \Gamma$ and $f \in L^\infty(X,\xi)$. However, since $\Gamma$ is countable and $L^\infty(X,\xi)$ is non-separable in the norm topology 
(if the support of $\xi$ is infinite), (infinite) Borel $(\Gamma,\tau)$-spaces are never $L^\infty$-irreducible. \\

In \cite{BM}, Bader and Muchnik formulated the following influential conjecture:

\begin{conjecture}
The Poisson boundary of a measured group is $L^2$-irreducible.
\end{conjecture}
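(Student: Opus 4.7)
Since the statement is presented as a conjecture and the abstract announces that the paper refutes it, my plan is to sketch a counterexample rather than a proof. The approach is to use Theorem \ref{thm2} and Corollary \ref{cor1_Thm2} to produce a Hecke measured group $(\Gamma,\tau)$ whose Poisson boundary $(B_\tau,\nu_\tau)$ is prime yet whose quasi-regular representation $\sigma_2$ on $L^2(B_\tau,\nu_\tau)$ admits a proper closed invariant subspace. These two conditions are not directly comparable: primeness rules out only non-trivial measurable $\Gamma$-factors, whereas $L^2$-irreducibility is sensitive to the Radon-Nikodym cocycle appearing in $\sigma_2$, so closed $\sigma_2$-invariant subspaces need not be of the form $L^2(Y)$ for a measurable $\Gamma$-factor $Y$.

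The plan is as follows. First, choose a Hecke pair $(\Gamma,\Lambda)$ whose Schlichting completion $(H,L,\rho)$ is explicitly understood; in view of the abstract, $\Gamma$ will be a lamplighter group or a solvable Baumslag-Solitar group, whose completions are solvable locally compact groups accessible to the Kaimanovich-Woess boundary theory. Use Theorem \ref{thm1}(iii) to produce a finitely supported, spread-out, $\Lambda$-absorbing measure $\tau \in \Prob(\Gamma;\Lambda)$, and pass to its Hecke completion $(H,\theta_\tau)$ via Theorem \ref{thm2}. Arrange $\theta_\tau$ so that the Poisson boundary $(B_{\theta_\tau},\nu_{\theta_\tau})$ is essentially $H$-transitive, i.e.\ measurably $H$-isomorphic to $H/P$ for some closed subgroup $P<H$; Corollary \ref{cor1_Thm2}(i) then identifies $(H/P,\nu_{\theta_\tau})$ with a $\tau$-boundary of $(\Gamma,\tau)$. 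For primeness, exploit maximality properties of $P$ in $H$ (the absence of closed intermediate subgroups $P \lneq Q \lneq H$ with appropriate modular behaviour) combined with Corollary \ref{cor1_Thm2}(ii), which matches $\Lambda$-invariant $\tau$-stationary compact models with quasi-factors of $H/P$. For the failure of $L^2$-irreducibility, decompose the quasi-regular representation of $H$ on $L^2(H/P,\nu_{\theta_\tau})$ via Mackey-style induced-representation theory and locate a proper closed $H$-invariant subspace -- for instance, one arising from a non-trivial character of $H$ that factors through an abelian quotient, or from a finite-dimensional constituent of $\mathrm{Ind}_P^H \mathbf{1}$. Since $\rho(\Gamma) < H$, any such subspace is automatically $\sigma_2(\Gamma)$-invariant, and by density of $\rho(\Gamma)$ in $H$ it remains proper after restriction.

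The main obstacle is to calibrate $P$ so that both conditions hold simultaneously: $P$ must be maximal enough that $H/P$ carries no proper measurable $H$-factor (forcing $(B_\tau,\nu_\tau)$ to be prime via Corollary \ref{cor1_Thm2}), yet must sit inside $H$ so that $L^2(H/P)$ is nevertheless reducible as an $H$-representation. For the groups highlighted in the abstract this is plausible because their completions are non-unimodular solvable extensions that typically admit abelian (and hence reducible) constituents in the induced representation; Property \textsc{(P1)} of Theorem \ref{thm2} will be the crucial tool for verifying that these representation-theoretic features actually survive the restriction from $H$ to $\rho(\Gamma)$ and yield a genuine counterexample, while the explicit monoid structure of $\tau \mapsto \theta_\tau$ should make it possible to preclude $L^p$-irreducibility for all $p\geq 1$ simultaneously rather than just for $p=2$.
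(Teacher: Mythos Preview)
Your broad strategy matches the paper's: realise the Poisson boundary as a transitive $H$-space $H/P$ for the affine group $H=K\rtimes\langle\varpi\rangle$ over a local field, deduce primeness from maximality of $P$, and produce a proper closed invariant subspace in $L^2(H/P)$ which then restricts to $\rho(\Gamma)$. A few of the specific tools you cite are off, however, and would not close the argument as written.

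First, Corollary~\ref{cor1_Thm2}(ii) only controls \emph{$\Lambda$-invariant} $\tau$-stationary factors, so it cannot by itself establish primeness of $(B_\tau,\nu_\tau)$: an arbitrary $\Gamma$-factor need not carry a $\Lambda$-invariant stationary measure. The paper instead proves a separate dense-subgroup result (Corollary~\ref{cor_densesubgroupfactorhomo}, via Proposition~\ref{prop_dense}) showing that every $\Gamma$-factor of a transitive $H$-space $H/P$ is of the form $H/Q$ with $P<Q<H$ closed; primeness then follows from Lemma~\ref{lemma_Pismaximal}. Second, Corollary~\ref{cor1_Thm2}(i) only gives that $(B_{\theta_\tau},\nu_{\theta_\tau})$ is \emph{a} $\tau$-boundary; identifying it with the full Poisson boundary of $(\Gamma,\tau)$ is an independent input (Kaimanovich's work on these solvable groups, as in Corollary~\ref{cor1_thm3}).

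Finally, for $L^p$-reducibility the paper does not rely on \textsc{(P1)} or the monoid structure of $\tau\mapsto\theta_\tau$. The key point is rather that the $\theta_\tau$-stationary measure $\nu$ on $K\cong H/P$ is \emph{equivalent} to the Haar measure $m_K$ (this is where $\Lambda$-absorption is essential), so $L^p$-irreducibility is a measure-class question and it suffices to exhibit an $m_K$-decoupled pair of compactly supported functions---done explicitly in Lemma~\ref{z1z2} using additive characters of $K$. Your Mackey-theoretic idea appears in a remark as an alternative route for $p=2$, but the explicit decoupled-pair construction is what handles all $p\in[1,\infty)$ at once.
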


Bader and Muchnik \cite{BM} proved their conjecture for all Gromov hyperbolic groups, see also \cite{BD,CM2, D} for various extensions. We are not
aware of any previous investigations into $L^p$-irreducibility for $p \neq 2$. In 
Theorem \ref{thm3} below we provide explicit (solvable) measured groups which are \emph{not} $L^p$-irreducible for any $p \in [1,\infty)$,
thereby answering the conjecture above in the negative. The construction of our counterexample depends crucially on the work \cite{Kai0} of Kaimanovich, applied in combination with Theorem \ref{thm2} and Corollary \ref{cor1_Thm2} above. \\

\noindent \textbf{The topological structures of boundary entropy spectra}\\

Let $(\Gamma,\tau)$ be a countable measured group and let $(X,\xi)$ be a Borel $(\Gamma,\tau)$-space. The \emph{Furstenberg entropy} 
$h_\tau(X,\xi)$ is given by
\[
h_\tau(X,\xi) = \sum_{\gamma \in \Gamma} \tau(\gamma) \int_X -\log \frac{d\gamma^{-1}\xi}{d\xi}(x) \, d\xi(x),
\]
whenever the integral is well-defined.
The \emph{entropy spectrum} $\Ent(\Gamma,\tau)$ is defined by
\[
\Ent(\Gamma,\tau) = \{ h_\tau(X,\xi) \, \mid \, \textrm{$(X,\xi)$ is an ergodic Borel $(\Gamma,\tau)$-space} \big\}, 
\]
and the \emph{boundary entropy spectrum} $\BndEnt(\Gamma,\tau)$ is defined by
\[
\BndEnt(\Gamma,\tau) = \{ h_\tau(X,\xi) \, \mid \, \textrm{$(X,\xi)$ is a $\tau$-proximal Borel $(\Gamma,\tau)$-space} \big\}.
\]
Since every $\tau$-proximal Borel $(\Gamma,\tau)$-space is ergodic, we have $\BndEnt(\Gamma,\tau) \subseteq \Ent(\Gamma,\tau)$. \\

Starting with the discussions in \cite{NZ} by Nevo and Zimmer, the sets $\End(\Gamma,\tau)$ and $\BndEnt(\Gamma,\tau)$ have been subject to intense studies. It readily follows
from \cite{BoHaTa} that under mild assumptions, both $\Ent(\Gamma,\tau)$ and $\BndEnt(\Gamma,\tau)$ are continuous images 
of $G_\delta$-sets, whence analytic subsets of $[0,\infty)$. It has been conjectured (see for instance \cite{BuLuTa}) that under mild assumptions, 
$\BndEnt(\Gamma,\tau)$ is always a closed subset. 

Bowen proved in \cite{Bo} that for certain probability measures $\tau$ on a free group $\Gamma$ of finite rank, $\Ent(\Gamma,\tau)$ is a closed interval of the form $[0,h(\tau)]$, where $h(\tau)$ is the Furstenberg entropy of the
Poisson boundary of $(\Gamma,\tau)$. On the other hand, very little is known for $\BndEnt(\Gamma,\tau)$ in the same setting (although 
Tamuz and Zheng \cite{TaZh} have recently proved that it at least contains a Cantor set). 

There are at least two reasons for why the analysis of the boundary entropy spectrum of a measured group is difficult. Firstly, it is often hard to 
find a manageable parameterization of the set of all $\tau$-proximal Borel $(\Gamma,\tau)$-spaces. Secondly, even if such a parameterization
is available, computing the corresponding Furstenberg entropies is usually quite demanding. Using some results from \cite{BS}, we shall 
in Theorem \ref{thm4} below construct from a given prime measured group (for instance one of the ones provided by Theorem \ref{thm3}), another measured group, whose boundary entropy spectrum can be computed explicitly. As far as we know, this is the first \emph{explicit} realization of an infinite 
boundary 
entropy spectrum of a measured group. We also exhibit a curious phenomenon: a single countable group can admit different spread-out 
probability measures with radically different boundary entropy spectra; indeed, we construct in  Theorem \ref{thm4} below a countable group $\Gamma$ 
and two different
spread-out probability measures $\tau$ and $\tau'$ on $\Gamma$ so that $\BndEnt(\Gamma,\tau)$ is a Cantor set, while $\BndEnt(\Gamma,\tau')$
is a closed interval.

\subsection{Hecke measured groups from non-archimedean local fields}
\label{subsec:grpsaffine}

In what follows, let $(K,|\cdot|)$ be a non-archimedean local field with Haar measure $m_K$. We
set
\[
\cO = \{ x \in K \, \mid \, |x| \leq 1 \big\} \qand \cP = \{ x \in K \, \mid \, |x| < 1 \big\},
\]
and assume that the residue field $k = \cO/\cP$ is a finite cyclic group of (prime) order $q$. Pick a non-zero element $x_o \in \cO$ such that 
\[
S := \{0,x_o,\ldots,(q-1)x_o\} \subset \cO
\] 
is a set of representatives for $\cO/\cP$. We fix a uniformizer $\varpi$ of $K$. Then $|\varpi| = \frac{1}{q}$, and by \cite[Proposition 4.17(ii)]{RV}, 
every $x \in K$ can be uniquely expressed as a convergent power series in $\varpi$ of the form
\[
x = \sum_{j=n}^\infty a_j \varpi^j, \quad \textrm{for some $n \in \bZ$},
\]
where $a_j \in S$ for all $j$. Since $K$ is a field and $S$ has the special form above, we see that the additive group $\Xi$ which is generated by $0$ and 
all powers of $\varpi$ is dense in $K$. We denote by $\Xi_o$ the subgroup of $\Xi$ which is generated by $0$ and all non-negative powers of $\varpi$, and note that $\Xi_o = \Xi \cap \cO$ is dense in $\cO$. One readily checks that
\[
\varpi \, \Xi_o \subset \Xi_o \subset \varpi^{-1} \, \Xi_o \qand |\Xi_o/\varpi \,\Xi_o| < \infty,
\]
which in particular implies that
\[
\Gamma = \Xi \rtimes \langle \varpi \rangle \qand \Lambda = \Xi_o \rtimes \{1\},
\]
where $\langle \varpi \rangle$ denotes the cyclic (multiplicative) group generated by $\varpi$ (which acts on $\Xi$ by multiplication)
is a Hecke pair. Furthermore, 
\[
H = K \rtimes \langle \varpi \rangle \qand L = \cO \rtimes \{1\} \qand \rho = \textrm{id},
\]
is a completion triple of $(\Gamma,\Lambda)$. It is not difficult to show that $\Gamma$ is a finitely generated group and that $H$ is a lcsc 
compactly generated group. We shall think of $\Gamma$ as a dense subgroup of $H$. Moreover, $H$ acts jointly continuously and transitively on $K$ by 
\[
(x,\varpi^n)y = x + \varpi^n y, \quad \textrm{for $(x,\varpi^n) \in H$ and $y \in K$}.
\] 
In particular, $K \cong H/P$, where $P = \{0\} \rtimes \langle \varpi \rangle$.  We write $\pr_{\bZ}$ for the surjective homomorphism
\begin{equation}
\label{def_prZ}
\pr_{\bZ} : H \ra \bZ, \enskip (x,\varpi^n) \mapsto n.
\end{equation}
Our third main theorem now reads as follows. 

\begin{theorem}
\label{thm3}
Let $\tau$ be a finitely supported, spread-out and $\Lambda$-absorbing probability measure on $\Gamma$. 
Suppose that 
\begin{equation}
\label{negdrift_intro}
\sum_{n \in \bZ} n \, (\pr_{\bZ})_*\tau(n) < 0.
\end{equation}
Then the following holds.
\vspace{0.1cm}
\begin{enumerate}
\item[\textsc{(i)}] There exists a unique $\tau$-stationary probability measure $\nu$ on $K$. \vspace{0.2cm}
\item[\textsc{(ii)}] $\nu$ is $\cO$-invariant, $H$-quasi-invariant and absolutely continuous with respect to $m_K$ with an $\cO$-invariant (thus continuous) everywhere positive density. \vspace{0.2cm}
\item[\textsc{(iii)}] The Borel $(\Gamma,\tau)$-space $(K,\nu)$ is prime and $\tau$-proximal. \vspace{0.2cm}
\item[\textsc{(iv)}] For every $p \in [1,\infty)$, the quasi-regular $\Gamma$-representation on $L^p(K,\nu)$ is \emph{not} irreducible.
\end{enumerate}
\end{theorem}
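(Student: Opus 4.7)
The plan is to transfer the problem to the Hecke completion $(H,\theta_\tau)$ of Theorem \ref{thm2}. Here $\theta_\tau$ is a compactly supported, spread-out, bi-$L$-invariant probability measure on $H = K \rtimes \langle \varpi\rangle$, and Property \textsc{(P1)} gives $\theta_\tau(\pr_\bZ) = \tau(\pr_\bZ \circ \rho) < 0$, so the random walk on $H$ driven by $\theta_\tau$ inherits the negative drift in $\bZ$. The key input, to be drawn from Kaimanovich's analysis (\cite{Kai0}, \cite{KaWo}) of boundaries for affine random walks over non-archimedean local fields, is that $(K,\nu)$ is the Poisson boundary of $(H,\theta_\tau)$, where $\nu$ is the unique $\theta_\tau$-stationary probability measure on $K \cong H/P$, realized as the distribution of a certain random series in $K$ that converges on account of the negative drift.

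For parts \textsc{(i)} and \textsc{(ii)}: $L$-invariance of $\nu$ is immediate from left $L$-invariance of $\theta_\tau$ together with uniqueness, since $\theta_\tau * (m_L * \nu) = \theta_\tau * \nu = \nu$ forces $m_L * \nu = \nu$. Because $\cO$ is clopen in $K$, any $\cO$-invariant probability measure decomposes as a convex combination of normalized Haar measures on $\cO$-cosets, giving $\nu \ll m_K$ with a locally constant (hence continuous) density; positivity everywhere follows from $\supp\nu = K$, which uses spread-outness of $\tau$ together with denseness of $\Gamma$-orbits in the $H$-homogeneous space $K$. Absolute continuity combined with quasi-invariance of $m_K$ under the affine $H$-action yields $H$-quasi-invariance of $\nu$. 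Uniqueness of the $\tau$-stationary measure reduces, via $L$-averaging and Property \textsc{(P3)}, to uniqueness of the $\theta_\tau$-stationary measure.

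For \textsc{(iii)}: Property \textsc{(P4)} immediately transfers $\theta_\tau$-proximality of $(K,\nu)$ to $\tau$-proximality. For primeness, given a $\Gamma$-equivariant measure-preserving factor $\phi: (K,\nu) \to (X,\xi)$, the pushforward $\xi = \phi_*\nu$ is $\Lambda$-invariant (as $\Lambda \subset \cO$), so Corollary \ref{cor1_Thm2}\textsc{(ii)} identifies $(X,\xi)$ measurably with a $\theta_\tau$-boundary. Since $(K,\nu)$ is the Poisson boundary of $(H,\theta_\tau)$, such a $\theta_\tau$-boundary must be an $H$-quotient $H/Q$ for some closed subgroup $P \subseteq Q \subseteq H$. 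A direct check shows that the only closed additive subgroups of $K$ invariant under both $\varpi$ and $\varpi^{-1}$ are $\{0\}$ and $K$, whence $Q \in \{P,H\}$; hence $(X,\xi)$ is either essentially trivial or measurably $\Gamma$-isomorphic to $(K,\nu)$.

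For \textsc{(iv)}: the isometry $g \mapsto g \cdot (d\nu/dm_K)^{1/p}$ identifies $L^p(K,\nu)$ with $L^p(K,m_K)$ as $\Gamma$-representations, reducing the problem to producing a non-trivial closed $\Gamma$-invariant subspace of the quasi-regular $H$-representation on $L^p(K,m_K)$ — any $H$-invariant closed subspace is automatically $\Gamma$-invariant. For $p=2$, Mackey theory decomposes this unitary representation as a direct integral over the $\langle \varpi\rangle$-orbit space of $K\setminus\{0\}$ (canonically $\cO^*$), producing abundant $H$-invariant proper closed subspaces. For general $p \in [1,\infty)$ one uses convolution operators by non-trivial $P$-biinvariant probability measures on $H$: these are bounded on $L^p(K,m_K)$, commute with the $H$-action, and their non-scalar nature provides $\Gamma$-invariant closed subspaces via the kernels of $T - cI$ for $c$ in the spectrum. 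The main technical obstacle is this last step for $p \neq 2$, where the lack of a Plancherel formula requires delicate arguments along the lines of Kaimanovich's construction from \cite{Kai0}.
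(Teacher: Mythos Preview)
Your treatment of \textsc{(i)}--\textsc{(iii)} is broadly aligned with the paper's, with one caveat: your reduction of $\tau$-uniqueness to $\theta_\tau$-uniqueness via ``$L$-averaging and Property \textsc{(P3)}'' is incomplete, since an arbitrary $\tau$-stationary probability measure on $K$ need not be $L$-invariant, and it is not clear that $m_L * \eta$ remains $\tau$-stationary. The paper avoids this by citing \cite{CKW} directly for both $\Gamma$ and $H$ (both are non-exceptional subgroups of the affine group of $K$), obtaining uniqueness and proximality for $\tau$ and $\theta_\tau$ separately, then identifying the two measures via \textsc{(P3)}. For primeness the paper goes through Corollary~\ref{cor_densesubgroupfactorhomo} directly (every $\Gamma$-factor of a transitive $H$-space $H/P$ is $\Gamma$-isomorphic to some $H/Q$) rather than through Corollary~\ref{cor1_Thm2}\textsc{(ii)}; the maximality of $P$ is then checked essentially as you indicate.

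The genuine gap is in \textsc{(iv)} for $p \neq 2$. Your proposal to use convolution operators by $P$-biinvariant probability measures is problematic on several counts: $P \cong \bZ$ is non-compact, so bi-$P$-invariance is a degenerate condition for probability measures; even granting a non-scalar bounded operator commuting with the $H$-action, $\ker(T - cI)$ may be trivial for every $c$ in the spectrum when the spectrum is purely continuous, so this does not by itself produce invariant subspaces; and \cite{Kai0} concerns identification of Poisson boundaries, not $L^p$-reducibility. The paper's approach is both simpler and uniform in $p$: after reducing (as you do) to $L^p(K,m_K)$, it exhibits an explicit \emph{$m_K$-decoupled pair} of compactly supported continuous functions $F_{z_1}, F_{z_2}$, namely $F_z(x) = \lambda(xz)\,\chi_{\cO}(x)$ for a suitable non-trivial additive character $\lambda \in \widehat{K}$ and $z_1, z_2 \in \cO^*$ with $|z_1 - z_2| \geq 1/q$, and verifies by a direct computation (splitting into the cases $m \geq 0$ and $m < 0$) that
\[
\int_K F_{z_1}\big((y,\varpi^m)\cdot x\big)\,\overline{F_{z_2}(x)}\,dm_K(x) = 0 \quad \textrm{for all $(y,\varpi^m) \in H$}.
\]
Since $F_{z_1}, F_{z_2} \in C_c(K) \subset L^p \cap L^{p'}$ for every $p$, this single computation settles non-irreducibility for all $p \in [1,\infty)$ at once. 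The paper itself notes in a concluding remark that the Mackey route works abstractly for $p=2$ but does not obviously extend --- precisely the obstacle you flagged but did not resolve.
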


\begin{remark}
The existence, uniqueness and $\tau$-proximality of $\nu$ (assuming that $\tau$ is finitely supported, spread-out and satisfies \ref{negdrift_intro}) 
is a standard result (see e.g. \cite{Kai0, Br0, Br}). Furthermore, \textsc{(iii)} and \textsc{(iv)} hold whenever $\nu$ is absolutely continuous with respect 
to the Haar measure $m_K$. Only \textsc{(ii)}, which is the key point of the theorem, exploits the assumption that $\tau$ is $\Lambda$-absorbing.
This raises the question how essential $\Lambda$-absorption is to ensure that $\nu$ is absolutely continuous with respect to $m_K$. In general this is quite a subtle question (for archimedean fields, this is closely related to the classical line of research, initated by Erd\"os and Wintner, pertaining to absolute continuity of Bernoulli convolutions). Recently, Brieussel and Tanaka \cite{BrTa} have developed a technique to construct finitely supported and spread-out probability measures on groups of
\emph{real} affine transformations, whose actions on the real line admit unique stationary measures which are \emph{singular} with respect to the Lebesgue
measure. We plan to extend their techniques to non-archimedean fields  in future works, thus showing that $\Lambda$-absorption of $\tau$ is crucial
to establish \textsc{(ii)}.
\end{remark}

\begin{remark}
Before we proceed, we show how one can explicitly construct finitely supported and spread-out $\tau$ in $\Prob(\Gamma ; \Lambda)$ which
satisfy \eqref{negdrift_intro}. Fix $0 < \delta < 1/2$. Given two probability measures $\tau_1$ and $\tau_{-1}$ on $\Xi$, we define
\[
\tau(x,\varpi^{n})
=
\left\{
\begin{array}{cl}
\tau_1(x)\delta & \textrm{if $n = 1$} \\[0.2cm]
\tau_{-1}(x)(1-\delta) & \textrm{if $n = -1$} \\[0.2cm]
0 & \textrm{otherwise}
\end{array}
\right.,
\quad 
\textrm{for $(x,\varpi^{n}) \in \Gamma$}.
\]
We note that $\tau$ is a probability measure on $\Gamma$ for which \eqref{negdrift_intro} holds. It thus remains to produce
finitely supported probability measures $\tau_{1}$ and $\tau_{-1}$ on $\Xi$ so that $\tau$ is spread-out and $\Lambda$-absorbing. We first observe that
\[
\alpha_*\tau((x,\varpi^{n})\Lambda)
=
\left\{
\begin{array}{cl}
\Big(\sum_{\xi \in \Xi_o}\tau_1(x + \varpi \xi) \Big) \, \delta & \textrm{if $n = 1$} \\[0.2cm]
\Big(\sum_{\xi \in \Xi_o} \tau_{-1}(x + \varpi^{-1}\xi) \Big) \, (1-\delta) & \textrm{if $n = -1$} \\[0.2cm]
0 & \textrm{otherwise}
\end{array}
\right.,
\quad 
\textrm{for $(x,\varpi^n)\Lambda \in \Gamma/\Lambda$}.
\]
The third expression is clearly $\Lambda$-invariant, and since $\varpi^{-1}\Xi_o \supset \Xi_o$, the middle expression is 
$\Lambda$-invariant as well, for any choice of $\tau_{-1}$. Hence we must only construct $\tau_1$ so that the first expression
is $\Lambda$-invariant. To do this, we pick a set of representatives $T$ for $\Xi_o/\varpi \Xi_o$. Then, for every probability measure
$\kappa$ on $\Xi$, we set
\[
\tau_1(x) = \frac{1}{|T|} \sum_{t \in T} \kappa(x + t), \quad \textrm{for $x \in \Xi$},
\]
and note that $\tau_1$ is a probability measure on $\Xi$, and
\[
\sum_{\xi \in \Xi_o}\tau_1(x + \varpi \xi) = \frac{1}{|T|} \sum_{t \in T} \sum_{\xi \in \varpi \Xi_o} \kappa(x + t + \xi) = \frac{1}{|T|} \sum_{\xi \in \Xi_o} \kappa(x + \xi),
\]
which is clearly $\Xi_o$-invariant, whence $\tau$ with this choice of $\tau_1$ will be $\Lambda$-absorbing. If we further ensure that 
$\tau_{-1}$ and $\kappa$ are finitely supported, so that the resulting support of $\tau$ contains a finite generating set for $\Gamma$, then we have constructed a finitely supported, spread-out and $\Lambda$-absorbing probability measure on $\Gamma$ which satisfies \eqref{negdrift_intro}. \\
\end{remark}

The question arises whether $(K,\nu)$ in Theorem \ref{thm3} is the Poisson boundary of $(\Gamma,\tau)$. Since we are not aware of a good 
reference in this complete generality, we confine our attention to two important special cases, to which the work \cite{Kai0} of Kaimanovich applies.  

\begin{example}[Solvable Baumslag-Solitar groups]
\label{ex1.1}
Let $q$ be a prime number, and set 
\[
K = \bQ_q \qand S = \{0,1\ldots,q-1\} \qand \varpi = q.
\]
Then $\Xi \cong \bZ[1/q]$ and $\Xi_o \cong \bZ$. It is not difficult to see that $\Gamma$ is isomorphic to the Baumslag-Solitar group 
$\BS(1,q) = \langle a,b \, \mid \, bab^{-1} = a^q \rangle$, with $\Lambda \cong \langle a \rangle \cong \bZ$.
\end{example}

\begin{example}[Lamplighter groups]
\label{ex1.2}
Let $q$ be a prime number, and set 
\[
K = \bF_q(t) \qand S = \{0,1\ldots,q-1\} \qand \varpi = t,
\]
where $\bF_q$ denotes the finite field with $q$ elements. Then $\Xi \cong \bigoplus_{\bZ} \bF_q$ and $\Xi_o \cong \bigoplus_{\bN} \bF_q$. It is not difficult to see that $\Gamma$ is isomorphic to the 
wreath product (lamplighter group) $\bF_q \wr \bZ$. Note that in this case, $\Lambda$ is an infinite locally finite group, whence infinitely 
generated.
\end{example}

Kaimanovich \cite[Section 5 and 6]{Kai0} (see also Brofferio \cite{Br0,Br}) has shown that in the two examples above, $(K,\nu)$ is in fact the  
Poisson boundary of $(\Gamma,\tau)$, if $\tau$ is finitely supported and satisfies \eqref{negdrift_intro}. In particular, we now have the following 
corollary. 

\begin{corollary}
\label{cor1_thm3}
Let $\Gamma$ be as in either Example \ref{ex1.1} or Example \ref{ex1.2}, and let $\tau$ be as in Theorem \ref{thm3}.
Then $(\Gamma,\tau)$ is a prime measured group, whose Poisson boundary is not $L^p$-irreducible for any $p \in [1,\infty]$.
\end{corollary}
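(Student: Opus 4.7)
The plan is to reduce the corollary to the already-established Theorem \ref{thm3} by identifying $(K,\nu)$ with the Poisson boundary of $(\Gamma,\tau)$. Once this identification is in hand, both primeness and the failure of $L^p$-irreducibility transfer immediately from assertions \textsc{(iii)}--\textsc{(iv)} of that theorem, with the case $p=\infty$ handled by a general soft argument.

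The first and only step with any real content is verifying that Kaimanovich's boundary identification applies. In both Example \ref{ex1.1} (the solvable Baumslag--Solitar group $\BS(1,q)$ acting on $\bQ_q$) and Example \ref{ex1.2} (the lamplighter group $\bF_q\wr\bZ$ acting on $\bF_q(t)$), the probability measure $\tau$ is finitely supported, so it automatically has finite entropy and finite first logarithmic moment with respect to any word length on $\Gamma$. Together with the negative drift hypothesis \eqref{negdrift_intro}, these are exactly the assumptions that feed into the ray/strip criterion developed by Kaimanovich in \cite[Sections 5--6]{Kai0}, and supplemented by Brofferio in \cite{Br0,Br}. I would therefore quote those results to conclude that the unique $\tau$-stationary measure $\nu$ on $K$ produced by Theorem \ref{thm3}\textsc{(i)} equips $K$ with the structure of the Poisson boundary of $(\Gamma,\tau)$, i.e.\ $(B_\tau,\nu_\tau)\cong(K,\nu)$ as measured $\Gamma$-spaces. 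This translation of hypotheses is routine, but it is the hinge of the argument: everything else is purely formal once $(K,\nu)$ has been certified as the Poisson boundary.

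With this identification in hand, primeness of the measured group $(\Gamma,\tau)$ is, by definition, primeness of its Poisson boundary, which is provided by Theorem \ref{thm3}\textsc{(iii)}. For $p\in[1,\infty)$, the quasi-regular $\Gamma$-representation on $L^p$ of the Poisson boundary coincides, under the identification, with the $L^p$-quasi-regular representation on $(K,\nu)$; the latter fails to be irreducible by Theorem \ref{thm3}\textsc{(iv)}. Finally, for $p=\infty$ I would invoke the general observation recorded in the discussion preceding the Bader--Muchnik conjecture in Section 1: since $\Gamma$ is countable and $\nu$ is equivalent to the Haar measure $m_K$ by Theorem \ref{thm3}\textsc{(ii)}, the support of $\nu$ is the uncountable space $K$, so $L^\infty(K,\nu)$ is non-separable in the norm topology, and no countable $\sigma_\infty$-orbit can span a norm-dense subspace. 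This rules out $L^\infty$-irreducibility and completes the proof.
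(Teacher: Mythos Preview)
Your proposal is correct and follows essentially the same approach as the paper: the corollary is presented there as an immediate consequence of Kaimanovich's identification of $(K,\nu)$ with the Poisson boundary of $(\Gamma,\tau)$ (quoted from \cite[Sections 5--6]{Kai0} and \cite{Br0,Br} in the paragraph just before the corollary), after which Theorem \ref{thm3}\textsc{(iii)}--\textsc{(iv)} yields primeness and non-$L^p$-irreducibility for $p\in[1,\infty)$, with the $p=\infty$ case handled by the general non-separability remark in the introduction. Your write-up spells out the verification of Kaimanovich's hypotheses and the $p=\infty$ argument more explicitly than the paper does, but the logic is identical.
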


\subsection{Explicit realizations of boundary entropy spectra}

Kakeya suggested in \cite{Kak} an interesting way to generate closed subsets of the real line along the following lines. Given a positive summable 
sequence $\beta = (\beta_1,\beta_2,\ldots)$, define its \emph{subsum set} $\SubSum(\beta)$ by
\[
\SubSum(\beta) = \Big\{ \sum_{n \in S} \beta_n \, \mid \, S \subseteq \bN \Big\} \subset [0,\infty).
\]
It is not difficult to show that $\SubSum(\beta)$ is always a closed set, and Kakeya proved in 1915 that it is also perfect. After subsequent independent 
work by Hornich \cite{Horn} and Guthrie and Nymann \cite{GuNy} (see also the survey \cite{Ni}), the structure of subsum sets is now very well understood. We summarize 
their findings in the following theorem.

\begin{theorem}
\label{thm_GuNy}
Let $\beta = (\beta_1,\beta_2,\ldots)$ be a positive summable sequence. 
\begin{itemize}
\item[\textsc{(i)}] $\SubSum(\beta)$ is either: \vspace{0.1cm}
\begin{itemize}
\item a finite union of disjoint closed intervals. \vspace{0.1cm}
\item a Cantor set. \vspace{0.1cm}
\item a symmetric Cantorval, i.e. a non-empty compact subset of $[0,\infty)$, which is equal to the closure of its interior, and which has the
property that every pair of endpoints of a non-trivial connected component, consists of accumulation points of one point components.\vspace{0.1cm}
\end{itemize}
\item[\textsc{(ii)}] Suppose that $\beta$ is non-increasing, and set $B_n = \sum_{k > n} \beta_k$ for $n \geq 0$. \vspace{0.1cm}
\begin{itemize}
\item If $\beta_n > B_n$ for all $n \geq 1$, then $\SubSum(\beta)$ is a Cantor set with Lebesgue measure $\lim_n 2^n B_n$. \vspace{0.1cm}
\item If $\beta_n \leq B_n$ for all $n \geq 1$, then $\SubSum(\beta)$ is the interval $[0,B_0]$.
\end{itemize}
\end{itemize}
\end{theorem}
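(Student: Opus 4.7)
The plan is to analyze $\SubSum(\beta)$ through the continuous parameterization $\phi \colon \{0,1\}^{\bN} \to [0,B_0]$ given by $\phi(s) = \sum_n s_n \beta_n$, which surjects onto $\SubSum(\beta)$ and makes it compact. Since the subsum set is unchanged by permutation of $\beta$, after reordering we may assume the sequence is non-increasing throughout, so that the trichotomy in \textsc{(i)} is ultimately controlled by the interaction between the terms $\beta_n$ and their tails $B_n$, precisely as in \textsc{(ii)}.

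The central device for \textsc{(ii)} is the nested family
\[
E_n := F_n + [0,B_n], \qquad F_n := \Big\{ \sum_{k=1}^n \epsilon_k \beta_k : (\epsilon_k) \in \{0,1\}^n \Big\},
\]
a union of $2^n$ closed intervals of length $B_n$. One checks $E_n \supseteq E_{n+1}$ (since $[0,B_{n+1}] \cup [\beta_{n+1},\beta_{n+1}+B_{n+1}] \subseteq [0,B_n]$) and $\bigcap_n E_n = \SubSum(\beta)$. Passing from $E_n$ to $E_{n+1}$ replaces each parent interval $[a, a+B_n]$ by $[a, a+B_{n+1}] \cup [a+\beta_{n+1}, a+B_n]$, and these two pieces are disjoint, touching, or overlapping precisely when $\beta_{n+1}$ is $>$, $=$, or $<$ than $B_{n+1}$. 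If $\beta_n > B_n$ for all $n$, induction shows $E_n$ is a disjoint union of $2^n$ intervals of length $B_n$, so $\Leb(E_n) = 2^n B_n$ and hence $\Leb(\SubSum(\beta)) = \lim_n 2^n B_n$; the intersection is perfect because flipping the $n$-th bit perturbs $\phi(s)$ by $\beta_n \to 0$, and nowhere dense because positive gaps $\beta_{n+1} - B_{n+1}$ are introduced into every component at every level, giving a Cantor set. Conversely, if $\beta_n \leq B_n$ for all $n$, at each stage the two descendant intervals cover their parent, so by induction $E_n = [0,B_0]$ and $\SubSum(\beta) = [0,B_0]$.

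Part \textsc{(i)} is the classical Guthrie--Nymann trichotomy (refining work of Kakeya and Hornich), and the extremes are handled by the above argument applied to a tail: if $\{n : \beta_n > B_n\}$ is finite, applying \textsc{(ii.b)} to some tail $(\beta_n)_{n \geq N}$ yields an interval $[0,B_{N-1}]$, and convolving with the finitely many leading $\{0,\beta_k\}$ produces a finite union of closed intervals; if $\{n : \beta_n \leq B_n\}$ is finite, the analogous argument gives a Cantor set. The main obstacle is the genuinely mixed regime where both sets are infinite: the recursive disjointness/coverage dichotomy no longer propagates cleanly through the nested family $E_n$, so one cannot directly read off the topology of $\bigcap_n E_n$. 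Here the delicate point is to decide whether every gap introduced at some finite level $n$ survives all subsequent refinements, or whether new intervals keep appearing inside old gaps so as to produce the characteristic mixed structure of a symmetric Cantorval (components of positive length whose endpoints are accumulated by one-point components). I would defer this combinatorial-topological analysis to the references \cite{GuNy,Ni}, whose classification by local accumulation behaviour at component endpoints is exactly what is needed to separate the Cantor and Cantorval cases.
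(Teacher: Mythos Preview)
The paper does not prove this theorem at all: it is stated as a summary of classical results and attributed entirely to Kakeya, Hornich, and Guthrie--Nymann, with a pointer to the survey \cite{Ni}. There is no ``paper's own proof'' to compare against.

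Your sketch is essentially correct and follows the standard route. The nested approximation $E_n = F_n + [0,B_n]$ is exactly the right device for \textsc{(ii)}, and your analysis of the dichotomy (disjoint children when $\beta_{n+1} > B_{n+1}$, covering children when $\beta_{n+1} \le B_{n+1}$) is accurate; the measure computation via continuity from above and the perfectness/total-disconnectedness arguments are fine. One small point worth making explicit is why the $2^n$ intervals at level $n$ are pairwise disjoint in the Cantor case: you need that the sequence is non-increasing so that children of distinct parents stay inside their (already disjoint) parents, which you have assumed. For \textsc{(i)} you correctly reduce the two extreme regimes to \textsc{(ii)} applied to a tail, and you are right that the genuinely mixed case (both $\{n:\beta_n>B_n\}$ and $\{n:\beta_n\le B_n\}$ infinite) is the substantive content of Guthrie--Nymann; deferring that to \cite{GuNy,Ni} is exactly what the paper does.
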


\begin{remark}
In particular, if we let $\beta_n = a\rho^{n-1}$ for some $a > 0$ and $\rho \in (0,1)$, then 
\[
\SubSum(\beta)
=
\left\{
\begin{array}{cc}
\textrm{a Cantor set of Lebesgue measure zero if $\rho < 1/2$}. \\[0.2cm]
\textrm{the interval $[0,\frac{a}{1-\rho}]$ if $\rho \geq 1/2$}.
\end{array}
\right.
\]
\end{remark}

Our next theorem connects subsum sets with boundary entropy spectra of measured groups. We recall from the discussion above 
that the boundary entropy spectrum of a measured group $(\Gamma,\tau)$ is given by
\[
\BndEnt(\Gamma,\tau) = \{ h_\tau(X,\xi) \, \mid \, \textrm{$(X,\xi)$ is a $\tau$-proximal Borel $(\Gamma,\tau)$-space} \big\}.
\]

\begin{theorem}
\label{thm4}
There exists a countable discrete group $\Gamma$ with the following property: for every positive summable 
sequence $\beta = (\beta_1,\beta_2,\ldots)$, there is a spread-out probability measure $\tau_\beta$ on $\Gamma$ such that
\[
\BndEnt(\Gamma,\tau_\beta) = \SubSum(\beta).
\]
In particular we can find spread-out probability measures $\tau$ and $\tau'$ on $\Gamma$ such that the boundary entropy
spectrum of $(\Gamma,\tau)$ is a Cantor set, while the boundary entropy spectrum of $(\Gamma,\tau')$ is an interval.
\end{theorem}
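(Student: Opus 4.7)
\textbf{Proof strategy for Theorem \ref{thm4}.} The plan is to fix once and for all a prime measured group $(\Gamma_0,\tau_0)$ with Poisson boundary $(B_0,\nu_0)$ of Furstenberg entropy $h_0>0$, provided by Corollary \ref{cor1_thm3} (so $\Gamma_0$ is e.g.\ $\BS(1,q)$ or $\bF_q \wr \bZ$), and to take
\[
\Gamma = \bigoplus_{n\in\bN} \Gamma_0^{(n)},
\]
the restricted direct sum of countably many disjoint copies of $\Gamma_0$. The group $\Gamma$ does not depend on $\beta$; only the measure $\tau_\beta$ will. The idea is that independence of the walk components on different factors forces the Poisson boundary of $(\Gamma,\tau_\beta)$ to split as a product, and primeness of each factor forces every $\tau_\beta$-boundary to arise as a partial product indexed by a subset $S\subseteq \bN$, whose Furstenberg entropy turns out to be exactly $\sum_{n\in S}\beta_n$.

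\textbf{Construction of $\tau_\beta$.} Given a positive summable sequence $\beta=(\beta_n)$, I would fix weights $p_n>0$ with $\sum_n p_n =1$ (say $p_n=2^{-n}$) and, on each factor $\Gamma_0^{(n)}$, build a probability measure $\mu_n$ of the form
\[
\mu_n = s_n\, \tau_0^{*k_n} + (1-s_n)\,\delta_e,
\]
with $k_n\in\bN$ large and $s_n\in(0,1]$ chosen so that $s_n k_n h_0 = \beta_n/p_n$ (such a pair exists for every value $\beta_n/p_n>0$ by taking $k_n$ large first). Here $\tau_0^{*k_n}$ has the same Poisson boundary $(B_0,\nu_0)$ as $\tau_0$, with Furstenberg entropy $k_n h_0$; mixing with $\delta_e$ and scaling by $s_n$ preserves the boundary and scales the entropy by $s_n$, giving $h_{\mu_n}(B_0,\nu_0)=\beta_n/p_n$. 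I set
\[
\tau_\beta = \sum_{n\in\bN} p_n\, \mu_n,
\]
viewed as a measure on $\Gamma$ by regarding each $\mu_n$ as supported on $\Gamma_0^{(n)}\subset \Gamma$. The support of $\tau_\beta$ contains the union of the supports of the $\mu_n$, which generates $\Gamma$, so $\tau_\beta$ is spread-out. A short check, using that pushforwards of $(B_0,\nu_0)$ along $\Gamma_0$-maps are automatically $\tau_0$-stationary (hence $\mu_n$-stationary), shows that primeness of $(B_0,\nu_0)$ as a $(\Gamma_0,\tau_0)$-space transfers to primeness as a $(\Gamma_0,\mu_n)$-space.

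\textbf{Boundary classification and entropy.} Because the random walk driven by $\tau_\beta$ decomposes as independent $\mu_n$-walks on disjoint factors, a standard product formula (see Kaimanovich–Vershik) identifies the Poisson boundary of $(\Gamma,\tau_\beta)$ with the product
\[
(B,\nu) = \prod_{n\in\bN}(B_0^{(n)},\nu_0^{(n)}),
\]
and independence of increments yields $h_{\tau_\beta}(B,\nu)=\sum_n p_n h_{\mu_n}(B_0^{(n)},\nu_0^{(n)})=\sum_n \beta_n$. For any $S\subseteq\bN$, projection onto $\prod_{n\in S} B_0^{(n)}$ is a $\Gamma$-equivariant quotient of $(B,\nu)$, hence a $\tau_\beta$-boundary with entropy $\sum_{n\in S}\beta_n$. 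This gives the inclusion $\SubSum(\beta)\subseteq \BndEnt(\Gamma,\tau_\beta)$. For the reverse inclusion I invoke the product decomposition theorem from \cite{BS}: in the presence of primeness of each factor $(B_0^{(n)},\nu_0^{(n)})$ and the product structure of the walk, every measure-preserving $\Gamma$-quotient of $\prod_n(B_0^{(n)},\nu_0^{(n)})$ is measurably $\Gamma$-isomorphic to $\prod_{n\in S}(B_0^{(n)},\nu_0^{(n)})$ for some $S\subseteq \bN$. Consequently $\BndEnt(\Gamma,\tau_\beta)=\SubSum(\beta)$. The final statement about a fixed $\Gamma$ admitting spread-out measures whose boundary entropy spectra are respectively a Cantor set and an interval then follows by applying Theorem \ref{thm_GuNy}, e.g.\ with geometric sequences $\beta_n=a\rho^{n-1}$ for $\rho<1/2$ and $\rho\geq 1/2$.

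\textbf{Main obstacle.} The delicate step is the upper bound on $\BndEnt$, i.e.\ the classification of all $\tau_\beta$-boundaries as partial products. This relies on the rigidity result from \cite{BS} about boundary factors of product-type random walks on direct sums; the hypothesis of \emph{primeness} in each coordinate is what rules out proper subfactors within a single $B_0^{(n)}$ and forces the boundary quotient to be an ``all-or-nothing'' choice per coordinate. Everything else is bookkeeping: verifying that the tricks of convolving $\tau_0$ and of mixing with $\delta_e$ preserve both the Poisson boundary and its primeness, that they allow realization of arbitrary positive Furstenberg entropy values in each factor, and that $\tau_\beta$ remains spread-out on $\Gamma$.
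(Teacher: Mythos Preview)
Your overall architecture is right and matches the paper's: take $\Gamma=\bigoplus_n\Gamma_0^{(n)}$ for a fixed prime measured group, force every $\tau_\beta$-boundary to be a partial product $\prod_{n\in S}B_0^{(n)}$ via \cite[Theorem 3.6]{BS}, and compute the Furstenberg entropy of each such partial product to be $\sum_{n\in S}\beta_n$. But there is a genuine gap in how you invoke \cite{BS}. That result (recorded in the paper as Proposition~\ref{prop_adelic}) applies to a \emph{product measure} $\widetilde\tau=\bigotimes_k\sigma_k$, not to a convex combination of single-factor measures like your $\tau_\beta=\sum_n p_n\mu_n$. Your walk does not decompose into independent coordinate walks---the numbers of steps in different factors are coupled through the constraint that they sum to the total time---and neither the identification of the Poisson boundary with the full product nor the classification of its $\Gamma$-factors as partial products follows from the references you cite (Kaimanovich--Vershik contains no such product formula).

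The paper fills this gap by introducing the auxiliary product measure $\widetilde\tau$ (supported on the direct sum because the factor measures put summable mass away from the identity), checking $\tau_\beta*\widetilde\tau=\widetilde\tau*\tau_\beta$, and then applying Corollary~\ref{cor_mu1mu2} and Proposition~\ref{prop_proxcrit}\,\textsc{(ii)} to transfer the \cite{BS} classification from $\widetilde\tau$-boundaries to $\tau_\beta$-boundaries. Your construction can be repaired identically: set $\widetilde\mu=\bigotimes_n\mu_n$ and use the same commutation argument; but for $\widetilde\mu$ to live on the direct sum you need $\sum_n(1-\mu_n(e))<\infty$, hence $\sum_n s_n<\infty$, which forces you to choose $k_n$ growing fast enough---a constraint you do not currently impose. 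Once that is added your argument goes through, and your single-factor tuning of the entropies (adjusting $\mu_n$ directly so that $p_n\,h_{\mu_n}(B_0,\nu_0)=\beta_n$) is then a legitimate and arguably cleaner alternative to the paper's telescoping construction $\tau_\beta=\sum_n\alpha_n\,\sigma_1\otimes\cdots\otimes\sigma_n$ with the two-parameter scaling $\beta_k=p_kq_kh(\sigma)$.
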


\begin{remark}
It follows from the proof of Theorem \ref{thm4}, in combination with Corollary \ref{cor1_thm3}, that we can for instance take $\Gamma = \bigoplus_{\bN} \BS(1,2)$, the direct sum of countably many copies of the Baumslag-Solitar group $\BS(1,2)$. We do not know if $\Gamma$ in Theorem \ref{thm4}
can be chosen \emph{finitely generated}.
\end{remark}

\subsection{Some questions}
\label{subsec:somequestions}

In this subsection we briefly collect some questions and open problems which are related to the theorems, discussions and corollaries above. \\

In what follows, let $(\Gamma,\Lambda)$ be a Hecke pair, $(H,L,\rho)$ a completion triple of $(\Gamma,\Lambda)$ and $\tau$ a $\Lambda$-absorbing and spread-out probability measure on $\Gamma$. Let $(H,\theta_\tau)$ denote the Hecke completion of $(\Gamma,\tau)$ with respect 
to $\rho$, and write $(B_\tau,\nu_\tau)$ and $(B_{\theta_\tau},\nu_{\theta_\tau})$ for the Poisson boundaries of the measured groups $(\Gamma,\tau)$ and $(H,\theta_\tau)$
respectively.

\vspace{0.2cm}

\noindent \textsc{Martin boundaries} 
\vspace{0.1cm}

\begin{question}
Is there a natural relation between the (minimal) Martin boundary of $(\Gamma,\tau)$ and the (minimal) Martin boundary of $(H,\theta_\tau)$?
\end{question}

\noindent \textsc{A relative Liouville theorem for Hecke pairs} 
\vspace{0.2cm}

The following question asks for a converse to \textsc{(iii)} in Corollary \ref{cor1_Thm2}.

\begin{question}
Suppose that $\Lambda$ is amenable. Given a bi-$L$-invariant spread-out probability measure $\theta$ on $H$, can we find a spread-out $\Lambda$-absorbing probability measure $\tau$ on $\Gamma$ such that $\theta_\tau = \theta$ and the Poisson boundaries of $(\Gamma,\Lambda)$ and $(H,\theta)$ are measurably $\Gamma$-isomorphic?
\end{question}

If $\Lambda$ is a \emph{normal} subgroup of $\Gamma$ (in which case \emph{every} measure on $\Gamma$ is $\Lambda$-absorbing), Kaimanovich \cite[Theorem 1]{Kai1} has answered this question in the affirmative. 
The special case when $\Gamma = \Lambda$ was an influential conjecture of Furstenberg, and was solved independently by Kaimanovich and Vershik in \cite{KV} and by Rosenblatt in \cite{Ro}.

\vspace{0.2cm}

\noindent \textsc{When is $(B_{\theta_\tau},\nu_{\theta_\tau})$ a transitive Borel $H$-space?}
\vspace{0.2cm}

In Subsection \ref{subsec:transitive} (cf. Corollary \ref{cor_densesubgroupfactorhomo}) we show that the study of $\tau$-boundaries simplifies 
significantly if one knows that the $H$-action on $B_{\theta_\tau}$ is ($\nu_{\theta_\tau}$-essentially) transitive. Various criteria for when the 
Poisson boundary of a measured group is transitive was developed by Azencott \cite{Az} for semisimple Lie groups, and by Jaworski \cite{J1,J2}
for almost connected locally compact groups. On the other hand, we are only aware of two general criteria which (also) cover \emph{totally disconnected}
groups. We briefly summarize these criteria:
\vspace{0.1cm}
\begin{itemize}
\item \textbf{$(H,L)$ is a Gelfand pair} (the convolution algebra of compactly supported bi-$L$-invariant functions on $H$ is commutative). In this case, 
the compact group $L$ acts ergodically on $(B_{\theta_\tau},\nu_{\theta_\tau})$, and thus transitively (by compactness of $L$), whence $H$ acts transitively as well (The fact that $L$ acts ergocially follows from the arguments of Monod in \cite{Mo}, see also \cite[Chaper XII]{GuJiTa} for a related approach). An illustrative example of a Gelfand pair is
\[
H = \SL_2(\bQ_p) \qand L = \SL_2(\bZ_p), \quad \textrm{for a prime number $p$}.
\] 
We stress that it is not important in this criterion that $H$ is totally disconnected. \vspace{0.1cm}

\item \textbf{\textbf{$H$ fixes one of infinitely many ends}}. Suppose that $H$ is compactly generated, and that for a fixed compact generating set 
$S \subset H$, the Schreier graph $\cX = \cX(H,L,S)$ associated with triple $(H,L,S)$ has infinitely many ends (the vertices of this graph are $H/L$, and two vertices $xL$ and $yL$ are connected by an edge if the intersection $Lx^{-1}yL \cap S$ is non-empty). Let $\partial \cX$ denote the space of ends of 
$\cX$, and suppose that $H$ fixes an end $\omega$. Then, by \cite{Moll}, $H$ is non-unimodular, amenable and acts transitively on 
$\partial \cX \setminus \{\omega\}$. Furthermore, if $\theta_\tau$ is compactly supported and spread-out, and
\[
\int_H \log \Delta_H \, d\theta_\tau > 0,
\] 
where $\Delta_H$ denotes the modular function on $H$, then \cite[Theorem 6.12b)]{KaWo} asserts that there exists a (unique) $\theta_\tau$-stationary probability measure $\nu$ on $\partial \cX \setminus \{\omega\}$ such that $(\partial \cX \setminus \{\omega\},\nu)$ is the Poisson boundary of 
$(H,\theta_\tau)$.
\end{itemize}

\begin{question}
Are there other criteria for when $(B_{\theta_\tau},\nu_{\theta_\tau})$ is a transitive Borel $H$-space? 
\end{question}

\vspace{0.1cm}
\noindent \textsc{Non-amenable Baumslag-Solitar groups}\\

The Baumslag-Solitar group $\BS(p,q)$ is defined by
\[
\BS(p, q) = \langle a, b \, | \, ab^p a^{-1} = b^q \rangle, \quad \textrm{for non-zero integers $p,q$}.
\]
One readily checks that $(\BS(p,q),\langle b \rangle)$ is a Hecke pair, and an explicit completion triple $(H_{p,q},L_{p,q},\rho_{p,q})$ 
of this Hecke pair was first produced by Gal and Januszkiewicz in \cite{GJ}. \\

Let us fix integers $p$ and $q$ with $2 \leq |p| < |q|$, so that $\BS(p,q)$ is non-amenable, and let 
$\tau$ be a finitely supported $\langle b \rangle$-absorbing probability measure on $\BS(p,q)$. Under mild assumptions, Cuno and 
Sava-Huss \cite{CS} have proved that the space of ends $\Omega_{T_{p,q}}$ of the Bass-Serre tree $T_{p,q}$ associated with $(\BS(p,q),\langle b \rangle)$ is a compact and uniquely $\tau$-stationary model for the Poisson boundary of $(\BS(p,q),\tau)$. Let $\nu$ denote the unique $\tau$-stationary 
probabiluty measure on $\Omega_{T_{p,q}}$. Since $\langle b \rangle \cong \bZ$ is  amenable, \textsc{(IV)} in Corollary \ref{cor1_Thm2} tells us that the Poisson boundary of $(H_{p,q},\theta_{\tau})$ is measurably $\Gamma$-isomorphic to $(\Omega_{T_{p,q}},\nu)$, and thus every $\tau$-boundary is 
measurably $\Gamma$-isomorphic to a $\theta_\tau$-boundary, viewed as a $\Gamma$-space. 

\begin{question}
Is there a "reasonable" parameterization of the set of $\theta_\tau$-boundaries in this setting? 
\end{question}

\vspace{0.1cm}

\noindent \textsc{Anantharaman-Delaroche groups}\\

In the recent paper \cite{AD1}, Anantharaman-Delaroche constructs Hecke pairs $(\Gamma,\Lambda)$ and completion triples $(H,L,\rho)$ 
such that
\vspace{0.1cm}
\begin{itemize}
\item[(i)] $\Lambda$, and thus $\Gamma$, are non-amenable finitely generated groups. \vspace{0.1cm}
\item[(ii)] $\Lambda$ is co-amenable (and thus $H$ is amenable), compactly generated and admits a transitive action on a tree (with compact stabilizers). \vspace{0.1cm}
\item[(iii)] $\rho$ is injective.\vspace{0.1cm}
\end{itemize}
Let us fix such a Hecke pair $(\Gamma,\Lambda)$ and an associated completion triple $(H,L,\rho)$, and let $\theta$ be a \emph{symmetric}, bi-$L$-invariant, 
compactly generated and spred-out probability measure on $H$. By Theorem \ref{thm2}, there exists at least one $\Lambda$-absorbing probability measure $\tau$ on $\Gamma$ (which does not need to be symmetric) such that $\theta = \theta_\tau$. Since $\Gamma$ is non-amenable, the 
Poisson boundary $(B_\tau,\nu_\tau)$ must be non-trivial. However, by \cite[Theorem 6.12a)]{KaWo}, the symmetricity of $\theta_\tau$ forces the Poisson boundary of $(H,\theta_\tau)$ to be trivial. 

\begin{question}
Is $(B_\tau,\nu_\tau)$ a prime $(\Gamma,\tau)$-space (cf. Corollary \ref{cor3_thm2})?
\end{question}

\vspace{0.1cm}

%

\noindent \textsc{Higman-Thompson's group and the Neretin group} \\

We refer to \cite{CM} for definitions. Let $d \geq 2$ and $k \geq 1$, and denote by $\cT_{d,k}$ the unique rooted tree having $k$ vertices of 
level $1$, each of which is attached to an underlying $d$-regular tree. We write $\Gamma_{d,k}$ and $H_{d,k}$ for the Higman-Thompson group 
and the Neretin group associated with $\cT_{d,k}$ respectively. It is known that there is a injective homomorphism $\rho$ from $\Gamma_{d,k}$ into 
$H_{d,k}$ with a dense image, as well as a compact and open subgroup $L_{d,k}$ such that $\Lambda_{d,k} := \rho^{-1}(L_{d,k})$ is a locally finite 
(and thus amenable) subgroup of $\Gamma_{d,k}$. The following two questions seem to arise naturally.

\begin{question}
Suppose that $\tau$ is a finitely supported and spread-out $\Lambda_{d,k}$-absorbing probability measure on $\Gamma_{d,k}$. 
Does the Poisson boundary of $(\Gamma_{d,k},\tau)$ admit a uniquely $\tau$-stationary compact model (cf. \textsc{(IV)} in Corollary \ref{cor1_Thm2})?
\end{question}

\begin{question}
Is the Poisson boundary of $(H_{d,k},\theta_\tau)$ a transitive $H_{d,k}$-space?
\end{question}

\subsection{Acknowledgements}

This paper is part of H.O's doctoral thesis at Chalmers, under the supervision of M.B. Our collaboration was initiated during Y.H's visit to Chalmers in late August and early September of 2016. The authors are grateful to GoCas (Gothenburg's Center of Advanced Studies) for generously supporting this visit. The collaboration continued during M.B's and H.O's visits to Northwestern University in March and April 2017, as well as during H.O's visit to Ben Gurion University in Israel between January 2019 and August 2019. The authors are very grateful for the hospitality shown to us by these universities.

\section{Proof of Theorem \ref{thm1}}
\label{sec:proofThm1}

Let $\Gamma$ be a countable group $\Lambda$ a subgroup of $\Gamma$, and write $\alpha : \Gamma \ra \Gamma/\Lambda$ for the canonical quotient map. 
The induced map $\alpha_* : \Prob(\Gamma) \ra \Prob(\Gamma/\Lambda)$ is given by
\begin{equation}
\label{def_alphainduced}
\overline{\tau}(\gamma \Lambda) := \alpha_*\tau(\gamma \Lambda) = \sum_{\lambda \in \Lambda} \tau(\gamma \lambda), \quad \textrm{for $\gamma \Lambda \in \Gamma/\Lambda$}.
\end{equation}
We set $\Prob(\Gamma ; \Lambda) = \{ \tau \in \Prob(\Gamma) \, \mid \, \textrm{$\overline{\tau}$ is $\Lambda$-invariant}  \}$.

\subsection*{Proof of \textsc{(I)}}

Fix $\tau_1, \tau_2 \in \Prob(\Gamma ; \Lambda)$, and note that 
\vspace{0.1cm}
\begin{eqnarray}
\alpha_*(\tau_1 * \tau_2)(\gamma \Lambda) 
&=&
\sum_{\lambda \in \Lambda} (\tau_1 * \tau_2)(\gamma \lambda) = \sum_{\eta \in \Gamma} \sum_{\lambda \in \Lambda} \tau_1(\eta) \, \tau_2(\eta^{-1}\gamma \lambda) \nonumber \\[0.2cm]
&=&
\sum_{\eta \in \Gamma} \tau_1(\eta) \, \overline{\tau}_2(\eta^{-1}\gamma \Lambda) 
=
\sum_{\eta \Lambda} \sum_{\lambda' \in \Lambda}\tau_1(\eta \lambda') \, \overline{\tau}_2(\eta^{-1} \gamma \Lambda) \nonumber \\[0.2cm]
&=&
\sum_{\eta \Lambda} \overline{\tau}_1(\eta \Lambda) \, \overline{\tau}_2(\eta^{-1} \gamma \Lambda), \quad \textrm{for $\gamma \Lambda \in \Gamma/\Lambda$}, \label{leftinv}
\end{eqnarray}
where we in the second to last step have used that $\overline{\tau}_2$ is $\Lambda$-invariant. Since $\overline{\tau}_1$ is $\Lambda$-invariant, 
we conclude that $\alpha_*(\tau_1 * \tau_2)$ is $\Lambda$-invariant. In particular, $\tau_1 * \tau_2 \in \Prob(\Gamma ; \Lambda)$ as well.

\subsection*{Proof of \textsc{(II)}}

Suppose that $\Gamma$ acts by linear isometries on a Banach space $E$ and that the dual action  preserves a weak*-closed convex
subset $C$ of $E^*$. If $C^\Lambda$ is empty, there is nothing to prove, so let us assume that $C^\Lambda \neq \emptyset$, and pick $c \in C^\Lambda$.
Then, for every $\tau \in \Prob(\Gamma)$,
\begin{eqnarray*}
\tau * c 
&=&
\sum_{\gamma \in \Gamma} \tau(\gamma) \gamma c 
= 
\sum_{\gamma \Lambda} \sum_{\lambda \in \Lambda} \tau(\gamma \lambda) \gamma \lambda c
=
\sum_{\gamma \Lambda} \overline{\tau}(\gamma \Lambda) \gamma c.
\end{eqnarray*}
If $\tau$ is $\Lambda$-absorbing, then for every $\lambda \in \Lambda$
\begin{eqnarray*}
\lambda(\tau * c) 
&=& 
\sum_{\gamma \Lambda} \overline{\tau}(\gamma \Lambda) \lambda \gamma c
= 
\sum_{\gamma \Lambda} \overline{\tau}(\lambda \gamma \Lambda) \lambda \gamma c \\
&=& 
\sum_{\gamma \Lambda} \overline{\tau}(\gamma \Lambda) \gamma c = \tau * c,
\end{eqnarray*}
whence $\tau * C^\Lambda \subseteq C^\Lambda$. 

\subsection*{Proof of \textsc{(III)}}

Fix a right-inverse $\beta : \Gamma/\Lambda \ra \Gamma$ for the map $\alpha$. Then, for every $\gamma \in \Gamma$, we can write
\[
\gamma = \beta(\gamma \Lambda) \lambda_\gamma,
\]
for some \emph{unique} $\lambda_\gamma \in \Lambda$. In particular, given a probability measure $r$ on $\Lambda$, we can define an (affine)
right-inverse 
\[
\Prob(\Gamma/\Lambda) \ra \Prob(\Gamma), \enskip \widehat{\tau} \mapsto \widetilde{\tau}
\]
of the map $\alpha_*$ by setting
\[
\widetilde{\tau}(\gamma) = \widehat{\tau}(\gamma \Lambda) r(\lambda_\gamma), \quad \textrm{for $\gamma \in \Gamma$}.
\]
Note that
\[
\supp \widetilde{\tau} = \{\gamma \in \Gamma \, \mid \, \alpha(\gamma) \in \supp \widehat{\tau}, \enskip \lambda_\gamma \in \supp r \big\}.
\]
Let us now assume that $(\Gamma,\Lambda)$ is a Hecke pair. Then, since all $\Lambda$-orbits in $\Gamma/\Lambda$ are finite, we get a natural affine 
retraction
\[
\Prob(\Gamma/\Lambda) \ra \Prob(\Gamma/\Lambda)^\Lambda, \enskip \overline{\tau} \mapsto \widehat{\tau}
\]
upon averaging over the $\Lambda$-orbits. Clearly, $\supp \widehat{\tau} = \Lambda.\supp(\overline{\tau})$. Consider the (affine) composition 
\[
\Prob(\Gamma) \ra \Prob(\Gamma ; \Lambda), \enskip \tau \mapsto \widetilde{\tau},
\]
given by $\tau \mapsto \overline{\tau} := \alpha_*\tau \mapsto \widehat{\tau} \mapsto \widetilde{\tau}$. Then, since the support of $\overline{\tau}$
is $\alpha(\supp \tau)$, we see that
\begin{align}
\supp \widetilde{\tau} 
&=
\big\{ \gamma \in \Gamma \, \mid \, \gamma \Lambda \in \Lambda.\alpha(\supp \tau), \enskip \lambda_\gamma \in \supp r \big\} \label{eqsupp} \\[0.2cm]
&\subseteq 
\beta(\Lambda.\alpha(\supp \tau)) \supp r \label{inclsupp}
\end{align}
Fix a subset $S$ of $\Gamma$, and define 
\[
S_\Lambda = \{ \lambda_\gamma \in \Lambda \, \mid \, \gamma \in S \big\}. 
\]
Let us assume that 
the support of the probability measure $r$ above equals $S_\Lambda$, and set 
\[
\widetilde{S} = \beta(\Lambda.\alpha(S))S_\Lambda \subset \Gamma.
\]
If $S$ is finite, then $S_\Lambda$ and $\widetilde{S}$ are finite sets.
It follows from \eqref{inclsupp} that the map $\tau \mapsto \widetilde{\tau}$ restricts to an affine map
\[
\Prob(S) \ra \Prob(\Gamma;\Lambda) \cap \Prob(\widetilde{S}).
\]
Furthermore, if $\gamma \in \supp \tau \subset S$, then clearly 
\[
\gamma \Lambda \in \alpha(\supp \tau) \subset \Lambda.\alpha(\supp \tau) \qand \lambda_\gamma \in S_\Lambda,
\]
whence $\gamma \in \supp \widetilde{\tau}$ by \eqref{eqsupp} and since $r$ has full support on $S_\Lambda$. Since $\gamma$ is arbitrary, we conclude that $\supp \tau \subseteq \supp \widetilde{\tau}$.

\begin{remark}
If we assume that for every $\gamma \in \Gamma$, all elements of $\Lambda$ and $\gamma \Lambda \gamma^{-1}$ commute with each other (this is 
for instance the case for the Hecke pairs in Theorem \ref{thm3}), then there is another affine construction of 
$\Lambda$-absorbing probability measures on $\Gamma$ which can be constructed along the following lines. For every $\gamma \in \Gamma$, we choose sets of representatives 
$A_{\gamma \Lambda}$ and $B_{\gamma \Lambda}$ for the right- and left-quotients 
\[
\Lambda/\Lambda \cap \gamma \Lambda \gamma^{-1} \qand \Lambda \cap \gamma \Lambda \gamma^{-1} \backslash \gamma \Lambda \gamma^{-1}  
\]
respectively. Since all elements of $\Lambda$ and $\gamma \Lambda \gamma^{-1}$ are assumed to commute with each other, and thus
\[
\lambda \gamma \Lambda \gamma^{-1} \lambda^{-1} = \gamma \Lambda \gamma^{-1}, \quad \textrm{for all $\gamma \in \Gamma$ and $\lambda \in \Lambda$},
\]
we may assume that the maps $\gamma \mapsto A_{\gamma \Lambda}$ and $\gamma \mapsto B_{\gamma \Lambda}$ are left-$\Lambda$-invariant. \\

Given $\tau \in \Prob(\Gamma)$, we set
\[
\widetilde{\tau}(\gamma) = 
\frac{1}{|A_{\gamma \Lambda}|} \sum_{a \in A_{\gamma \Lambda}} \tau(a\gamma), \quad \textrm{for $\gamma \in \Gamma$},
\]
and note that $\tau \mapsto \widetilde{\tau}$ is affine, and 
\begin{eqnarray*}
\alpha_*\widetilde{\tau}(\gamma \Lambda) = \sum_{\lambda \in \Lambda} \widetilde{\tau}(\gamma \lambda) 
&=& 
\frac{1}{|A_{\gamma \Lambda}|} \sum_{a \in A_{\gamma \Lambda}} \sum_{\lambda \in \Lambda} \tau(a\gamma \lambda)
=
\frac{1}{|A_{\gamma \Lambda}|} \sum_{a \in A_{\gamma \Lambda}} \sum_{\lambda \in \gamma \Lambda \gamma^{-1}}  
\tau(a \lambda \gamma) \\[0.2cm]
&=& 
\frac{1}{|A_{\gamma \Lambda}|} \sum_{b \in B_{\gamma \Lambda}} \sum_{a \in A_{\gamma \Lambda}} 
\sum_{\lambda \in \Lambda \cap \gamma \Lambda \gamma^{-1}}  \tau(a \lambda b \gamma) \\[0.2cm]
&=&
\frac{1}{|A_{\gamma \Lambda}|} \sum_{b \in B_{\gamma \Lambda}} 
\sum_{\lambda \in \Lambda}  \tau(\lambda b \gamma).
\end{eqnarray*}
Since $B_{\gamma \Lambda} \subset \gamma \Lambda \gamma^{-1}$ and all elements of $\Lambda$ and $\gamma \Lambda \gamma^{-1}$
commute with each other, we have
\[
\alpha_*\widetilde{\tau}(\gamma \Lambda) =\frac{1}{|A_{\gamma \Lambda}|} \sum_{b \in B_{\gamma \Lambda}} 
\sum_{\lambda \in \Lambda}  \tau(b \lambda \gamma), \quad \textrm{for all $\gamma \Lambda \in \Gamma/\Lambda$},
\]
which is clearly a left-$\Lambda$-invariant expression since the maps $\gamma \mapsto A_{\gamma \Lambda}$ and $\gamma \mapsto B_{\gamma \Lambda}$ are left-$\Lambda$-invariant. 
\end{remark}


\section{Preliminaries on compact $G$-spaces with $\mu$-stationary measures}
\label{sec:prelcpt}
Let $G$ be a locally compact and second countable (lcsc) group and let $\mu$ be a probability measure on $G$. 
Let $M$ be a compact and metrizable space, and denote by $\Prob(M)$ the space of Borel probability measures on $M$, 
equipped with the (compact and metrizable) weak*-topology. If $G$ acts jointly continuously by homeomorphisms
on $M$, then we say that $M$ is a \emph{compact $G$-space}. In this case, $G$ also acts jointly continuously by homemorphisms on
$\Prob(M)$, and we say that $\eta \in \Prob(M)$ is \emph{$\mu$-stationary} if $\mu * \eta = \eta$. 

We denote by
$\Prob_\mu(M)$ the set of all $\mu$-stationary probability measures on $M$, and write 
$\Prob_\mu^{\textrm{erg}}(M)$ and $\Prob_\mu^{\textrm{ext}}(M)$ for the subsets of ergodic and extremal measures in
$\Prob_\mu(M)$ respectively.

\begin{lemma}
\label{lemma_basicprops}
Let $\mu$ be a spread-out probability measure on $G$ and let $M$ be a compact $G$-space.
\vspace{0.1cm}
\begin{itemize}
\item[\textsc{(i)}] Every $\mu$-stationary probability measure on $M$ is $G$-quasi-invariant. \vspace{0.2cm}
\item[\textsc{(ii)}] $\Prob^{\textrm{erg}}_\mu(M) = \Prob^{\textrm{ext}}_\mu(M) \neq \emptyset$. \vspace{0.2cm}
\item[\textsc{(iii)}] If $\eta$ and $\eta'$ are ergodic $\mu$-stationary probability measures and if $\eta'$ is absolutely 
continuous with respect to $\eta$, then $\eta = \eta'$.
\end{itemize}
\end{lemma}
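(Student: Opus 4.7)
The plan is to treat the three items in order, relying on standard random walk arguments.

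For \textsc{(i)}, the plan is to consider the Borel subset
$$L = \{ g \in G : g_*\eta \sim \eta \} \subseteq G.$$
From the relation $\mu * \eta = \eta$, viewed as $\int g_*\eta \, d\mu(g) = \eta$, any $\eta$-null set is $g_*\eta$-null for $\mu$-a.e.\ $g$. Iterating this with the convolution powers $\mu^{*n}$, and using that $\mu$ is spread-out so that the semigroup generated by $\supp(\mu)$ is $G$, one obtains that the set $H = \{g : g_*\eta \ll \eta\}$ has full Haar measure in $G$. Since $T_g : M \to M$ is a bijection, one easily checks that $H$ is closed under inverses, so $L = H$ is a subgroup, and the Steinhaus lemma then forces $L = G$.

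For \textsc{(ii)}, non-emptiness of $\Prob_\mu(M)$ follows from Markov--Kakutani applied to the continuous affine self-map $\nu \mapsto \mu * \nu$ on the metrizable compact convex set $\Prob(M)$, and Krein--Milman then yields extremal points. The equality $\Prob_\mu^{\textrm{ext}}(M) = \Prob_\mu^{\textrm{erg}}(M)$ is standard and amounts to the usual correspondence between extremality in the Choquet simplex of $\mu$-stationary probability measures and triviality of the $\sigma$-algebra of $P_\mu$-invariant Borel subsets of $M$ modulo the stationary measure, where $P_\mu f(x) = \int f(gx) \, d\mu(g)$.

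For \textsc{(iii)}, the plan is an extraction-of-common-part argument. Set $h = d\eta'/d\eta$ and define the positive measure $\tilde{\eta} := (h \wedge 1) \cdot \eta$ of mass $c := \int (h \wedge 1) \, d\eta \in (0, 1]$. Since $\tilde{\eta} \leq \eta$ and $\tilde{\eta} \leq \eta'$, the $\mu$-stationarity of $\eta$ and $\eta'$ gives $\mu * \tilde{\eta} \leq \tilde{\eta}$, and equality of total masses upgrades this to $\mu * \tilde{\eta} = \tilde{\eta}$. Thus $\tilde{\eta}/c$ is a $\mu$-stationary probability measure, and when $c < 1$, so is $(\eta - \tilde{\eta})/(1 - c)$; combining these yields a non-trivial convex decomposition of $\eta$, contradicting its extremality. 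Hence $c = 1$, which forces $h \leq 1$ $\eta$-a.e., and then $h \equiv 1$ since $\int h \, d\eta = 1$.

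The main subtlety lies in \textsc{(i)}, in the transition from $g_*\eta \ll \eta$ for $\mu^{*n}$-a.e.\ $g$ (for each $n$) to $g_*\eta \ll \eta$ for Haar-a.e.\ $g \in G$: this requires exploiting that the essential supports of the densities of the $\mu^{*n}$ exhaust $G$ up to a Haar-null set, which is a consequence of the spread-out assumption on $\mu$, and then invoking Steinhaus in the right form to pass from a subgroup of full Haar measure to all of $G$.
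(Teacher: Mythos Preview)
Your treatments of \textsc{(ii)} and \textsc{(iii)} are correct and essentially what one finds in the references the paper cites; the common-part argument for \textsc{(iii)} is clean. The paper itself simply points to \cite{NZ} and \cite{BS}, so there is nothing to compare at the level of strategy.

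For \textsc{(i)}, however, your outline has a real gap and a minor error. The minor error: the set $H=\{g:g_*\eta\ll\eta\}$ is a subsemigroup, but it is \emph{not} closed under inverses in general. From $g_*\eta\ll\eta$ one only deduces $\eta\ll (g^{-1})_*\eta$, which is the wrong direction. (This is easily repaired by working with $H\cap H^{-1}$ once $H$ is known to have full Haar measure.) The genuine gap is the quantifier swap: from ``for each $\eta$-null $A$ and each $n$, $g_*\eta(A)=0$ for $\mu^{*n}$-a.e.\ $g$'' you cannot conclude that $H$ has full measure, because $H$ is an uncountable intersection over all null sets. The toy example $\eta=\mathrm{Leb}$ on $[0,1]$, $\nu_\omega=\delta_\omega$, $\int\nu_\omega\,d\omega=\eta$ shows that a barycenter identity alone does not force almost-sure absolute continuity. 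Your final paragraph flags the difficulty but the proposed fix (exhaustion of essential supports plus Steinhaus) does not close it.

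The standard argument avoids $H$ altogether. Set $\tilde\mu=\sum_{n\ge1}2^{-n}\mu^{*n}$; the spread-out hypothesis gives $\tilde\mu\sim m_G$, and $\tilde\mu*\eta=\eta$. For any fixed $h\in G$ one has $\delta_h*\tilde\mu\ll\tilde\mu$, say $d(\delta_h*\tilde\mu)=\phi_h\,d\tilde\mu$. Now if $\eta(A)=0$, then $\int g_*\eta(A)\,d\tilde\mu(g)=0$, so $g_*\eta(A)=0$ for $\tilde\mu$-a.e.\ $g$; hence
\[
h_*\eta(A)=\big((\delta_h*\tilde\mu)*\eta\big)(A)=\int g_*\eta(A)\,\phi_h(g)\,d\tilde\mu(g)=0.
\]
This gives $h_*\eta\ll\eta$ for \emph{every} $h\in G$, with no quantifier swap and no Steinhaus needed.
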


\begin{proof}
 \textsc{(ii)} is proved in \cite[Lemma 1.1]{NZ}. The identity in \textsc{(I)} is contained in \cite[Corollary 2.7]{BS}, while the assertion of 
 non-emptiness is an immediate consequence of Kakutani's fixed point theorem. \textsc{(III)} is 
proved in \cite[Proposition 2.6:(2)]{BS}.
\end{proof}

\begin{corollary}
\label{cor_mu1mu2}
Let $\mu_1$ and $\mu_2$ be spread-out probability measures on $G$ and let $M$ be a compact $G$-space. 
Suppose that $\mu_1 * \mu_2 = \mu_2 * \mu_1$. Then $\Prob_{\mu_1}(M) = \Prob_{\mu_2}(M)$.
\end{corollary}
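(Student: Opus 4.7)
My plan is to prove the inclusion $\Prob_{\mu_1}(M) \subseteq \Prob_{\mu_2}(M)$; the reverse inclusion follows by symmetry since the commutation hypothesis is symmetric in $\mu_1$ and $\mu_2$. The first reduction I would make is to ergodic $\mu_1$-stationary measures: by the ergodic decomposition of $\mu_1$-stationary probability measures on the standard Borel space $M$, together with Lemma \ref{lemma_basicprops}(ii) which identifies ergodic with extremal $\mu_1$-stationary measures, it suffices to prove that every $\eta \in \Prob^{\textrm{erg}}_{\mu_1}(M)$ lies in $\Prob_{\mu_2}(M)$.

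Fix such an $\eta$ and set $\eta' := \mu_2 * \eta$. I would first observe two key properties of $\eta'$. The commutativity hypothesis yields $\mu_1 * \eta' = (\mu_1 * \mu_2) * \eta = (\mu_2 * \mu_1) * \eta = \mu_2 * \eta = \eta'$, so $\eta'$ is itself $\mu_1$-stationary. Furthermore, Lemma \ref{lemma_basicprops}(i) tells us that $\eta$ is $G$-quasi-invariant, so $g_*\eta \ll \eta$ for every $g \in G$; averaging against $\mu_2$ then gives $\eta' \ll \eta$.

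To conclude $\eta' = \eta$, I would apply the ergodic decomposition once more, this time to $\eta'$, writing $\eta' = \int_T \eta'_t \, d\lambda(t)$ with each $\eta'_t$ ergodic $\mu_1$-stationary. Since the Borel $\sigma$-algebra on $M$ is countably generated, a standard argument shows that $\eta'_t \ll \eta'$ for $\lambda$-a.e.\ $t$, and therefore $\eta'_t \ll \eta$ for $\lambda$-a.e.\ $t$. By Lemma \ref{lemma_basicprops}(iii), this forces $\eta'_t = \eta$ for $\lambda$-a.e.\ $t$, so $\eta' = \eta$, proving that $\eta$ is $\mu_2$-stationary.

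I do not anticipate a serious obstacle: the proof is essentially an orchestration of the three conclusions of Lemma \ref{lemma_basicprops}, with commutativity used only to check that $\mu_2 * \eta$ remains $\mu_1$-stationary. The one technical point requiring care is the claim that the components of the ergodic decomposition of $\eta'$ are absolutely continuous with respect to $\eta'$ itself; this is a routine consequence of standard ergodic decomposition theory on standard Borel spaces and should not require new ideas.
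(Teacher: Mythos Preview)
Your proof is correct and follows essentially the same approach as the paper: reduce to ergodic $\eta \in \Prob_{\mu_1}^{\mathrm{erg}}(M)$, set $\eta' = \mu_2 * \eta$, use commutativity to see $\eta'$ is $\mu_1$-stationary, use quasi-invariance to get $\eta' \ll \eta$, and conclude $\eta' = \eta$ via Lemma~\ref{lemma_basicprops}(iii). The only difference is that the paper applies (iii) directly without decomposing $\eta'$ into ergodic pieces, implicitly using that only ergodicity of $\eta$ is needed (if $\eta' \ll \eta$ with $\eta$ ergodic and both $\mu_1$-stationary, the density $d\eta'/d\eta$ is an invariant function, hence constant); your extra decomposition step is a harmless way to match the literal hypotheses of (iii).
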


\begin{proof}
By \textsc{(II)} in Lemma \ref{lemma_basicprops} it suffices to show that $\Prob^{\textrm{erg}}_{\mu_1}(M) \subset \Prob_{\mu_2}(M)$. To prove this inclusion, 
fix an ergodic $\eta \in \Prob_{\mu_1}(M)$ and set $\eta' := \mu_2 * \eta$. Since $\mu_1$ is spread-out, 
$\eta$ is $G$-quasi-invariant by \textsc{(I)} in Lemma \ref{lemma_basicprops}, and thus $\eta' \ll \eta$. Note that since $\mu_1 * \mu_2 = \mu_2 * \mu_1$, 
\[
\mu_1 * \eta' = \mu_1 * \mu_2 * \eta = \mu_2 * \mu_1 * \eta = \mu_2 * \eta = \eta',
\]
whence $\eta'$ is $\mu_1$-stationary and absolutely continuous with respect to $\eta$, and thus $\eta' = \eta$ by \textsc{(iii)} in Lemma \ref{lemma_basicprops}. This shows that $\eta \in \Prob_{\mu_2}(M)$, and we are done.
\end{proof}

\subsection{Conditional measures and the canonical quasi-factor}
\label{subsec:canonicalquasi}

\begin{lemma} \cite[Theorem 2.10]{BS}
\label{lemma_conditional}
Let $\mu$ be a probability measure on $G$, $M$ a compact $G$-space and $\eta$ a $\mu$-stationary probability measure on $M$.
There is a $\mu^{\bN}$-conull subset $\Omega_\eta \subset G^{\bN}$ and a measurable map 
\[
\Omega \ra \Prob(M), \enskip \omega \mapsto \eta_\omega,
\]
such that $\lim_n \omega_1 \cdots \omega_n \eta = \eta_\omega$, for all $\omega = (\omega_1,\omega_2,\ldots) \in \Omega_\eta$, where the 
limit is taken in the weak*-topology on $\Prob(M)$.
\end{lemma}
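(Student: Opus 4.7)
The plan is to prove the existence of the limits $\eta_\omega$ via a martingale convergence argument applied to a separating countable family of test functions, and then to assemble the pointwise limits into a probability measure via the Riesz representation theorem.

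First I would introduce the standard path space notation: set $\Omega = G^{\bN}$ with the product measure $\bP := \mu^{\bN}$, write $g_n(\omega) = \omega_1 \omega_2 \cdots \omega_n$ for the random walk trajectory, and let $(\cF_n)_{n \geq 0}$ be the filtration generated by the first $n$ coordinate projections. For each fixed $f \in C(M)$, I would consider the sequence of random variables
\[
X_n^f(\omega) := \int_M f \, d(g_n(\omega)\eta), \quad n \geq 0,
\]
and check that $(X_n^f)$ is an $(\cF_n)$-adapted martingale bounded by $\|f\|_\infty$. The martingale identity follows from the $\mu$-stationarity of $\eta$: conditioning on $\cF_{n-1}$ and integrating the fresh increment $\omega_n$ against $\mu$ yields
\[
\bE[X_n^f \mid \cF_{n-1}](\omega) = \int_G \!\int_M \! f \, d\big(g_{n-1}(\omega) g \eta\big)\, d\mu(g) = \int_M f \, d\big(g_{n-1}(\omega) (\mu*\eta)\big) = X_{n-1}^f(\omega).
\]
By Doob's martingale convergence theorem, for each $f \in C(M)$ there exists a $\bP$-conull set $\Omega_f \subset \Omega$ on which $X_n^f(\omega)$ converges to a finite limit $X_\infty^f(\omega)$ bounded by $\|f\|_\infty$.

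Next, since $M$ is compact metrizable, $C(M)$ is separable; pick a countable $\bQ$-linear dense subset $\cD \subset C(M)$ containing the constant function $1$ and closed under the lattice operations $\max,\min$. Set
\[
\Omega_\eta := \bigcap_{f \in \cD} \Omega_f,
\]
which is $\bP$-conull. For $\omega \in \Omega_\eta$, the map $f \mapsto X_\infty^f(\omega)$ is a positive, $\bQ$-linear, norm-bounded functional on $\cD$ with $X_\infty^1(\omega)=1$, and by the density of $\cD$ it extends uniquely by continuity to a positive linear functional of norm one on all of $C(M)$. The Riesz representation theorem then yields a probability measure $\eta_\omega \in \Prob(M)$ with
\[
\int_M f \, d\eta_\omega = X_\infty^f(\omega), \quad f \in \cD.
\]
Since $\cD$ is dense in $C(M)$ and the finite-step averages $\int f \, d(g_n(\omega)\eta)$ are equicontinuous in $f$ (bounded by $\|f\|_\infty$), a standard $3\eps$-argument gives convergence $\int f\, d(g_n(\omega)\eta) \to \int f\, d\eta_\omega$ for \emph{every} $f \in C(M)$, which is precisely the weak*-convergence $g_n(\omega)\eta \to \eta_\omega$ asserted in the statement.

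Finally I would verify measurability of $\omega \mapsto \eta_\omega$: it suffices to check that $\omega \mapsto \int f\, d\eta_\omega$ is measurable for every $f \in C(M)$, and on $\Omega_\eta$ this is a pointwise limit of the manifestly measurable functions $\omega \mapsto X_n^f(\omega)$. Extending $\eta_\omega$ arbitrarily (say to a fixed reference measure) off $\Omega_\eta$ gives a measurable map on all of $\Omega$. I do not anticipate a serious obstacle in the argument; the only point that demands care is ensuring that the \emph{single} null set outside which convergence fails works uniformly over all of $C(M)$, which is handled by the separability of $C(M)$ together with the uniform bound $|X_n^f| \leq \|f\|_\infty$.
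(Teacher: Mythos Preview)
The paper does not supply its own proof of this lemma; it simply cites \cite[Theorem 2.10]{BS} and uses the result as a black box. Your argument is the standard martingale proof of the existence of conditional (Furstenberg) measures, and it is correct as written: stationarity gives the martingale property, Doob gives almost-sure convergence for each test function, separability of $C(M)$ allows a single conull exceptional set, and Riesz assembles the pointwise limits into $\eta_\omega$. This is essentially the argument one finds in the cited reference (and in Furstenberg's original papers), so there is nothing to compare.
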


\begin{remark}
We stress that we do not assume that $\mu$ is a spread-out probability measure on $G$.
\end{remark}

\begin{definition}[Conditional measures]
The map $\omega \mapsto \eta_\omega$ in Lemma \ref{lemma_conditional} is called the \emph{conditional measure map} associated
with $(M,\eta)$, and the measures $(\eta_\omega)$ are called \emph{conditional measures}. 
\end{definition}

In what follows, let $\mu$ be a probability measure on $G$ and let $(M,\eta)$ be a compact $(G,\mu)$-space. Given a bounded Borel function 
$f$ on $M$, we define 
\[
\widehat{f}(\beta) = \beta(f), \quad \textrm{for $\beta \in \Prob(M)$}.
\]
We note that if $f$ is continuous (Borel measurable), then $\widehat{f}$ is continuous (Borel measurable) with respect to the weak*-topology 
on $\Prob(M)$. Moreover, 
\begin{equation}
\label{quasifactor}
\int_M f \, d\eta = \int_{G^{\bN}} \widehat{f}(\eta_\omega) \, d\mu^{\otimes \bN}(\omega), \quad \textrm{for all $f \in C(M)$},
\end{equation}
where $\omega \mapsto \eta_\omega$ denotes the conditional measure map associated with $(M,\eta)$. A straightforward approximation
argument in $L^1(\eta)$ also shows that \eqref{quasifactor} holds for every bounded Borel function on $M$.

\begin{definition}[The canonical quasi-factor]
The probability measure $\eta^*$ on $\Prob(M)$ defined by
\[
\eta^*(F) = \int_{G^{\bN}} F(\eta_\omega) \, d\mu^{\otimes \bN}(\omega), \quad \textrm{for $F \in C(\Prob(M))$},
\]
where $\omega \mapsto \eta_\omega$ is the conditional measure map associated with $(M,\eta)$, is called the
\emph{canonical quasi-factor} of $(M,\eta)$.
\end{definition}

\begin{remark}
We note that \eqref{quasifactor} says that $\eta^*(\widehat{f}) = \eta(f)$ for all $f \in C(M)$, which in particular implies that $(\Prob(M),\eta^*)$ is 
a $\mu$-stationary quasi-factor of $(M,\eta)$ in the sense of Furstenberg and Glasner (see e.g. \cite[Section 1]{FuGl}).
\end{remark}

It is not hard to see that the set of all $\widehat{f}$, as $f$ ranges over $C(M)$, separates points in $\Prob(M)$, whence the *-algebra 
generated by such functions is dense in $C(\Prob(M))$ by Stone-Weierstrass Theorem. In particular, $\eta^*$, is completely determined by
all expressions of the form $\eta^*(\widehat{f}_1 \cdots \widehat{f}_k)$, where $k \geq 1$ and $f_1,\ldots,f_k$ is a $k$-tuple
in $C(M)$.  The following lemma provides a technique to evaluate these expression. 

\begin{lemma}
\label{lemma_canonicalquasifactor}
For every $k \geq 1$, and for all bounded Borel functions $f_1,\ldots,f_k$ on $M$, 
\[
\eta^*(\widehat{f}_1 \cdots \widehat{f}_k) = \lim_{n \ra \infty} (\mu^{*n} * \eta^{\otimes k})(f_1 \otimes \cdots \otimes f_k).
\]
\end{lemma}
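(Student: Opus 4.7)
\textbf{Proof plan for Lemma \ref{lemma_canonicalquasifactor}.}

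The strategy is to unpack both sides in terms of the iterates $\omega_1\cdots\omega_n\eta$, recognise the LHS as the a.s. weak${}^*$-limit of the integrand appearing on the RHS, and close the loop by bounded convergence.

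First I would treat the case $f_1,\ldots,f_k \in C(M)$. By the definition of $\eta^*$ together with the fact that $\widehat{f}_i(\beta)=\beta(f_i)$, the LHS equals
\[
\eta^*(\widehat{f}_1\cdots\widehat{f}_k) \;=\; \int_{G^{\bN}} \prod_{i=1}^k \eta_\omega(f_i)\, d\mu^{\otimes\bN}(\omega).
\]
By Lemma \ref{lemma_conditional}, for $\mu^{\otimes\bN}$-a.e.\ $\omega$ the probability measures $\omega_1\cdots\omega_n\eta$ converge to $\eta_\omega$ in the weak${}^*$-topology on $\Prob(M)$; consequently, for each continuous $f_i$,
\[
(\omega_1\cdots\omega_n\eta)(f_i)\longrightarrow \eta_\omega(f_i)\qquad\text{a.s.},
\]
and the product $\prod_i (\omega_1\cdots\omega_n\eta)(f_i)$ is dominated by $\prod_i\|f_i\|_\infty$. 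Dominated convergence on $(G^{\bN},\mu^{\otimes\bN})$ then gives
\[
\eta^*(\widehat{f}_1\cdots\widehat{f}_k) = \lim_{n\to\infty} \int_{G^{\bN}} \prod_{i=1}^k (\omega_1\cdots\omega_n\eta)(f_i)\, d\mu^{\otimes\bN}(\omega).
\]

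Next I would identify this last integral with the desired quantity. Since the integrand depends only on $(\omega_1,\ldots,\omega_n)$, the integral against $\mu^{\otimes\bN}$ reduces to one against $\mu^{\otimes n}$; pushing forward along the multiplication map $G^n\to G$ (whose image of $\mu^{\otimes n}$ is $\mu^{*n}$ by definition) converts this into $\int_G \prod_i (g\eta)(f_i)\, d\mu^{*n}(g)$. Recognising the product as the value of the tensor measure on the tensor function, i.e.\ $\prod_i (g\eta)(f_i)=(g\eta)^{\otimes k}(f_1\otimes\cdots\otimes f_k)$ with $g$ acting diagonally on $M^k$, this equals $(\mu^{*n}*\eta^{\otimes k})(f_1\otimes\cdots\otimes f_k)$, establishing the identity for continuous $f_i$.

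Finally, to pass to bounded Borel $f_i$, I would use a monotone class argument applied variable by variable. Fix $f_2,\ldots,f_k\in C(M)$; for each $\omega$ the measure $\eta_\omega$ and each iterate $\omega_1\cdots\omega_n\eta$ are Borel probability measures on $M$, so $f_1\mapsto \prod_i \eta_\omega(f_i)$ and $f_1\mapsto \prod_i (\omega_1\cdots\omega_n\eta)(f_i)$ are bounded linear functionals in $f_1$ that agree for continuous $f_1$ (by the previous paragraph, integrated against $\mu^{\otimes\bN}$ and limited in $n$). The crucial point is to exchange $\lim_n$ with the Borel-extension; this is the step that is not purely formal. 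To handle it, I would observe that for every bounded Borel $f_1$ on $M$ the process $M_n(\omega):=(\omega_1\cdots\omega_n\eta)(f_1)$ is a bounded martingale on $(G^{\bN},\mu^{\otimes\bN})$ with respect to the coordinate filtration — the martingale identity $\bE[M_{n+1}|\mathcal{F}_n]=M_n$ is immediate from $\mu*\eta=\eta$. Doob's theorem then delivers an a.s.\ and $L^1$ limit $M_\infty$, and an $L^1(\eta)$-approximation of $f_1$ by continuous functions identifies $M_\infty$ with $\eta_\omega(f_1)$ a.s. Iterating this replacement for $f_2,\ldots,f_k$ in turn, the computation for continuous functions transfers verbatim to bounded Borel $f_i$.

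The main obstacle is precisely this last extension: weak${}^*$-convergence of $\omega_1\cdots\omega_n\eta$ to $\eta_\omega$ tests only continuous functions, and one needs the martingale interpretation (valid thanks to $\mu$-stationarity of $\eta$) to upgrade this to convergence of $\omega\mapsto (\omega_1\cdots\omega_n\eta)(f)$ in $L^1(\mu^{\otimes\bN})$ for bounded Borel $f$. Once this upgrade is in place, the rest of the proof is a routine sequence of applications of Fubini and bounded convergence.
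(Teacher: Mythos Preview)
Your argument is correct. The treatment of continuous $f_i$ is identical to the paper's. For the passage to bounded Borel functions, however, you take a genuinely different route.

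The paper proceeds by an $L^1(\eta)$-approximation and telescoping: it writes the difference between the Borel and approximating continuous expressions as a sum of terms, and bounds each term using the key observation that every one-dimensional marginal of $\mu^{*n}*\eta^{\otimes k}$ equals $\eta$ (by $\mu$-stationarity). This yields error bounds \emph{uniform in $n$}, so the continuous case transfers directly. Your approach instead exploits the martingale structure: $n\mapsto(\omega_1\cdots\omega_n\eta)(f)$ is a bounded martingale, so Doob gives a.s.\ convergence for each bounded Borel $f$, and an $L^1(\eta)$-approximation identifies the limit as $\eta_\omega(f)$. This upgrades the weak${}^*$-convergence of Lemma~\ref{lemma_conditional} from continuous to Borel test functions, after which the original dominated convergence argument applies unchanged. (Your ``iterating'' remark is not really needed: once a.s.\ convergence holds for each $f_i$ separately, the product converges a.s.\ and one applies dominated convergence once.)

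Both arguments ultimately rest on the same ingredient---stationarity of $\eta$---but use it differently: the paper via the marginals of $\mu^{*n}*\eta^{\otimes k}$, you via the martingale identity. Your route is slightly more conceptual and makes explicit the mechanism behind the conditional measures; the paper's is more self-contained in that it avoids invoking Doob's theorem.
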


\begin{proof}
Fix $k \geq 1$. Let us first consider the case when $f_1,\ldots,f_k$ are continuous. We note that
\begin{eqnarray*}
(\mu^{*n} * \eta^{\otimes k})(f_1 \otimes \cdots \otimes f_k) 
&=&
\int_{G} g \eta(f_1) \cdots g \eta(f_k) \, d\mu^{*n}(g) \\[0.2cm]
&=&
\int_{G^{\bN}} (\omega_1 \cdots \omega_n \eta)(f_1) \cdots (\omega_1 \cdots \omega_n) \eta(f_k) \, d\mu^{\bN}(\omega).
\end{eqnarray*}
By Lemma \ref{lemma_conditional}, 
\[
\eta_\omega(f) = \lim_{n \ra \infty} (\omega_1 \cdots \omega_n \eta)(f), \quad \textrm{for all $f \in C(M)$},
\]
$\mu^{\bN}$-almost surely, whence by dominated convergence
\[
\lim_{n \ra \infty} 
\int_{G^{\bN}} (\omega_1 \cdots \omega_n \eta)(f_1) \cdots (\omega_1 \cdots \omega_n) \eta(f_k) \, d\mu^{\bN}(\omega)
=
\int_{G^{\bN}} \eta_\omega(f_1) \cdots \eta_\omega(f_k) \, d\mu^{\bN}(\omega).
\]
We now note that
\[
\int_{G^{\bN}} \eta_\omega(f_1) \cdots \eta_\omega(f_k) \, d\mu^{\bN}(\omega) \\[0.2cm]
=
\int_{G^{\bN}} \widehat{f}_1(\eta_\omega) \cdots \widehat{f}_k(\eta_\omega) \, d\mu^{\bN}(\omega) \\[0.2cm]
=
\eta^*(\widehat{f}_1 \cdots \widehat{f}_k),
\]
which proves the lemma for continuous functions. \\

To prove the lemma in general, pick a $k$-tuple $(f_1,\ldots,f_k)$ of bounded Borel functions on $M$. For every $\eps > 0$, we can find 
a $k$-tuple $(f_{1,\eps},\ldots,f_{k,\eps})$ of continuous functions on $M$ such that
\[
\|f_{i,\eps}\|_\infty \leq \|f_{i}\|_\infty \qand \|f_{i} - f_{i,\eps}\|_{L^1(\eta)} < \eps, \quad \textrm{for all $i = 1,\ldots,k$}.
\]
We write
\begin{eqnarray*}
\eta^*(\widehat{f}_1 \cdots \widehat{f}_k)
&=&
\eta^*(\widehat{f}_{1,\eps} \cdots \widehat{f}_{k,\eps}) \\[0.2cm]
&+&
\sum_{j=0}^{k-1} \eta^*\Big( \big( \prod_{i=1}^j \widehat{f}_{i,\eps} \big) \cdot \big(\widehat{f}_{j+1} - \widehat{f}_{j+1,\eps}\big) \cdot \big( \prod_{i=j+2}^k \widehat{f}_{i}\big) \Big),
\end{eqnarray*}
and
\begin{eqnarray*}
(\mu^{*n} * \eta^{\otimes k}\big)(f_1 \otimes \cdots \otimes f_k)
&=&
(\mu^{*n} * \eta^{\otimes k}\big)(f_{1,\eps} \otimes \cdots \otimes f_{k,\eps}) \\[0.2cm]
&+&
\sum_{j=0}^{k-1} (\mu^{*n} * \eta^{\otimes k}\big)\Big( \big( \bigotimes_{i=1}^j f_{i,\eps} \big) \otimes \big(f_{j+1} - f_{j+1,\eps}\big) \otimes \big( \bigotimes_{i=j+2}^k f_{i}\big) \Big),
\end{eqnarray*}
for all $n \geq 1$, with the convention that products or tensor products over empty sets are equal to one. By \eqref{quasifactor}, 
\[
\big| \eta^*(\widehat{f}_{j+1} - \widehat{f}_{j+1,\eps}) \big| \leq \|f_{j+1} - f_{j+1,\eps}\|_{L^1(\eta)} < \eps, 
\]
for all $j=1,\ldots,k-1$, whence 
\[
\big|\eta^*(\widehat{f}_1 \cdots \widehat{f}_k)
-
\eta^*(\widehat{f}_{1,\eps} \cdots \widehat{f}_{k,\eps})
\big|
\leq \eps \sum_{j=1}^k \prod_{i \neq j} \|f_{i,\eps}\|_\infty.
\]
Since $\eta$ is $\mu$-stationary, the marginal measures of $\mu^{*n} * \eta^{\otimes k}$ are all equal to $\eta$, and thus
\[
\big|
(\mu^{*n} * \eta^{\otimes k}\big)(f_1 \otimes \cdots \otimes f_k)
-
(\mu^{*n} * \eta^{\otimes k}\big)(f_{1,\eps} \otimes \cdots \otimes f_{k,\eps})
\big|
\leq
 \eps \sum_{j=1}^k \prod_{i \neq j} \|f_{i,\eps}\|_\infty,
\]
for all $n \geq 1$. Since $\eps > 0$ and $n$ are arbitrary, and since 
\[
\lim_n (\mu^{*n} * \eta^{\otimes k}\big)(f_{1,\eps} \otimes \cdots \otimes f_{k,\eps})
=
\eta^*(\widehat{f}_{1,\eps} \cdots \widehat{f}_{k,\eps}),
\]
(since $f_{1,\eps},\ldots,f_{k,\eps}$ are continuous), we conclude that 
\[
\lim_n (\mu^{*n} * \eta^{\otimes k}\big)(f_{1} \otimes \cdots \otimes f_{k}) = \eta^*(\widehat{f}_1 \cdots \widehat{f}_k).
\]
\end{proof}

\subsection{Proximality and couplings}

Let $G$ be a lcsc group and $\mu$ a probability measure on $G$. A $\mu$-stationary probability measure $\eta$ on $M$
is \emph{$\mu$-proximal} if the conditional measure map $\omega \mapsto \eta_\omega$, restricted to a $\mu^{\bN}$-conull subset of $\Omega_\eta$, takes values in the set of point measures 
on $M$. 

\begin{definition}[Couplings]
Let $k \geq 2$ be an integer. A Borel probability measure $\kappa$ on $M^k$ is a \emph{$k$-coupling} of $(M,\eta)$ if the push-forward
of $\kappa$ to every $M$-factor equals $\eta$. We denote by $\eta_{\Delta_k}$ the $k$-coupling of $(M,\eta)$ which is supported on
the $k$-diagonal in $M^k$. 
\end{definition}

\begin{remark}
Every probability measure $\eta$ on $M$ has two "trivial" $2$-couplings: the product measure $\eta \otimes \eta$ and the $2$-diagonal
measure $\eta_{\Delta_2}$. If $\eta$ is $\mu$-stationary, then $\eta_{\Delta_2}$ is always $\mu$-stationary, but $\eta \otimes \eta$ mostly 
fails to be $\mu$-stationary (unless $\eta$ is $G$-invariant).
\end{remark}

\begin{lemma}
\label{lemma_proxtransfer}
Let $\mu$ be a probability measure on $G$, $M$ a compact $G$-space and $\eta$ a $\mu$-stationary probability measure on $M$. The following conditions are equivalent.
\vspace{0.1cm}
\begin{enumerate}
\item[\textsc{(I)}] $\eta$ is $\mu$-proximal. \vspace{0.2cm}
\item[\textsc{(II)}] $\eta_{\Delta_2}$ is the only $\mu$-stationary $2$-coupling of $(M,\eta)$.  \vspace{0.2cm}
\item[\textsc{(III)}] $\lim_{n} \mu^{*n} * \eta^{\otimes 4} = \eta_{\Delta_4}$. \vspace{0.2cm}
\item[\textsc{(IV)}] $\lim_{n} \mu^{*n} * \eta^{\otimes k} = \eta_{\Delta_k}$, for all $k \geq 2$.
\end{enumerate}
\end{lemma}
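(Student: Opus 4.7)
The plan is to establish the chain (I)$\Rightarrow$(IV)$\Rightarrow$(III)$\Rightarrow$(I), and then close the loop with (I)$\Leftrightarrow$(II). Throughout, the engine will be Lemma \ref{lemma_canonicalquasifactor}, which identifies $\lim_n (\mu^{*n} * \eta^{\otimes k})(f_1 \otimes \cdots \otimes f_k)$ with $\eta^*(\widehat{f}_1 \cdots \widehat{f}_k) = \int_{G^{\bN}} \eta_\omega(f_1)\cdots \eta_\omega(f_k)\, d\mu^{\bN}(\omega)$ for bounded Borel $f_i$.

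\medskip

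\emph{(I)$\Rightarrow$(IV).} If $\eta_\omega = \delta_{x(\omega)}$ for $\mu^{\bN}$-a.e.\ $\omega$, then for continuous $f_1,\ldots,f_k$ on $M$,
\[
\int_{G^{\bN}} \eta_\omega(f_1)\cdots \eta_\omega(f_k)\, d\mu^{\bN}(\omega) = \int_{G^{\bN}} \eta_\omega(f_1 \cdots f_k)\, d\mu^{\bN}(\omega) = \int_M f_1\cdots f_k\, d\eta = \eta_{\Delta_k}(f_1 \otimes \cdots \otimes f_k),
\]
where the second equality uses that $\eta$ is the $\mu^{\bN}$-barycenter of the conditional measures. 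Combined with Lemma \ref{lemma_canonicalquasifactor}, this gives weak* convergence of $\mu^{*n}*\eta^{\otimes k}$ to $\eta_{\Delta_k}$ on all tensor products of continuous functions. Since such tensor products span a dense subalgebra of $C(M^k)$ by Stone--Weierstrass, the convergence extends to all of $C(M^k)$. The implication (IV)$\Rightarrow$(III) is trivial.

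\medskip

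\emph{(III)$\Rightarrow$(I).} Pick any continuous $f : M \to \bR$ and apply Lemma \ref{lemma_canonicalquasifactor} with $k=4$ and $f_1=f_2=f$, $f_3=f_4=1$. The hypothesis forces
\[
\int_{G^{\bN}} \eta_\omega(f)^2\, d\mu^{\bN}(\omega) = \int_M f^2\, d\eta = \int_{G^{\bN}} \eta_\omega(f^2)\, d\mu^{\bN}(\omega),
\]
where the second equality is the barycenter identity. The integrand $\eta_\omega(f^2) - \eta_\omega(f)^2 \geq 0$ by Cauchy--Schwarz, so it vanishes $\mu^{\bN}$-almost surely. Running this over a countable, point-separating family of continuous functions $f$, we conclude that $\eta_\omega$ is a Dirac mass for $\mu^{\bN}$-a.e.\ $\omega$.

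\medskip

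\emph{(I)$\Rightarrow$(II).} Let $\kappa$ be a $\mu$-stationary $2$-coupling of $(M,\eta)$ on $M^2$, and let $\omega \mapsto \kappa_\omega$ be the conditional measure map of Lemma \ref{lemma_conditional} applied to $(M^2,\kappa)$. Since the projections $\pi_i : M^2 \to M$ are continuous and $G$-equivariant with $(\pi_i)_*\kappa = \eta$, we have $\omega_1\cdots\omega_n(\pi_i)_*\kappa = (\pi_i)_*(\omega_1\cdots\omega_n \kappa) \to (\pi_i)_*\kappa_\omega$, whence $(\pi_i)_*\kappa_\omega = \eta_\omega$ $\mu^{\bN}$-almost surely. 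Under (I), $\eta_\omega = \delta_{x(\omega)}$, so the only coupling is $\kappa_\omega = \delta_{(x(\omega),x(\omega))}$, and integrating against $\mu^{\bN}$ yields $\kappa = \eta_{\Delta_2}$.

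\medskip

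\emph{(II)$\Rightarrow$(I).} By Lemma \ref{lemma_canonicalquasifactor} the weak* limit $\kappa_2 := \lim_n \mu^{*n}*\eta^{\otimes 2}$ exists; it is clearly $\mu$-stationary, and its marginals are $\lim_n \mu^{*n} * \eta = \eta$. Thus $\kappa_2$ is a $\mu$-stationary $2$-coupling, and (II) forces $\kappa_2 = \eta_{\Delta_2}$. Applying the variance argument from (III)$\Rightarrow$(I) (now with $k=2$) finishes the proof.

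\medskip

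The main delicate point will be the compatibility statement used in (I)$\Rightarrow$(II): that the conditional measures of a $\mu$-stationary coupling push forward under coordinate projections to the conditional measures of each marginal. This is the one place where we need a functoriality property of the conditional measure construction beyond the bare statement of Lemma \ref{lemma_conditional}, and it is handled cleanly by the continuity of $\pi_i$ together with uniqueness of weak* limits. The remaining implications are bookkeeping on top of Lemma \ref{lemma_canonicalquasifactor} and the Cauchy--Schwarz variance identity.
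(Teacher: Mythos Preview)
Your proof is correct, and the route differs from the paper's in one instructive way. The paper runs the cycle (I)$\Rightarrow$(II)$\Rightarrow$(IV)$\Rightarrow$(III)$\Rightarrow$(I); you run (I)$\Rightarrow$(IV)$\Rightarrow$(III)$\Rightarrow$(I) directly and append (I)$\Leftrightarrow$(II). The substantive divergence is in (III)$\Rightarrow$(I). The paper proves $\int |\eta_\omega(f_if_j)-\eta_\omega(f_i)\eta_\omega(f_j)|^2\,d\mu^{\bN}=0$ by expanding the square into three terms governed by $\kappa_2,\kappa_3,\kappa_4$, which is precisely why $k=4$ shows up in condition (III). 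Your variance argument $\int(\eta_\omega(f^2)-\eta_\omega(f)^2)\,d\mu^{\bN}=0$ is cleaner and, as you yourself exploit in (II)$\Rightarrow$(I), really only needs $\kappa_2=\eta_{\Delta_2}$; your choice $f_3=f_4=1$ just embeds this $k=2$ computation inside the $k=4$ hypothesis. So you have in effect shown that the $k=2$ analogue of (III) already characterizes proximality, which is a mild sharpening. Your (I)$\Rightarrow$(II) matches the paper's, and your direct (I)$\Rightarrow$(IV) replaces the paper's step (II)$\Rightarrow$(IV), where the latter argues inductively that a $\mu$-stationary $k$-coupling all of whose $2$-marginals are diagonal must itself be diagonal.
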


\begin{remark}
While the equivalence between \textsc{(I)} and \textsc{(II)} is fairly standard (see e.g. \cite{Fur3} for related results), the equivalence between
\textsc{(I)} and \textsc{(III)} seems to be new (at least we have not been able to find an explicit reference in the literature).
\end{remark}

\begin{proof}
(\textsc{I) $\implies$ \textsc{(II)}}: Suppose that $\eta$ is $\mu$-proximal, and pick a $\mu$-stationary self-coupling $\kappa$ of the 
$(G,\mu)$-space $(M,\eta)$.
Then $\kappa_{\omega}$ is almost surely a self-coupling of $\eta_\omega$, whence equal to $\eta_\omega \otimes \eta_\omega$ since 
$\eta_\omega$ is a point measure. Since 
\[
\eta = \int_{\Omega_\eta} \eta_\omega \, d\mu^{\otimes \bN}(\omega) 
\qand 
\kappa = \int_{\Omega_\eta} \eta_\omega \otimes \eta_\omega \, d\mu^{\otimes \bN}(\omega),
\]
we conclude that $\kappa = \eta_{\Delta_2}$. \\

(\textsc{II) $\implies$ \textsc{(IV)}}: Fix $k \geq 2$, and note that 
\[
\kappa_k := \lim_{n} \mu^{*n} * \eta^{\otimes k} = \lim_n \int_{\Omega_\eta} (z_n(\omega)\eta)^{\otimes k} \, d\mu^{\otimes \bN}(\omega) = 
\int_{\Omega_\eta} \eta_\omega^{\otimes k} \, d\mu^{\otimes \bN}(\omega),
\]
where $z_n(\omega) = \omega_1 \cdots \omega_n$ for $\omega \in \Omega_\eta$, is a $\mu$-stationary $k$-coupling  of $(M,\eta)$ (the limit exists by Lemma \ref{lemma_conditional}). In particular,
$\kappa_2 = \eta_{\Delta_2}$ by \textsc{(ii)}, and if $k > 2$, then the push-forward of $\kappa_k$ to every $M \times M$-factor 
must equal $\eta_{\Delta_2}$.  By induction, we conclude that $\kappa_k = \eta_{\Delta_k}$. \\

(\textsc{IV) $\implies$ \textsc{(III)}}: Trivial. \\

(\textsc{III) $\implies$ \textsc{(I)}}: To prove that $\eta_\omega$ is almost surely a point measure, it suffices to show that there is a 
$\bP_\mu$-conull subset $\Omega \subset G^{\bN}$ such that 
\begin{equation}
\label{pointmeas}
\eta_\omega(f_1 f_2) = \eta_\omega(f_1) \, \eta_\omega(f_2), \quad \textrm{for all $\omega \in \Omega$ and $f_1, f_2 \in C(M)$}.
\end{equation}
Since $M$ is metrizable, $C(M)$ equipped with the uniform norm, is a separable Banach space. Let $(f_i)$ be a countable norm-dense subset of $C(M)$. To prove \eqref{pointmeas}, 
it is clearly enough to show that
\[
\int_{G^{\bN}} \big| \eta_\omega(f_i f_j) - \eta_\omega(f_i) \, \eta_\omega(f_j) \big|^2 \, d\mu^{\otimes \bN}(\omega) = 0, \quad \textrm{for all $i,j$}.
\]
Upon expanding the square, we see that this amounts to proving:
\begin{equation}
\label{tobeproved_nuomega3}
\kappa_2(f_if_j \otimes f_if_j) - 2 \kappa_3(f_i f_j \otimes f_i \otimes f_j) + \kappa_4(f_i \otimes f_i \otimes f_j \otimes f_j) = 0, 
\end{equation}
for all $i$ and $j$, where
\[
\kappa_k = \int_{G^{\bN}} \eta_\omega^{\otimes k} \, d\mu^{\otimes \bN}(\omega), \quad \textrm{for $k = 2,3,4$}.
\]
Since we assume that $\lim_n \mu^{*n} * \eta^{\otimes 4} = \eta_{\Delta_4}$, we have
\begin{eqnarray*}
\kappa_4 
&=& 
\int_{G^{\bN}} \eta_\omega^{\otimes 4} \, d\mu^{\otimes \bN}(\omega) \\
&=& 
\lim_n \int_{G^{\bN}} (z_n(\omega)_*\eta)^{\otimes 4} \, d\mu^{\otimes \bN}(\omega) \\
&=& 
\lim_n \mu^{*n} * \eta^{\otimes 4} = \eta_{\Delta_4},
\end{eqnarray*}
and thus $\kappa_3 = \eta_{\Delta_3}$ and $\kappa_2 = \eta_{\Delta_2}$ as well. We conclude that 
\[
\kappa_2(f_i f_j \otimes f_if_j) = \kappa_3(f_i f_j \otimes f_i \otimes f_j) = \kappa_4(f_i \otimes f_i \otimes f_j \otimes f_j) = \eta(f_i^2 \, f_j^2),
\]
for all $i$ and $j$, from which \eqref{tobeproved_nuomega3} readily follows, and we are done.
\end{proof}

The following corollary is an immediate consequence of the equivalence between the conditions \textsc{(I)} and \textsc{(III)} in the lemma above.

\begin{corollary}
\label{cor_4power}
Let $\mu_1$ and $\mu_2$ be probability measures on $G$, $M$ a compact $G$-space and $\eta$ a probability measure on $M$ which is both 
$\mu_1$-stationary and $\mu_2$-stationary. Suppose that 
\[
\mu_1^{*n} * \eta^{\otimes 4} = \mu_2^{*n} * \eta^{\otimes 4}, \quad  \textrm{for all $n$}. 
\]
Then $\eta$ is $\mu_1$-proximal if and only if it is $\mu_2$-proximal.
\end{corollary}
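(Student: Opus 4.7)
\textbf{Proof plan for Corollary \ref{cor_4power}.}

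The plan is to exploit the characterization of $\mu$-proximality given by the equivalence \textsc{(I)} $\iff$ \textsc{(III)} in Lemma \ref{lemma_proxtransfer}, which states that a $\mu$-stationary probability measure $\eta$ on a compact $G$-space $M$ is $\mu$-proximal if and only if the sequence $(\mu^{*n} * \eta^{\otimes 4})_{n \geq 1}$ converges weakly to the $4$-diagonal measure $\eta_{\Delta_4}$ on $M^4$. The decisive observation is that this characterization depends only on the sequence $(\mu^{*n} * \eta^{\otimes 4})_{n \geq 1}$; so if two probability measures $\mu_1, \mu_2$ produce literally the same sequence, they must produce the same answer to the question of $\mu$-proximality of $\eta$.

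Concretely, I would proceed as follows. First, since $\eta$ is assumed to be both $\mu_1$-stationary and $\mu_2$-stationary, Lemma \ref{lemma_proxtransfer} applies to each of the pairs $(\mu_1,\eta)$ and $(\mu_2,\eta)$. Next, I would invoke the hypothesis
\[
\mu_1^{*n} * \eta^{\otimes 4} = \mu_2^{*n} * \eta^{\otimes 4} \quad \textrm{for all $n \geq 1$}
\]
to conclude that the two sequences of probability measures on $M^4$ are identical; hence one converges weakly to a given limit if and only if the other does, and in that case the limits coincide. In particular,
\[
\lim_n \mu_1^{*n} * \eta^{\otimes 4} = \eta_{\Delta_4} \iff \lim_n \mu_2^{*n} * \eta^{\otimes 4} = \eta_{\Delta_4}.
\]
Applying the equivalence \textsc{(I)} $\iff$ \textsc{(III)} of Lemma \ref{lemma_proxtransfer} to each side yields that $\eta$ is $\mu_1$-proximal if and only if $\eta$ is $\mu_2$-proximal, which is the desired conclusion.

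There is no real obstacle here, since all of the analytic content has already been absorbed into Lemma \ref{lemma_proxtransfer}. The only point worth noting is that the nontrivial direction \textsc{(III)} $\implies$ \textsc{(I)} of that lemma is what makes it possible to certify $\mu$-proximality by looking only at the $4$-th tensor power of $\eta$, which in turn is what allows the hypothesis of the corollary to be stated so economically.
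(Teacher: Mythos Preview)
Your proposal is correct and follows exactly the approach of the paper, which simply observes that the corollary is an immediate consequence of the equivalence \textsc{(I)} $\iff$ \textsc{(III)} in Lemma \ref{lemma_proxtransfer}.
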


We stress that we did not assume in the previous corollary that the measures $\mu_1$ and $\mu_2$ are spread-out. However, in the next corollary,
we shall assume that they are. 

\begin{corollary}
\label{cor_PBcommute}
Let $\mu_1$ and $\mu_2$ be spread-out probability measures on $G$, $M$ a compact $G$-space and $\eta$ a probability measure on $M$
which is both $\mu_1$-stationary and $\mu_2$-stationary. Suppose that $\mu_1 * \mu_2 = \mu_2 *\mu_1$. Then $\eta$ is $\mu_1$-proximal if 
and only if it is $\mu_2$-proximal. 
\end{corollary}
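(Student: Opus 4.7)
The plan is to deduce this as a fairly direct consequence of Corollary \ref{cor_mu1mu2} combined with the equivalence $\textsc{(I)} \iff \textsc{(II)}$ in Lemma \ref{lemma_proxtransfer}. The key observation is that $\mu_i$-proximality is characterized purely in terms of the set of $\mu_i$-stationary $2$-couplings of $(M,\eta)$, and the property of being a $2$-coupling of $(M,\eta)$ is an intrinsic property of the measure (it depends only on its marginals being $\eta$), independent of which $\mu_i$ one considers.

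First I would apply Corollary \ref{cor_mu1mu2} to the compact $G$-space $M \times M$ equipped with the diagonal $G$-action. Since $\mu_1$ and $\mu_2$ are spread-out on $G$ and still satisfy $\mu_1 * \mu_2 = \mu_2 * \mu_1$, the corollary gives the identity
\[
\Prob_{\mu_1}(M \times M) = \Prob_{\mu_2}(M \times M).
\]
A $\mu_i$-stationary $2$-coupling of $(M,\eta)$ is by definition an element of $\Prob_{\mu_i}(M \times M)$ both of whose marginals coincide with $\eta$. Since $\eta$ is simultaneously $\mu_1$-stationary and $\mu_2$-stationary, the marginal condition is insensitive to the choice of $\mu_i$, and the identity above immediately yields that the set of $\mu_1$-stationary $2$-couplings of $(M,\eta)$ equals the set of $\mu_2$-stationary $2$-couplings of $(M,\eta)$.

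Finally, by the equivalence of conditions \textsc{(I)} and \textsc{(II)} in Lemma \ref{lemma_proxtransfer}, $\eta$ is $\mu_i$-proximal if and only if the only $\mu_i$-stationary $2$-coupling of $(M,\eta)$ is $\eta_{\Delta_2}$, for $i=1,2$. Since the two sets of stationary $2$-couplings have been shown to coincide, the condition of containing only $\eta_{\Delta_2}$ holds for $i=1$ if and only if it holds for $i=2$, which completes the proof. The only nontrivial input is recognizing that the marginal condition defining a coupling is preserved under the identification of stationary measure sets, so there is no serious obstacle to this approach beyond correctly quoting Corollary \ref{cor_mu1mu2} applied to $M \times M$.
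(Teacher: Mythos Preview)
Your proposal is correct and follows essentially the same approach as the paper: apply Corollary~\ref{cor_mu1mu2} to the diagonal $G$-space $M \times M$ to identify the sets of $\mu_1$- and $\mu_2$-stationary $2$-couplings, and then invoke the equivalence \textsc{(I)}~$\iff$~\textsc{(II)} in Lemma~\ref{lemma_proxtransfer}. The paper phrases this as a one-direction argument (take a $\mu_2$-stationary coupling, note it is $\mu_1$-stationary, conclude it equals $\eta_{\Delta_2}$), but the content is identical to yours.
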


\begin{proof}
We shall assume that $\eta$ is $\mu_1$-proximal and prove that $\eta_{\Delta_2}$ is the only 
$\mu_2$-stationary self-coupling of $(M,\eta)$. In view of the equivalence between \textsc{(I)} and \textsc{(II)} in Lemma \ref{lemma_proxtransfer},
this will finish the proof of the corollary. We fix a $\mu_2$-stationary self-joining $\kappa$ of $(M,\eta)$. By Corollary \ref{cor_mu1mu2} applied to the compact $G$-space $M \times M$, $\kappa$ is also $\mu_1$-stationary, and thus equal to $\eta_{\Delta_2}$ since $\eta$ is $\mu_1$-proximal.
\end{proof}


\section{Preliminaries on Borel $(G,\mu)$-spaces}
\label{sec:prelBorel}

Let $G$ be a lcsc group, $\mu$ a probability measure on $G$, and $(X,\xi)$ a Borel $(G,\mu)$-space. We shall always assume that 
our spaces are \emph{standard}. The space $\cH^\infty(G,\mu)$
of bounded \emph{$\mu$-harmonic functions on $G$} is defined by
\[
\cH^\infty(G,\mu) = \{ \varphi \in \cL^\infty(G) \, : \, \varphi * \mu = \varphi \big\}.
\]
If $\mu$ is spread-out, $\cH^\infty(G,\mu) \subset C_b(G)$. We denote by $P_\xi : L^\infty(X,\xi) \ra \cH^\infty(G,\mu)$ the
\emph{Poisson transform of $(X,\xi)$}, which is defined by
\[
P_\xi f(g) = \int_X f(gx) \, d\xi(x), \quad \textrm{for $g \in G$ and $f \in L^\infty(X,\xi)$}.
\]
If $(X',\xi')$ is another Borel $(G,\mu)$-space, and there are $G$-invariant conull subsets $X_o \subset X$ and $X'_o \subset X'$,
and a Borel $G$-map $\pi : X_o \ra X_o'$ such that the measure class of the push-forward measure $\pi_*(\xi \mid_{X_o})$ equals the 
measure class of $\xi'$, we say that $\pi$ is a \emph{measurable $G$-map}. If moreover, $\pi_*(\xi \mid_{X_o})
= \xi'_{X_o'}$, we say that $\pi$ is \emph{measure-preserving}. To make notation less heavy, we shall suppress the dependences on the subsets $X_o$ and $X_o'$, and simply write $\pi : (X,\xi) \ra (X',\xi')$ if $\pi$ is a measurable $G$-map, and then emphasize if this map is measure-preserving or not. 
If $\pi$ is measure-preserving, we also say that $(X',\xi')$ is a \emph{$G$-factor} of $(X,\xi)$, and if $\pi$ admits a measurable inverse, which is a measure-preserving measurable $G$-map, we say that the Borel $(G,\mu)$-spaces $(X,\xi)$ and $(X',\xi')$ are \emph{measurably $G$-isomorphic}.

\subsection{Compact models}

\begin{definition}[Compact model]
Let $(X,\xi)$ be a Borel $(G,\mu)$-space, $M$ a compact $G$-space and $\eta$ a $\mu$-stationary probability measure on $M$. 
If $(M,\eta)$ is measurably $G$-isomorphic to $(X,\xi)$, we say that $(M,\eta)$ is a \emph{compact model} of $(X,\xi)$.
\end{definition}

\begin{proposition}
\label{prop_cptmodel}
Let $(X,\xi)$ be a Borel $(G,\mu)$-space.
\begin{itemize}
\item[\textsc{(i)}] There exists a compact model for $(X,\xi)$. \vspace{0.2cm}
\item[\textsc{(ii)}] If $(X',\xi')$ is a Borel $(G,\mu)$-space, and $\pi : (X,\mu) \ra (X',\xi')$ is a measure-preserving $G$-map, and 
$(M,\eta)$ and $(M',\eta')$ are compact models for $(X,\xi)$ and $(X',\xi')$ respectively, then there is a 
Borel $G$-map $\sigma: M \ra M'$, which intertwines $\pi$, such that
\[
\sigma_*\eta = \eta' \qand \sigma_*\eta_\omega = \eta'_\omega, \quad \textrm{for $\mu^{\otimes \bN}$-almost every $\omega$},
\]
where $\omega \mapsto \eta_\omega$ and $\omega \mapsto \eta'_\omega$ denote the conditional measure maps associated with $\eta$ and $\eta'$
respectively.
\end{itemize}
\end{proposition}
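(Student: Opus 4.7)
The strategy is standard Gelfand duality. First, pick a countable family $\cF_0$ of bounded Borel functions on $X$ that separates points of $X$ modulo $\xi$-null sets (available since $X$ is standard Borel). Fix a countable dense subgroup $D \leq G$ and let $A$ be the smallest unital separable $C^*$-subalgebra of $L^\infty(X,\xi)$ containing $D\cdot\cF_0$. By construction $A$ is $D$-invariant and norm-separable. By Gelfand duality $A \cong C(M)$ for a compact metrizable $M$, and the state $\psi(f) := \int_X f\,d\xi$ corresponds to a probability measure $\eta \in \Prob(M)$; the inclusion $A \hookrightarrow L^\infty(X,\xi)$, being weak*-dense and separating points a.e., induces a Borel $D$-equivariant injection $\Phi: X \to M$ defined on a conull set, with $\Phi_*\xi=\eta$. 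The remaining issue---upgrading the $D$-action on $M$ to a jointly continuous $G$-action compatible with $\Phi$---is the main technical point. One may settle it by invoking the Mackey--Varadarajan theorem (every Borel $G$-action on a standard Borel space is Borel isomorphic to a continuous action on a compact metric $G$-space) applied to the measurable $G$-space $(M,\eta)$, after first verifying that the $G$-action inherited from the full $G$-action on $L^\infty(X,\xi)$ extends the $D$-action in the Borel sense. The stationarity $\eta=\mu*\eta$ is then the translation of $\xi=\mu*\xi$ through $\Phi$, tested on $A$.

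\textbf{Plan for (ii).} By (i), there are measurable $G$-isomorphisms $\Phi:(X,\xi)\xrightarrow{\sim}(M,\eta)$ and $\Phi':(X',\xi')\xrightarrow{\sim}(M',\eta')$, defined on $G$-invariant conull subsets. Set
\[
\sigma := \Phi' \circ \pi \circ \Phi^{-1},
\]
a Borel $G$-map $M \to M'$ defined on a $G$-invariant conull subset of $M$, intertwining $\pi$. Since $\Phi,\Phi'$ are measure-preserving and $\pi_*\xi=\xi'$, we get $\sigma_*\eta=\eta'$ at once.

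\textbf{Plan for the conditional-measure identity (main obstacle).} We must show $\sigma_*\eta_\omega=\eta'_\omega$ for $\mu^{\otimes\bN}$-a.e.\ $\omega$. Since $C(M')$ is separable, it suffices to prove, for each fixed $f'\in C(M')$, that $\eta_\omega(f'\circ\sigma)=\eta'_\omega(f')$ almost surely. The subtle point is that $\phi:=f'\circ\sigma$ is only Borel, so the weak*-convergence $\omega_1\cdots\omega_n\eta\to\eta_\omega$ of Lemma \ref{lemma_conditional} does not directly evaluate on $\phi$. To remedy this, I would use the "cocycle" identity $\eta_\omega = \omega_1\cdots\omega_n\,\eta_{T^n\omega}$ (where $T$ is the shift on $G^\bN$), which yields, for any bounded Borel $\phi$ on $M$,
\[
\int_M \phi(\omega_1\cdots\omega_n x)\,d\eta(x) \;=\; \bE\bigl[\omega\mapsto\eta_\omega(\phi)\,\bigm|\,\cF_n\bigr](\omega),
\]
where $\cF_n$ is generated by the first $n$ coordinates. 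By L\'evy's martingale convergence theorem the right-hand side tends to $\eta_\omega(\phi)$ almost surely. Applying this to $\phi=f'\circ\sigma$, then using $G$-equivariance of $\sigma$ on a $G$-invariant conull set to rewrite $f'(\sigma(\omega_1\cdots\omega_n x))=f'(\omega_1\cdots\omega_n\sigma(x))$, and finally pushing the integral forward by $\sigma_*\eta=\eta'$, one obtains
\[
\eta_\omega(f'\circ\sigma)=\lim_n\int_{M'} f'(\omega_1\cdots\omega_n y)\,d\eta'(y)=\eta'_\omega(f'),
\]
the last equality being the weak*-convergence of Lemma \ref{lemma_conditional} applied to the \emph{continuous} function $f'$. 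Taking a countable dense set of $f'\in C(M')$ and intersecting the corresponding conull subsets yields $\sigma_*\eta_\omega=\eta'_\omega$ almost surely, completing the proof.
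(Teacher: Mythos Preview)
Your proposal is correct. The paper itself does not give a proof of this proposition at all: it simply cites \cite[Theorem 2.1]{BS} for \textsc{(i)} and the first assertion of \textsc{(ii)}, and \cite[Corollary 2.7]{BS} for the conditional-measure identity. Your write-up therefore supplies genuine content where the paper defers to an external reference.

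A couple of remarks on the details. For \textsc{(i)}, your route through a countable dense $D\leq G$ followed by an appeal to Mackey--Varadarajan works, but it is slightly circuitous; the cleanest way to get a jointly continuous $G$-action on $M$ is to generate $A$ not by $D$-translates of a separating family but by functions in $L^\infty(X,\xi)$ on which the full $G$-action is already norm-continuous (e.g.\ obtained by convolving against $C_c(G)$), so that $A$ is automatically $G$-invariant with norm-continuous $G$-action. For \textsc{(ii)}, your martingale argument is exactly the right idea and is essentially how \cite[Corollary 2.7]{BS} is proved: the key point, which you identify correctly, is that the weak*-convergence in Lemma~\ref{lemma_conditional} only tests continuous functions, whereas $f'\circ\sigma$ is merely Borel, and the martingale identity $\bE[\eta_\bullet(\phi)\mid\cF_n](\omega)=(\omega_1\cdots\omega_n\eta)(\phi)$ together with L\'evy's theorem is precisely what upgrades the convergence to bounded Borel test functions.
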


\begin{proof}
\textsc{(i)} and the first part of \textsc{(ii)} are stated, with appropriate references, in \cite[Theorem 2.1]{BS}, while the second part of 
\textsc{(ii)} is proved in \cite[Corollary 2.7]{BS}.
\end{proof}

The key point of Proposition \ref{prop_cptmodel} is that every Borel $(G,\mu)$-space can be endowed with a (measure class) of conditional
measure maps. Indeed, if $(M,\eta)$ is a compact model for this space, then there is a conditional measure map by Lemma \ref{lemma_conditional}, 
and if $(M',\eta')$ is another compact model, then the induced measure-preserving $G$-map between $(M,\eta)$ and $(M',\eta')$ (which is Borel), 
pushes the conditional measure map for $(M,\eta)$ to the conditional measure map for $(M',\eta')$ (modulo $\mu^{\bN}$-null sets) by 
\textsc{(ii)} in Proposition \ref{prop_cptmodel}. 

\subsection{Proximal Borel $(G,\mu)$-spaces}

\begin{definition}[$\mu$-boundary]
We say that a Borel $(G,\mu)$-space $(X,\xi)$ is \emph{$\mu$-proximal} (or is a \emph{$\mu$-boundary}) if every compact model of $(X,\xi)$ is $\mu$-proximal.
\end{definition}

\begin{remark}
By Proposition \ref{prop_cptmodel}, between any two different compact models of $(X,\xi)$, there is a measure-preserving Borel 
$G$-isomorphism, which maps the conditional measures to conditional measures (modulo $\mu^{\bN}$-null sets), so in particular
the notion of $\mu$-proximality for Borel $(G,\mu)$-spaces is well-defined (and restricts to $\mu$-proximality for compact $G$-spaces).
\end{remark}

By passing to compact models in Lemma \ref{lemma_proxtransfer}, Corollary \ref{cor_4power} and Corollary \ref{cor_PBcommute}, we 
can now immediately formulate the following results.

\begin{proposition}
\label{prop_proximalktuples}
Let $G$ be a lcsc group and $\mu$ a probability measure on $G$. Let $(X,\xi)$ be a $\mu$-proximal Borel $(G,\mu)$-space. 
Then, for every $k \geq 2$ and for every $k$-tuple $(f_1,\ldots,f_k)$ of bounded Borel functions on $X$,
\[
\lim_n \, (\mu^{*n} * \xi^{\otimes k})(f_1 \otimes \cdots \otimes f_k) = \xi(f_1 \cdots f_k).
\]
\end{proposition}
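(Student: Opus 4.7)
The plan is to transfer the problem to a compact model and then invoke Lemma \ref{lemma_proxtransfer}, followed by an $L^1$-approximation to pass from continuous test functions to bounded Borel ones.

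First, by \textsc{(i)} of Proposition \ref{prop_cptmodel}, choose a compact model $(M,\eta)$ of $(X,\xi)$, so there exist $G$-invariant conull subsets $X_o \subset X$, $M_o \subset M$ and a measure-preserving Borel $G$-isomorphism $\Phi : X_o \to M_o$. For any bounded Borel $f_i$ on $X$, define $\tilde f_i := f_i \circ \Phi^{-1}$ on $M_o$ and extend by $0$ off $M_o$. Then $\tilde f_i$ is a bounded Borel function on $M$ with $\|\tilde f_i\|_\infty \leq \|f_i\|_\infty$, and for every $n$,
\[
(\mu^{*n} * \xi^{\otimes k})(f_1 \otimes \cdots \otimes f_k) = (\mu^{*n} * \eta^{\otimes k})(\tilde f_1 \otimes \cdots \otimes \tilde f_k), \qquad \xi(f_1 \cdots f_k) = \eta(\tilde f_1 \cdots \tilde f_k),
\]
because $\Phi$ is $G$-equivariant on $X_o$ (modulo a null set) and pushes $\xi$ to $\eta$. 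Thus the proposition reduces to the compact-model statement: $\lim_n (\mu^{*n}*\eta^{\otimes k})(\tilde f_1 \otimes \cdots \otimes \tilde f_k) = \eta(\tilde f_1 \cdots \tilde f_k)$.

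Since $(X,\xi)$ is a $\mu$-boundary, by definition its compact model $(M,\eta)$ is $\mu$-proximal, so Lemma \ref{lemma_proxtransfer}, \textsc{(i)} $\Rightarrow$ \textsc{(iv)}, gives $\mu^{*n} * \eta^{\otimes k} \to \eta_{\Delta_k}$ in the weak* topology on $\Prob(M^k)$. This establishes the desired limit whenever $\tilde f_1, \ldots, \tilde f_k$ are \emph{continuous} on $M$, because $\eta_{\Delta_k}(f_1 \otimes \cdots \otimes f_k) = \eta(f_1 \cdots f_k)$.

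The remaining step, handling bounded Borel $\tilde f_i$, proceeds exactly as in the last part of the proof of Lemma \ref{lemma_canonicalquasifactor}. For $\varepsilon > 0$, pick continuous $f_{i,\varepsilon}$ on $M$ with $\|f_{i,\varepsilon}\|_\infty \leq \|\tilde f_i\|_\infty$ and $\|\tilde f_i - f_{i,\varepsilon}\|_{L^1(\eta)} < \varepsilon$. Since each marginal of $\mu^{*n} * \eta^{\otimes k}$ equals $\eta$ (because $\eta$ is $\mu$-stationary), a telescoping argument gives
\[
\bigl| (\mu^{*n}*\eta^{\otimes k})(\tilde f_1 \otimes \cdots \otimes \tilde f_k) - (\mu^{*n}*\eta^{\otimes k})(f_{1,\varepsilon} \otimes \cdots \otimes f_{k,\varepsilon}) \bigr| \leq \varepsilon \sum_{j=1}^k \prod_{i \neq j} \|\tilde f_i\|_\infty,
\]
uniformly in $n$, and the analogous bound holds for $|\eta(\tilde f_1 \cdots \tilde f_k) - \eta(f_{1,\varepsilon} \cdots f_{k,\varepsilon})|$. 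Combining these with the continuous case and letting first $n \to \infty$ and then $\varepsilon \to 0$ yields the claim. The only mildly nontrivial point is this last approximation step, but it is routine once one observes that all marginals of the $\mu^{*n} * \eta^{\otimes k}$ are equal to $\eta$.
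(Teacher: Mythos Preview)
Your proof is correct and follows essentially the same route as the paper: pass to a compact model, invoke \textsc{(I)} $\Rightarrow$ \textsc{(IV)} of Lemma \ref{lemma_proxtransfer} for weak* convergence against continuous test functions, and then repeat the $L^1$-approximation/telescoping argument from the proof of Lemma \ref{lemma_canonicalquasifactor} to upgrade to bounded Borel functions. The paper's Remark after the proposition makes exactly this point.
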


\begin{remark}
A word of caution here: we do not claim that upon passing to a compact model of $(X,\xi)$, the functions $f_1,\ldots,f_k$ become
continuous, and Lemma \ref{lemma_proxtransfer} can be applied of-the-shelf. Rather, if $(M,\eta)$ is a $\tau$-proximal compact
model of $(X,\xi)$, we apply \textsc{(IV)} in Lemma \ref{lemma_proxtransfer} to conclude that 
$\lim_n \mu^{*n} * \eta^{\otimes k} = \eta_{\Delta_k}$ in the weak*-sense. Then, just as in the proof of Lemma \ref{lemma_canonicalquasifactor}, 
we conclude that the convergence also holds for arbitrary $k$-tuples of bounded Borel functions on $M$.
\end{remark}

\begin{proposition}
\label{prop_proxcrit}
Let $G$ be a lcsc group, let $\mu_1$ and $\mu_2$ be probability measures on $G$ and let $(X,\xi)$ be a Borel $(G,\mu_1)$-space which is also a Borel 
$(G,\mu_2)$-space. \vspace{0.1cm}
\begin{itemize}
\item[\textsc{(i)}] If $\mu_1^{*n} * \xi^{\otimes 4} = \mu_2^{*n} * \xi^{\otimes 4}$ for all $n$, then $\xi$ is $\mu_1$-proximal if and only if it is $\mu_2$-proximal. \vspace{0.1cm}
\item[\textsc{(ii)}] If $\mu_1$ and $\mu_2$ are spread-out and $\mu_1 * \mu_2 = \mu_2 * \mu_1$, then $\xi$ is $\mu_1$-proximal if and only if it is $\mu_2$-proximal.
\end{itemize}
\end{proposition}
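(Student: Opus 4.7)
The plan is to pass everything to a common compact model and then invoke Corollary \ref{cor_4power} for part \textsc{(i)} and Corollary \ref{cor_PBcommute} for part \textsc{(ii)}.

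First I would invoke Proposition \ref{prop_cptmodel}\textsc{(i)} to produce a compact model $(M,\eta)$ of $(X,\xi)$ viewed as a Borel $(G,\mu_1)$-space. The key observation here is that the data defining a compact model only consists of the $G$-space structure of $X$ and the measure $\xi$ itself — the stationarity hypothesis on $\xi$ enters only to guarantee that $\eta$ is $\mu_1$-stationary. Consequently, since $(X,\xi)$ is simultaneously a Borel $(G,\mu_2)$-space, the push-forward $\eta$ through the measurable $G$-isomorphism $(X,\xi) \to (M,\eta)$ is automatically $\mu_2$-stationary as well. Thus $(M,\eta)$ serves as a compact model of $(X,\xi)$ both as a $(G,\mu_1)$-space and as a $(G,\mu_2)$-space.

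For part \textsc{(i)}, the measurable $G$-isomorphism $(X,\xi) \to (M,\eta)$ extends to a measurable $G^k$-isomorphism $(X^k,\xi^{\otimes k}) \to (M^k,\eta^{\otimes k})$ for every $k\geq 1$ and intertwines convolutions with measures on $G$ on the diagonal. Hence the assumption $\mu_1^{*n}*\xi^{\otimes 4} = \mu_2^{*n}*\xi^{\otimes 4}$ transports to the identity $\mu_1^{*n}*\eta^{\otimes 4} = \mu_2^{*n}*\eta^{\otimes 4}$ for all $n$. Corollary \ref{cor_4power} then gives that $\eta$ is $\mu_1$-proximal on $M$ if and only if it is $\mu_2$-proximal on $M$. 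By Proposition \ref{prop_cptmodel}\textsc{(ii)}, any two compact models of $(X,\xi)$ are related by a Borel $G$-isomorphism which sends the conditional measure map for one to the conditional measure map for the other modulo $\mu_i^{\otimes \mathbb{N}}$-null sets; hence if one compact model is $\mu_i$-proximal, so is every compact model, and the definition of $\mu_i$-proximality for $(X,\xi)$ reduces to that of $(M,\eta)$ for each $i = 1,2$. This yields the equivalence.

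For part \textsc{(ii)}, the hypotheses of Corollary \ref{cor_PBcommute} are met on the compact $G$-space $M$: the measures $\mu_1, \mu_2$ are spread-out and commute under convolution, and $\eta$ is stationary under both. Applying that corollary directly yields that $\eta$ is $\mu_1$-proximal if and only if it is $\mu_2$-proximal, and the same transfer argument as above propagates this equivalence back to $(X,\xi)$. The only point to be slightly careful about is the compatibility of compact models under the two measures simultaneously, but as noted in the first paragraph, this is automatic from the construction. I do not expect any serious obstacle: the whole proposition is essentially a formal corollary of the corresponding statements for compact models together with the functorial behaviour recorded in Proposition \ref{prop_cptmodel}.
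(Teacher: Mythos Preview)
Your proposal is correct and follows exactly the approach the paper takes: the paper simply states that the proposition follows ``by passing to compact models in Lemma \ref{lemma_proxtransfer}, Corollary \ref{cor_4power} and Corollary \ref{cor_PBcommute}'', and you have spelled out precisely this reduction, including the (routine but necessary) observation that a single compact model serves simultaneously for both $\mu_1$ and $\mu_2$.
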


Let us also record here some following well-known properties of the canonical quasi-factor, introduced in Subsection \ref{subsec:canonicalquasi} above.

\begin{proposition}
\label{prop_proximalityandcanonicalquasi}
Let $M$ be a compact $G$-space, let $\eta$ be a $\mu$-stationary probability measure on $M$ and let $(\Prob(M),\eta^*)$ denote the canonical quasi-factor
associated with $(M,\eta)$.
 \vspace{0.1cm}
\begin{itemize}
\item[\textsc{(i)}] $(\Prob(M),\eta^*)$ is a $\mu$-boundary. \vspace{0.1cm}
\item[\textsc{(ii)}] If $\eta$ is $\mu$-proximal, then $(\Prob(M),\eta^*)$ is measurably $G$-isomorphic to $(M,\eta)$.
\end{itemize}
\end{proposition}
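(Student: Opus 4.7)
For part \textsc{(i)}, the plan is to compute the conditional measures of $(\Prob(M), \eta^*)$ directly and show they are almost surely Dirac. The starting point is the cocycle identity
\begin{equation*}
g_1 \cdots g_n \, \eta_{\omega'} \;=\; \eta_{(g_1, \ldots, g_n, \omega'_1, \omega'_2, \ldots)} \quad \textrm{in } \Prob(M),
\end{equation*}
valid for every $g_1, \ldots, g_n \in G$ and $\mu^{\otimes \bN}$-a.e.\ $\omega'$, which follows because both sides equal $\lim_m g_1 \cdots g_n \omega'_1 \cdots \omega'_m \eta$ in the weak* topology, combining Lemma \ref{lemma_conditional} with weak*-continuity of the fixed-$g$ action on $\Prob(M)$.

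Combined with the definition of $\eta^*$ as the law of $\omega' \mapsto \eta_{\omega'}$ under $\mu^{\otimes \bN}$, for any $F \in C(\Prob(M))$ I obtain
\begin{equation*}
(\omega_1 \cdots \omega_n \, \eta^*)(F) \;=\; \int_{G^\bN} F\bigl(\eta_{(\omega_1, \ldots, \omega_n, \omega')}\bigr) \, d\mu^{\otimes \bN}(\omega') \;=\; \bE\!\bigl[\phi_F \,\big|\, \cF_n\bigr](\omega),
\end{equation*}
where $\phi_F(\omega) := F(\eta_\omega)$ is bounded measurable on $(G^\bN, \mu^{\otimes \bN})$ and $\cF_n = \sigma(\omega_1, \ldots, \omega_n)$. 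By L\'evy's upward martingale convergence theorem, this converges $\mu^{\otimes \bN}$-almost surely to $\phi_F(\omega) = F(\eta_\omega)$. Choosing a countable dense subset of the separable space $C(\Prob(M))$ and intersecting the associated conull sets, I conclude that for $\mu^{\otimes \bN}$-a.e.\ $\omega$,
\begin{equation*}
\omega_1 \cdots \omega_n \, \eta^* \;\longrightarrow\; \delta_{\eta_\omega} \quad \textrm{in weak* on } \Prob(\Prob(M)).
\end{equation*}
Hence the conditional measure map $(\eta^*)_\omega = \delta_{\eta_\omega}$ is a.s.\ Dirac, which is precisely the $\mu$-proximality of $(\Prob(M), \eta^*)$.

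For part \textsc{(ii)}, if $\eta$ is $\mu$-proximal then $\eta_\omega = \delta_{m_\omega}$ for some Borel map $\omega \mapsto m_\omega$, so $\eta^*$ is concentrated on the closed $G$-invariant subset $s(M) := \{\delta_m : m \in M\} \subset \Prob(M)$. The map $s : M \to \Prob(M),\, m \mapsto \delta_m$ is a continuous $G$-equivariant injection with Borel $G$-equivariant inverse $\pi : s(M) \to M$. For any $f \in C(M)$, the identity $(f \circ \pi)(\delta_m) = f(m) = \widehat{f}(\delta_m)$ together with \eqref{quasifactor} yields
\begin{equation*}
(\pi_* \eta^*)(f) \;=\; \eta^*(f \circ \pi) \;=\; \eta^*(\widehat{f}) \;=\; \eta(f),
\end{equation*}
so $s$ and $\pi$ implement a measure-preserving $G$-isomorphism between $(M, \eta)$ and $(\Prob(M), \eta^*)$.

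The main obstacle is justifying the cocycle identity and the reduction to L\'evy's theorem on a common conull set; both rely on weak*-continuity of the $G$-action on $\Prob(M)$ and separability of $C(\Prob(M))$, and once those are in hand the rest is essentially bookkeeping.
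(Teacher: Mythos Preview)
Your argument is correct. The paper does not actually prove this proposition: it simply cites \cite[Proposition 3.2]{Fur3} for \textsc{(i)} and \cite[Theorem 4.3]{FuGl} for \textsc{(ii)}. Your martingale approach via the cocycle relation $g_1\cdots g_n\,\eta_{\omega'}=\eta_{(g_1,\ldots,g_n,\omega')}$ and L\'evy's upward theorem is the standard route and is essentially what lies behind those references, so you have supplied a self-contained proof where the paper defers to the literature.

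One small remark: your claim that the cocycle identity holds ``for every $g_1,\ldots,g_n\in G$'' is slightly stronger than what you need and slightly stronger than what follows immediately from Lemma~\ref{lemma_conditional}, since $\eta_\omega$ is only defined on the conull set $\Omega_\eta$. What you actually use (and what Fubini gives you) is that for $\mu^{\otimes n}$-a.e.\ $(\omega_1,\ldots,\omega_n)$ the identity holds for $\mu^{\otimes\bN}$-a.e.\ $\omega'$; this is enough for the conditional-expectation computation, so the argument goes through unchanged.
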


\begin{proof}
\textsc{(i)} can for instance be found in \cite[Proposition 3.2]{Fur3}, while \textsc{(ii)} is contained in \cite[Theorem 4.3]{FuGl}.
\end{proof}

\subsection{The Poisson boundary of a measured group}

The following fundamental construction is due to Furstenberg \cite{Fur1}.

\begin{theorem}
\label{thm_Poisson}
For every measured group $(G,\mu)$ there exists a $\mu$-proximal Borel $(G,\mu)$-space $(B,\nu)$ (which is unique modulo measurable $G$-isomorphisms) with the following properties: \vspace{0.1cm}
\begin{myindentpar}{0.2cm}
\textsc{Maximality:} Every $\mu$-boundary is a $G$-factor of $(B,\nu)$. \vspace{0.2cm}
\end{myindentpar}

\begin{myindentpar}{0.2cm}
\textsc{Poisson representation:} The Poisson transform $P_\nu : L^\infty(B,\nu) \ra \cH^\infty(G,\mu)$ is an isometric isomorphism.
\end{myindentpar}
\end{theorem}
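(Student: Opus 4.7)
The plan is to construct the Poisson boundary $(B,\nu)$ directly as a quotient of the path space and then verify the three asserted properties. First I would set $\Omega = G^{\bN}$, $\bP = \mu^{\otimes \bN}$, and consider the random walk $Z_n(\omega) = \omega_1 \cdots \omega_n$. Let $\cT = \bigcap_{n \geq 1} \sigma(Z_k : k \geq n)$ denote the tail $\sigma$-algebra, which is countably generated and $G$-invariant modulo $\bP$-null sets for the $G$-action $g \cdot (\omega_1,\omega_2,\ldots) = (g\omega_1,\omega_2,\ldots)$. By Mackey's point realization theorem, there exist a standard Borel $G$-space $(B,\nu)$ and a Borel $G$-map $\beta : \Omega \to B$ such that $\beta_*\bP = \nu$ and $\beta^{-1}(\cB_B) = \cT$ up to $\bP$-null sets. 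A direct application of the Markov property shows that $\nu$ is $\mu$-stationary.

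Next I would verify that $(B,\nu)$ is $\mu$-proximal. Passing to a compact model $(M,\eta)$ via Proposition \ref{prop_cptmodel}, for each $f \in C(M)$ the process $M_n(\omega) = (Z_n(\omega)\eta)(f)$ is a bounded $\sigma(Z_1,\ldots,Z_n)$-martingale whose almost-sure limit is $\eta_\omega(f)$ by Lemma \ref{lemma_conditional}. Since this limit is $\cT$-measurable, applying this to a countable dense family in $C(M)$ shows that $\omega \mapsto \eta_\omega$ factors through $\beta$. Comparing $\bE[\eta_\omega(f)^2]$ with $\bE[\eta_\omega(f^2)]$ (both equal $\nu(f^2)$, the former by stationarity of the martingale's limiting variance and the latter by construction) then forces $\eta_\omega(f^2) = \eta_\omega(f)^2$ almost surely, which in turn forces $\eta_\omega$ to be almost surely a Dirac mass.

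The Poisson representation then proceeds as follows. The map $P_\nu : L^\infty(B,\nu) \to \cH^\infty(G,\mu)$ is manifestly a norm-decreasing $G$-equivariant linear map. For isometry, $\mu$-proximality implies that $\bP$-almost surely $Z_n(\omega)\nu \to \delta_{\beta(\omega)}$ in the weak*-topology, so $(P_\nu f)(Z_n(\omega)) \to f(\beta(\omega))$; since $\beta_*\bP = \nu$, this yields $\|P_\nu f\|_\infty \geq \|f\|_\infty$. For surjectivity, given $\varphi \in \cH^\infty(G,\mu)$, the process $\varphi(Z_n)$ is a bounded $\bP$-martingale; its Doob limit is $\cT$-measurable and thus of the form $f \circ \beta$ for a unique $f \in L^\infty(B,\nu)$, and dominated convergence applied to the identity $\bE[\varphi(gZ_n)] = \varphi(g)$ then gives $P_\nu f = \varphi$.

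Finally, maximality and uniqueness are almost formal. Given any $\mu$-boundary $(X,\xi)$, its Poisson transform $P_\xi$ is itself an isometric $G$-equivariant embedding of $L^\infty(X,\xi)$ into $\cH^\infty(G,\mu)$ by the same proximality argument applied to the compact model. Composing with $P_\nu^{-1}$ produces a unital, positive, $G$-equivariant embedding $L^\infty(X,\xi) \hookrightarrow L^\infty(B,\nu)$, which is implemented by a measure-preserving measurable $G$-factor map $(B,\nu) \to (X,\xi)$ via the standard point realization of commutative von Neumann subalgebras. Uniqueness of $(B,\nu)$ modulo measurable $G$-isomorphism is then immediate, since any second object with both properties receives and emits such factor maps, which must be mutual inverses. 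The main obstacle I expect is the proximality step: showing that the tail-measurable martingale limits $\eta_\omega$ are genuinely point masses rather than merely some honest random measures; this is the classical technical heart of the theorem and rests on the subtle fact that the tail $\sigma$-algebra $\cT$ encodes exactly the harmonic data and nothing more.
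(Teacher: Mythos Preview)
The paper does not prove this theorem; it is stated as a known result due to Furstenberg, with a citation to \cite{Fur1}, and no argument is given. So there is nothing to compare your proposal against on the paper's side. That said, your outline follows the standard path-space construction and is correct in its overall architecture (tail $\sigma$-algebra, Mackey realization, martingale convergence for the Poisson representation, and the von Neumann algebra argument for maximality).

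There is, however, a genuine gap in your proximality step. You assert that $\bE[\eta_\omega(f)^2] = \nu(f^2)$ ``by stationarity of the martingale's limiting variance,'' but this is not a standard identity and I do not see how to justify it at this stage of the argument. What is automatic is the inequality $\bE[\eta_\omega(f)^2] \leq \bE[\eta_\omega(f^2)] = \eta(f^2)$ (Cauchy--Schwarz in the first step, disintegration in the second), and equality is precisely what you need to prove. Writing $\eta_\omega(f) = \tilde f(\beta(\omega))$ for some $\tilde f \in L^\infty(B,\nu)$, the left side is $\nu(\tilde f^2)$; to match it with $\eta(f^2)$ you must identify $\tilde f$ with $f$ under the model isomorphism $(B,\nu)\cong(M,\eta)$, and that identification is exactly the content of the Poisson representation.

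The fix is to reverse the order: prove the Poisson representation first (your argument for it is fine and does not use proximality), and then deduce proximality. Concretely, $P_\eta f$ is bounded $\mu$-harmonic, so by surjectivity of $P_\nu$ there is $\tilde f \in L^\infty(B,\nu)$ with $P_\nu \tilde f = P_\eta f$; transporting through the isomorphism $\psi:(B,\nu)\to(M,\eta)$ and using injectivity of $P_\eta$ gives $\tilde f = f\circ\psi$. Hence $\eta_\omega(f) = \tilde f(\beta(\omega)) = f(\psi(\beta(\omega)))$ for all $f$ in a countable dense set in $C(M)$, so $\eta_\omega = \delta_{\psi(\beta(\omega))}$ almost surely. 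You correctly anticipated that proximality is the delicate point; the resolution is that it is a \emph{consequence} of the Poisson representation rather than a prerequisite for it.
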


\begin{remark}
We stress that we assume in Theorem \ref{thm_Poisson} that $\mu$ is spread-out. 
\end{remark}

\begin{definition}[Poisson boundary]
The Borel $(G,\mu)$-space $(B,\nu)$ (or any of its measurable $G$-isomorphic images) in Theorem \ref{thm_Poisson} is called the 
\emph{Poisson boundary} of the measured group $(G,\mu)$.
\end{definition}

\subsection{Dense subgroups}

We shall now prove one of the key ingredient in the proof of \textsc{(ii)} in Theorem \ref{cor1_Thm2}. Although we have not seen this result 
explicitly stated in the literature, it is a fairly immediate consequence of the fundamental works of Mackey and Ramsey, as outlined in the 
appendix to Zimmer's book \cite{Zi}. For completeness, we shall provide the details . We use the same terminology of measurable $G$-maps 
and $G$-factors as in the beginning of the section, but with the difference that our measures are not necessarily $\mu$-stationary (but only 
quasi-invariant under $G$, and certain dense subgroups thereof). For this reason we shall refer to Borel $G$-spaces, and not Borel $(G,\mu)$-spaces,
and we shall not insist that our maps are measure-preserving, but only that they map measure classes to measure classes. 
As before, we always assume that the Borel spaces involved are standard. \\

In what follows, let
\begin{itemize}
\item $G$ be a lcsc group and $G_o$ a \emph{dense} subgroup of $G$. \vspace{0.1cm}
\item $(X,\xi)$ be a Borel $G$-space and $(Y,\kappa)$ a Borel $G_o$-space. \vspace{0.1cm}
\item $\alpha : (X,\xi) \ra (Y,\nu)$ a measurable $G_o$-map. \vspace{0.1cm}
\end{itemize}

\begin{proposition}
\label{prop_dense}
There exist
\begin{itemize}
\item a Borel $G$-space $(Z,\lambda)$, \vspace{0.1cm}
\item a measurable $G_o$-isomorphism $\beta : (Y,\kappa) \ra (Z,\lambda)$, \vspace{0.1cm}
\item a measurable $G$-map $\gamma : (X,\xi) \ra (Z,\lambda)$, \vspace{0.1cm}
\end{itemize} 
such that  \\
\vspace{0.2cm}
\centerline{\xymatrix{\ar[d]_{\alpha} (X,\xi) \ar[dr]^{\gamma}  & \\
(Y,\kappa)   \ar[r]_{\beta} &   (Z,\lambda) }}
\\
In other words, every $G_o$-factor of the Borel $G_o$-space $(X,\xi)$ is measurably $G_o$-isomorphic 
to a $G$-factor of the Borel $G$-space $(X,\xi)$, viewed as a $G_o$-space.
\end{proposition}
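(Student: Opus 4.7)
The plan is to construct $(Z,\lambda)$ as the $G$-factor of $(X,\xi)$ corresponding to the sub-$\sigma$-algebra pulled back from $Y$ via $\alpha$, and to show that this $\sigma$-algebra is automatically $G$-invariant (not merely $G_o$-invariant), thanks to the density of $G_o$ in $G$.

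First I would introduce the sub-$\sigma$-algebra $\cA := \alpha^{-1}(\cB_Y) \subset \cB_X$, viewed modulo $\xi$-null sets. Since $\alpha$ is $G_o$-equivariant, $\cA$ is $G_o$-invariant in the measure algebra of $(X,\xi)$; equivalently, the closed subspace $L^2(X,\cA,\xi) \subset L^2(X,\xi)$ is stable under the Koopman representation
\[
(U_g f)(x) = \sqrt{\tfrac{dg_*\xi}{d\xi}(x)} \, f(g^{-1}x), \qquad g \in G_o,
\]
where the Radon--Nikodym cocycle, restricted to $\cA$, is pulled back from the analogous cocycle on $(Y,\kappa)$ and is therefore $\cA$-measurable.

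The key step is then to upgrade this $G_o$-invariance to $G$-invariance. For this, one appeals to the classical fact (see the appendix of \cite{Zi}) that for any Borel action of a lcsc group $G$ on a standard Borel space $(X,\xi)$ with $\xi$ quasi-invariant, the Koopman representation $U : G \to \cU(L^2(X,\xi))$ is strongly continuous. Combining this with the density of $G_o$ in $G$, and with the facts that $L^2(X,\cA,\xi)$ is closed in $L^2(X,\xi)$ and stable under $U_{G_o}$, forces $L^2(X,\cA,\xi)$ to be stable under $U_G$, and hence $\cA$ to be $G$-invariant in the measure algebra. Next I would invoke Mackey's point realization theorem (again, see the appendix of \cite{Zi}): every $G$-invariant sub-$\sigma$-algebra of $\cB_X$ modulo $\xi$-null sets is realized, after discarding a $G$-invariant $\xi$-null subset of $X$, as $\gamma^{-1}(\cB_Z)$ for some standard Borel $G$-space $(Z,\lambda)$ with $\lambda$ quasi-invariant in the measure class of $\gamma_*\xi$, where $\gamma : X \to Z$ is a Borel $G$-map. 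Applying this to $\cA$ produces the desired triple $(Z,\lambda,\gamma)$. Since $\gamma^{-1}(\cB_Z) = \cA = \alpha^{-1}(\cB_Y)$ modulo null sets, a second application of Mackey's theorem yields a Borel isomorphism $\beta$ between conull subsets of $Y$ and $Z$ realizing this canonical identification of measure algebras, and one checks directly that $\gamma = \beta \circ \alpha$ modulo null sets and that $\beta$ is $G_o$-equivariant.

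The main obstacle is the bootstrapping from $G_o$- to $G$-invariance of $\cA$. It relies on strong continuity of the Koopman representation, a classical but technical result whose proof exploits joint Borel measurability of the action together with separability of the underlying $L^2$-space. Once $G$-invariance of $\cA$ is in hand, the remainder of the argument is a routine application of Mackey's $\sigma$-algebra / point realization correspondence.
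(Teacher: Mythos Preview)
Your overall strategy coincides with the paper's: identify $(Y,\kappa)$ with a $G_o$-invariant sub-measure-algebra of $(X,\xi)$, use density of $G_o$ together with a continuity statement to promote this to $G$-invariance, and then invoke Mackey's point realization (Zimmer's appendix) to produce $(Z,\lambda)$, $\gamma$ and $\beta$. So the architecture is right.

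However, the specific mechanism you use for the density argument has a genuine gap. You claim that the Koopman unitary $U_g f = \sqrt{\tfrac{dg_*\xi}{d\xi}}\, f\circ g^{-1}$ preserves $L^2(X,\cA,\xi)$ for $g\in G_o$, on the grounds that the Radon--Nikodym cocycle is $\cA$-measurable. This is false in general: $\alpha$ being $G_o$-equivariant forces $\cA=\alpha^{-1}(\cB_Y)$ to be $G_o$-invariant as a $\sigma$-algebra, but it says nothing about the fibrewise behaviour of $\xi$. For a blunt counterexample, take $Y$ a point (so $\cA$ is trivial), $X=\bR$ with $d\xi=e^{-x^2}\,dx$, and $G_o=G=\bR$ acting by translation; then $\tfrac{d(t_*\xi)}{d\xi}(x)=e^{2tx-t^2}$ is not constant, so $U_t$ does not send constants to constants. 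What \emph{is} $\cA$-measurable is the conditional expectation $E\big[\tfrac{dg_*\xi}{d\xi}\,\big|\,\cA\big]$, which is the pullback of the cocycle on $(Y,\alpha_*\xi)$; but that is not what appears in $U_g$.

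The paper avoids this by working not in $L^2$ but in $L^\infty(X,\xi)$ with the weak*-topology, where the $G$-action is simply $g\cdot f = f\circ g^{-1}$ with no cocycle, and is continuous because it is dual to the isometric strongly continuous $G$-action on $L^1(X,\xi)$. The image $\alpha^*(B(Y,\kappa))$ inside the weak*-compact set of idempotents $B(X,\xi)=\{f\in L^\infty: f^2=f\}$ is then $G_o$-invariant and weak*-closed, hence $G$-invariant by density; from there Zimmer's Theorems B.6, B.7, B.10 give exactly the point realization you describe. Your argument is repaired verbatim by replacing the $L^2$-Koopman representation with this $L^\infty$/Boolean picture.
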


\begin{proof}
The proof consists of two main steps, which are both composed of references to the appendix of Zimmer's book \cite{Zi}. \\

\noindent \textsc{Step I: Construction of $(Z,\lambda)$ and $\beta$.} \\

Let $(W,\vartheta)$ be a Borel $G$-space.  Since $G$ is lcsc, $G$ acts continuously on the Banach space $L^1(W,\vartheta)$ and thus on 
$L^\infty(W,\vartheta)$ endowed with \emph{the weak*-topology} (the $G$-action is not necessarily continuous with respect to the norm-topology). We write
\[
B(W,\vartheta) = \big\{ f \in L^\infty(W,\vartheta) \, : \, f^2 = f \big\},
\]
which is clearly a $G$-invariant weak*-compact set. If $(W',\vartheta')$ is another Borel $G$-space, and $\delta : (W,\vartheta) \ra (W',\vartheta')$ is a measurable $G$-map,
then $\delta^* : B(W',\vartheta') \ra B(W,\vartheta)$ is a continuous, $G$-equivariant and \emph{injective} map. In particular, 
$\delta^*(B(W',\vartheta'))$ is a weak*-compact and $G$-invariant subset of $B(W,\vartheta)$. \\

Let us now specialize this to our setting. Since $G_o$ is dense in $G$ and $G$ acts continuously on $B(X,\xi)$ and 
since $\alpha^*(B(Y,\kappa)) \subset B(X,\xi)$ is $G_o$-invariant and weak*-compact, we conclude that $\alpha^*(B(Y,\kappa))$ is $G$-invariant. 
Since $\alpha^*$ is injective, we can thus endow $B(Y,\kappa)$ with a jointly continuous $G$-action so that $\alpha^*$ is 
$G$-equivariant, by setting 
\[
g.f := (\alpha^*)^{-1}(g.\alpha^*(f)), \quad \textrm{for $g \in G$ and $f \in B(Y,\kappa)$}.
\]
By \cite[Theorem B.10]{Zi}, there exist a Borel $G$-space $(Z,\lambda)$ and a $G$-equivariant Boolean map 
$\phi : B(Z,\lambda) \ra B(Y,\kappa)$. Since $\phi$ is also $G_o$-equivariant, \cite[Corollary B.7]{Zi} asserts that there exists a 
measurable $G_o$-isomorphism $\beta : (Y,\kappa) \ra (Z,\lambda)$, such that $\phi = \beta^*$.\\

\noindent \textsc{Step II: Construction of $\gamma$} \\

Note that $\psi : \alpha^* \circ \beta^* : B(Z,\eta) \ra B(X,\mu)$ is a $G$-equivariant Boolean $G$-map, so by \cite[Corollary B.6]{Zi},
there exists a measurable $G$-map $\gamma : (X,\mu) \ra (Z,\eta)$ such that $\gamma^* = \psi$, and thus $\gamma = \beta \circ \alpha$.
\end{proof}

We stress that the maps $\beta$ and $\gamma$ in Proposition \ref{prop_dense} are not measure-preserving in general. However, the following 
observation tells us that in the setting of \emph{ergodic} Borel $(G,\mu)$-spaces, for $\mu$ spread-out, these maps are automatically measure-preserving. 

\begin{lemma}
\label{lemma_dense}
Let $\mu$ be a spread-out probability measure on $G$ and let $(X,\xi)$ and $(X',\xi')$ be ergodic Borel $(G,\mu)$-spaces. 
Then every measurable $G$-map 
\[
\pi : (X,\xi) \ra (X',\xi')
\] 
is measure-preserving. 
\end{lemma}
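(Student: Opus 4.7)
The plan is to show that the push-forward $\pi_*\xi$ is both $\mu$-stationary on $X'$ and in the measure class of $\xi'$, and then to invoke the uniqueness of ergodic $\mu$-stationary measures in a given class (Lemma \ref{lemma_basicprops}(iii)) to upgrade ``same measure class'' to ``equal measures''.

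First I would collect three basic facts. Fact (a): $\pi_*\xi$ is $\mu$-stationary on $X'$; this is an immediate consequence of $G$-equivariance of $\pi$, since pushing $\mu \ast \xi = \xi$ forward gives $\mu \ast \pi_*\xi = \pi_*\xi$. Fact (b): $\pi_*\xi$ and $\xi'$ are in the same measure class, by the very definition of a measurable $G$-map recalled at the start of Section \ref{sec:prelBorel}. Fact (c): both $\pi_*\xi$ and $\xi'$ are ergodic; ergodicity of $\xi'$ is assumed, and ergodicity of $\pi_*\xi$ follows from ergodicity of $\xi$ since preimages of $G$-invariant sets under a $G$-equivariant map are $G$-invariant.

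Next, since Lemma \ref{lemma_basicprops}(iii) is stated for compact $G$-spaces, I would transfer everything to a compact model. Using Proposition \ref{prop_cptmodel}(i), fix measure-preserving measurable $G$-iso\-mor\-phisms $\phi : (X,\xi) \to (M,\eta)$ and $\phi' : (X',\xi') \to (M',\eta')$ onto compact models, and set $\sigma := \phi' \circ \pi \circ \phi^{-1}$, which is a measurable $G$-map between the compact $G$-spaces $M$ and $M'$ (defined on a conull subset). Applying facts (a)--(c) to $\sigma$ in place of $\pi$, we see that $\sigma_*\eta$ is an ergodic $\mu$-stationary probability measure on $M'$ which is absolutely continuous with respect to the ergodic $\mu$-stationary measure $\eta'$. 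Since $\mu$ is spread-out, Lemma \ref{lemma_basicprops}(iii) forces $\sigma_*\eta = \eta'$, and unwinding through the measure-preserving isomorphism $\phi'$ yields $\pi_*\xi = \xi'$.

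The key point -- and the only real ``obstacle'' -- is the conceptual one that ``measurable $G$-map'' in this paper only preserves measure \emph{classes}, not measures; the entire content of the lemma is to upgrade this to a genuine equality of measures, and Lemma \ref{lemma_basicprops}(iii) (which rests on spread-outness of $\mu$) is precisely the tool that accomplishes this once we move to a compact model where the lemma is available.
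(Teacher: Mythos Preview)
Your proof is correct and follows essentially the same approach as the paper: pass to compact models via Proposition \ref{prop_cptmodel}, observe that $\pi_*\xi$ is ergodic, $\mu$-stationary, and absolutely continuous with respect to $\xi'$, and then invoke Lemma \ref{lemma_basicprops}\textsc{(iii)}. The paper's proof is slightly terser (it does not spell out fact (c) or the transfer through $\phi'$), but the logical content is identical.
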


\begin{proof}
By Proposition \ref{prop_cptmodel} we may without loss of generality assume that $X$ and $X'$ are compact $G$-spaces, and that $\pi$
is Borel. Since $\pi$ is a $G$-equivariant and $\xi$ is ergodic, $\pi_*\xi$ is a $\mu$-stationary and ergodic probability measure on $X'$ which is absolutely continuous with 
respect to $\xi'$, whence equal to $\xi'$ by \textsc{(iii)} in Lemma \ref{lemma_basicprops}.
\end{proof}

If we combine this lemma with Proposition \ref{prop_dense}, we arrive at the following corollary.

\begin{corollary}
\label{cor_dense}
Let $G$ be a lcsc group and $G_o$ a dense subgroup of $G$. Suppose that $\mu$ and $\mu_o$ are spread-out probability measures on $G$
and $G_o$ respectively, and let $(X,\xi)$ be an ergodic Borel $(G,\mu)$-space which is also a Borel $(G_o,\mu_o)$-space. Then every 
$G_o$-factor of $(X,\xi)$ is measurably $G_o$-isomorphic (in a measure-preserving way) to a $G$-factor of $(X,\xi)$.
\end{corollary}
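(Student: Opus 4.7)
The plan is to apply Proposition \ref{prop_dense} to the given $G_o$-factor map and then promote the intermediate Borel $G$-space to an ergodic Borel $(G,\mu)$-space by push-forward of $\xi$. Concretely, let $\alpha : (X,\xi) \to (Y,\kappa)$ realize $(Y,\kappa)$ as a $G_o$-factor, so that $\alpha_*\xi = \kappa$. Applying Proposition \ref{prop_dense} yields a Borel $G$-space $(Z,\lambda)$, a measurable $G_o$-isomorphism $\beta : (Y,\kappa) \to (Z,\lambda)$, and a measurable $G$-map $\gamma : (X,\xi) \to (Z,\lambda)$ satisfying $\gamma = \beta \circ \alpha$ as $G_o$-maps.

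The next step is to replace $\lambda$ by $\lambda' := \gamma_*\xi$. By the conventions of Section \ref{sec:prelBorel}, a measurable $G$-map transports measure classes to measure classes, so $\lambda' \sim \lambda$, and $(Z,\lambda')$ has exactly the same underlying Borel $G$-space structure as $(Z,\lambda)$. Since $\xi$ is $\mu$-stationary and $\gamma$ is $G$-equivariant, $\lambda'$ is $\mu$-stationary; since $\xi$ is $G$-ergodic, so is $\lambda'$. Hence $(Z,\lambda')$ is an ergodic Borel $(G,\mu)$-space, and tautologically $\gamma : (X,\xi) \to (Z,\lambda')$ is measure-preserving, realizing $(Z,\lambda')$ as a $G$-factor of $(X,\xi)$.

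It remains to check that $\beta : (Y,\kappa) \to (Z,\lambda')$ is a measure-preserving measurable $G_o$-isomorphism. The isomorphism property persists because $\lambda' \sim \lambda$, while measure preservation follows from
\[
\beta_*\kappa = \beta_*\alpha_*\xi = (\beta \circ \alpha)_*\xi = \gamma_*\xi = \lambda'.
\]
Alternatively, one can deduce measure preservation of $\beta$ from Lemma \ref{lemma_dense} applied to the ergodic Borel $(G_o,\mu_o)$-spaces $(Y,\kappa)$ and $(Z,\lambda')$ (which is indeed $\mu_o$-stationary because $\xi$ is and $\gamma$ is $G_o$-equivariant). I do not see any real obstacle: the substance is entirely contained in Proposition \ref{prop_dense}, and the only extra point is the observation that push-forward of an ergodic $\mu$-stationary measure along a measurable $G$-map supplies, for free, a $\mu$-stationary measure in the correct measure class, so that the $G$-extension of $\beta$ automatically intertwines the original Borel structures.
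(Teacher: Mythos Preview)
Your proof is correct and follows the same route as the paper: apply Proposition \ref{prop_dense} to obtain $(Z,\lambda)$, $\beta$, $\gamma$, and then upgrade the maps to measure-preserving ones. The paper simply states that the corollary follows by combining Proposition \ref{prop_dense} with Lemma \ref{lemma_dense}; your argument is in fact slightly more direct, since the push-forward $\lambda' = \gamma_*\xi$ and the identity $\beta_*\kappa = (\beta\circ\alpha)_*\xi = \gamma_*\xi = \lambda'$ already force both $\gamma$ and $\beta$ to be measure-preserving without invoking Lemma \ref{lemma_dense}.
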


\subsection{Transitive $G$-spaces}
\label{subsec:transitive}
Let $G$ be a lcsc group and $P$ a closed subgroup of $G$. By \cite[Section V.4]{Var}, the quotient space $G/P$ admits a unique $G$-invariant
measure class (but not necessarily a $G$-invariant \emph{measure}). By \cite[Section IV.2, Proposition 2.4b)]{Ma}, if $(X,\xi)$ is a Borel $G$-space and 
$\pi$ is meaurable $G$-equivariant map from $G/P$ to $X$, which maps the $G$-invariant measure class on $G/P$ to the measure class of $\xi$,
then $(X,\xi)$ is measurably $G$-isomorphic to a quotient space of the form $G/Q$, where $Q$ is a \emph{closed subgroup} of $G$ which 
contains $P$ (here, $G/Q$ is endowed with the unique $G$-invariant measure class). From this, and Corollary \ref{cor_dense} above, we can now
conclude the following result. 

\begin{corollary}
\label{cor_densesubgroupfactorhomo}
Let $G_o$ be a dense subgroup of $G$, $\mu_o$ a spread-out probability measure on $G_o$ and $\xi$ a $\mu_o$-stationary probability 
measure on $G/P$ which is absolutely continuous with respect to the unique $G$-invariant measure class. Then every $G_o$-factor of the
Borel $(G_o,\mu_o)$-space $(G/P,\xi)$ is 
$G_o$-isomorphic to $(G/Q,\xi')$, where $Q$ is a closed subgroup of $G$ which contains $P$, and $\xi'$ is $\mu_o$-stationary. 
In particular, if $P$ is a maximal proper closed subgroup of $G$, then the Borel $(G_o,\mu_o)$-space $(G/P,\xi)$ is prime.
\end{corollary}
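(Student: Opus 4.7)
The plan is to combine three ingredients: the lifting of $G_o$-factors to $G$-factors supplied by Proposition \ref{prop_dense}, the Mackey-Margulis identification of $G$-factors of $(G/P,\xi)$ as quotients $G/Q$ with $Q \supseteq P$ a closed subgroup, and a final measure-transfer step to produce the $\mu_o$-stationary $\xi'$.

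First I would observe that $(G/P,\xi)$ carries two compatible structures simultaneously. The absolute continuity of $\xi$ with respect to the unique $G$-invariant measure class makes $\xi$ a $G$-quasi-invariant measure, so $(G/P,\xi)$ is a Borel $G$-space in the sense of Section \ref{sec:prelBorel}; on the other hand, by hypothesis it is a Borel $(G_o,\mu_o)$-space. Given any $G_o$-factor $\alpha : (G/P,\xi) \to (Y,\kappa)$, Proposition \ref{prop_dense} produces a Borel $G$-space $(Z,\lambda)$, a measurable $G_o$-isomorphism $\beta : (Y,\kappa) \to (Z,\lambda)$, and a measurable $G$-map $\gamma = \beta \circ \alpha : (G/P,\xi) \to (Z,\lambda)$. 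Because $\gamma$ is $G$-equivariant and sends the $G$-invariant measure class on $G/P$ (which coincides with the class of $\xi$) onto the measure class of $\lambda$, the cited result \cite[Section IV.2, Proposition 2.4b)]{Ma} identifies $(Z,\lambda)$, in a measurable $G$-equivariant fashion, with $(G/Q,\lambda')$ for some closed subgroup $Q$ with $P \subseteq Q \subseteq G$, where $\lambda'$ lies in the $G$-invariant measure class on $G/Q$.

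Composing $\beta$ with this Mackey-Margulis identification gives a measurable $G_o$-isomorphism $\sigma : (Y,\kappa) \to (G/Q,\lambda')$; since Proposition \ref{prop_dense} and the Mackey-Margulis theorem only provide isomorphisms of measure classes, I would then set $\xi' := \sigma_*\kappa$. This $\xi'$ lies in the $G$-invariant class on $G/Q$, is $\mu_o$-stationary (inherited from $\kappa$ via the $G_o$-equivariance of $\sigma$), and makes $\sigma$ measure-preserving, which completes the first assertion. For the \emph{in particular} statement, if $P$ is a maximal proper closed subgroup of $G$, the only possibilities for $Q$ are $P$ and $G$, yielding as factors either $(G/P,\xi)$ itself or a one-point space, so $(G/P,\xi)$ is prime by definition.

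The principal subtlety I anticipate is the measure bookkeeping: Proposition \ref{prop_dense} only produces $G_o$-isomorphisms at the level of measure \emph{classes}, and the Mackey-Margulis result likewise only controls the $G$-invariant class on $G/Q$, so the stationary measure $\xi'$ has to be manufactured by pushing $\kappa$ forward along the composed isomorphism rather than being read off directly from the output of either step.
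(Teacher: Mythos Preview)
Your proof is correct and follows essentially the same route as the paper, which derives the corollary from Proposition \ref{prop_dense} (packaged with Lemma \ref{lemma_dense} as Corollary \ref{cor_dense}) together with the Mackey--Margulis identification \cite[Section IV.2, Proposition 2.4b)]{Ma}; your handling of the measure bookkeeping by pushing $\kappa$ forward at the end is exactly the right fix for the measure-class-only output of Proposition \ref{prop_dense}.

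One small imprecision to tighten: absolute continuity of $\xi$ with respect to the $G$-invariant class does \emph{not} by itself make $\xi$ $G$-quasi-invariant; you need equivalence. This does hold here, but it requires an argument: since $\mu_o$ is spread-out, $\xi$ is $G_o$-quasi-invariant (Lemma \ref{lemma_basicprops}\textsc{(i)}), so the set where the density $d\xi/dm$ vanishes is $G_o$-invariant modulo $m$-null sets, hence $G$-invariant (by density of $G_o$ and weak*-continuity of the $G$-action on $L^\infty(G/P,m)$), hence $m$-null by transitivity. With $\xi \sim m$ established, your application of Proposition \ref{prop_dense} and the Mackey--Margulis result goes through as written.
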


\subsection{The Furstenberg entropy of Borel $(G,\mu)$-spaces}

Let $G$ be a lcsc group and let $\mu$ be a Borel probability measure on $G$ (not necessarily spread-out). Suppose that $(X,\xi)$ is a Borel space
endowed with a jointly measurable action of $G$, preserving the measure class of $\xi$. We assume  that $\xi$ is $\mu$-stationary.  
The \emph{Furstenberg entropy} $h_\mu(X,\xi)$ is defined by
\[
h_\mu(X,\xi) = \int_G \Big( \int_X -\log \frac{dg^{-1}\xi}{d\xi}(x) \, d\xi(x) \Big) \, d\mu(g),
\]
whenever the integral exists and is finite. Note that $h_{\delta_e}(X,\xi) = 0$. If $\mu$ is spread-out, we define the \emph{entropy $h(\mu)$}  as the Furstenberg entropy of the Poisson boundary of the measured group $(G,\mu)$. We define
\[
\cA_\xi = \big\{ \mu' \in \Prob(G) \, \mid \, \mu' * \xi = \xi, \enskip \textrm{$h_{\mu'}(X,\xi)$ exists and is finite} \big\}.
\]
We leave the proof of the following easy lemma to the reader.
\begin{lemma}
\label{lemma_propEntropy}
The set $\cA_\xi \subset \Prob(G)$ is convex and closed under convolution. Furthermore, the map $\mu' \mapsto h_{\mu'}(X,\xi)$ is affine on the $\cA_\xi$, 
and satisfies
\[
h_{\mu' * \mu''}(X,\xi) = h_{\mu'}(X,\xi) + h_{\mu''}(X,\xi), \quad \textrm{for all $\mu',\mu'' \in \cA_\xi$}.
\]
\end{lemma}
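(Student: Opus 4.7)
The plan is to prove the additivity identity $h_{\mu' * \mu''}(X,\xi) = h_{\mu'}(X,\xi) + h_{\mu''}(X,\xi)$ first, and then deduce closure of $\cA_\xi$ under convolution for free. The two remaining assertions (convexity of $\cA_\xi$ and affineness of $\mu' \mapsto h_{\mu'}(X,\xi)$) both follow at once from the observation that the stationarity relation $\mu * \xi = \xi$ and the defining integral for $h_\mu(X,\xi)$ are both linear in $\mu$: for $t \in [0,1]$ and $\mu_1, \mu_2 \in \cA_\xi$, the measure $t\mu_1+(1-t)\mu_2$ inherits $\xi$-stationarity and has entropy $t\, h_{\mu_1}(X,\xi) + (1-t)\, h_{\mu_2}(X,\xi) < \infty$.

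The heart of the argument will be a cocycle identity for Radon-Nikodym derivatives. For $\mu'$-a.e. $g$ and $\mu''$-a.e. $h$, I would first establish
\[
\frac{d(gh)^{-1}\xi}{d\xi}(x) \;=\; \frac{dg^{-1}\xi}{d\xi}(hx) \cdot \frac{dh^{-1}\xi}{d\xi}(x), \qquad \textrm{for $\xi$-a.e. $x$},
\]
by writing $(gh)^{-1}\xi = h^{-1}(g^{-1}\xi)$, applying the chain rule with the intermediate measure $h^{-1}\xi$, and then identifying $\frac{dh^{-1}(g^{-1}\xi)}{dh^{-1}\xi}(x)$ with $\frac{dg^{-1}\xi}{d\xi}(hx)$ via the general change-of-variables formula $\frac{dT_*\nu_1}{dT_*\nu_2}(y) = \frac{d\nu_1}{d\nu_2}(T^{-1}y)$ applied to $T(x) = h^{-1}x$. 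Taking $-\log$ of both sides and integrating against $d\xi(x)\,d\mu''(h)\,d\mu'(g)$, the second term becomes independent of $g$ and delivers $h_{\mu''}(X,\xi)$. For the first term, I would fix $g$ and collapse the $h$-integration using the $\mu''$-stationarity of $\xi$:
\[
\int_G \int_X -\log \tfrac{dg^{-1}\xi}{d\xi}(hx) \, d\xi(x) \, d\mu''(h) \;=\; \int_X -\log \tfrac{dg^{-1}\xi}{d\xi}(y) \, d(\mu''*\xi)(y) \;=\; \int_X -\log \tfrac{dg^{-1}\xi}{d\xi}(y) \, d\xi(y),
\]
and integration in $g$ against $\mu'$ would then deliver $h_{\mu'}(X,\xi)$.

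To justify the iterated integrations, I would apply Jensen's inequality to the convex function $-\log$ and the probability measure $\xi$ to observe
\[
\int_X -\log \tfrac{dg^{-1}\xi}{d\xi}(x) \, d\xi(x) \;\geq\; -\log \int_X \tfrac{dg^{-1}\xi}{d\xi}(x) \, d\xi(x) \;=\; -\log 1 \;=\; 0,
\]
so that the integrand is non-negative throughout, Tonelli applies without restriction, and the additivity identity holds unambiguously in $[0,\infty]$. Once additivity is in hand, closure of $\cA_\xi$ under convolution follows immediately from associativity, $(\mu'*\mu'')*\xi = \mu'*(\mu''*\xi) = \mu'*\xi = \xi$, together with finiteness of $h_{\mu'*\mu''}(X,\xi) = h_{\mu'}(X,\xi) + h_{\mu''}(X,\xi)$.

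The main subtlety will be the change-of-variables step inside the Radon-Nikodym cocycle, which has to be carried out at the level of classes of $\xi$-measurable functions because the $G$-action on $X$ is only measurable and need not be free. This is a bookkeeping issue rather than a genuine difficulty: the pushforward identity $\frac{dT_*\nu_1}{dT_*\nu_2} \circ T = \frac{d\nu_1}{d\nu_2}$ holds whenever $T$ is a measurable automorphism of the underlying space, which is precisely the case for $T(x) = h^{-1}x$ after restricting to any fixed $G$-invariant $\xi$-conull subset of $X$ on which the $G$-action is genuinely by Borel bijections.
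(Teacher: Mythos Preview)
The paper does not actually supply a proof of this lemma; it simply states ``We leave the proof of the following easy lemma to the reader.'' Your argument is the standard one and is correct: the cocycle identity for Radon--Nikodym derivatives, combined with $\mu''$-stationarity of $\xi$, gives additivity, and convexity/affineness are immediate from linearity of the defining integral in $\mu$.

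One small point of precision: when you write ``the integrand is non-negative throughout, Tonelli applies without restriction,'' the \emph{pointwise} integrand $-\log\frac{dg^{-1}\xi}{d\xi}(x)$ is not non-negative; only the inner $x$-integral is (by Jensen). To fully justify interchanging the $h$- and $x$-integrations in the $B$-term, you should also note that the negative part $(\log u)^+$ is dominated by $u=\frac{dg^{-1}\xi}{d\xi}$ (from $\log t\le t-1$), which integrates to $1$ against $\xi$ and hence against $\mu''*\xi=\xi$. With that bound in hand, both positive and negative parts can be handled by Tonelli and the splitting $\iiint A=\iiint B+\iiint C$ is legitimate as an identity in $[0,\infty]$. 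This is bookkeeping rather than a gap, but it is the one place where a reader might pause.
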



\section{Proof of Theorem \ref{thm2}}

Throughout this section, let $\Gamma$ denote a countable discrete group, $H$ a totally disconnected lcsc group and 
$\rho : \Gamma \ra H$ a homomorphism with dense image. We shall fix a compact open subgroup $L$ of $H$ and 
set $\Lambda = \rho^{-1}(L) < \Gamma$. Then $(\Gamma,\Lambda)$ is a Hecke pair, and the map
\[
\overline{\rho} : \Gamma/\Lambda \ra H/L, \enskip \gamma \Lambda \mapsto \rho(\gamma)L
\]
is a bijection. We write $\alpha$ for the canonical quotient map from $\Gamma$ to $\Gamma/\Lambda$, and if $\varphi \in C_b(H)$, we
define $\varphi_L \in C_b(H/L)$ by
\[
\varphi_L(hL) = \int_L \varphi(hl) \, dm_L(l), \quad \textrm{for $hL \in H/L$},
\]
where $m_L$ denotes the Haar probability measure on $L$. We define the \emph{affine} map 
\[
\Prob(\Gamma ; \Lambda) \ra \Prob(H,L), \enskip \tau \mapsto \theta_\tau
\]
by
\begin{equation}
\label{defoftheta}
\theta_\tau(\varphi) = (\alpha_*\tau)(\varphi_L \circ \overline{\rho}), \quad \textrm{for $\varphi \in C_b(H)$}.
\end{equation}
By construction, $\theta_\tau$ is right-$L$-invariant. To prove that it is also left-$L$-invariant, and thus belongs to $\Prob(H,L)$, 
pick $l \in \rho(\Lambda)$ 
and $\lambda_l \in \Lambda$ such that $\rho(\lambda_l) = l$. Since $\tau$ is $\Lambda$-absorbing, $\tau_*\alpha$ is 
left-$\Lambda$-invariant, and thus
\[
\theta_\tau(\varphi(l \cdot)) = \alpha_*\tau((\varphi_L \circ \overline{\rho})(\lambda_l \cdot)) = 
\alpha_*\tau((\varphi_L \circ \overline{\rho})(\cdot)) = \theta_\tau(\varphi),
\]
for all $\varphi \in C_b(H)$, whence $\theta_\tau$ is left $\rho(\Lambda)$-invariant (and thus left $L$-invariant, since $\rho(\Lambda)$
is dense in $L$, and $H$-stabilizers of probability measures on $H$ are \emph{closed} subgroups). \\

It will be convenient to use the following notation: if $\tau$ is a $\Lambda$-absorbing probability measure on $\Gamma$,
set
\[
\overline{\tau} = \alpha_* \tau \qand \overline{\theta}_{\tau} = (\overline{\rho})_*\overline{\tau}, 
\]
so that $\overline{\theta}_{\tau}(\varphi_L) = \theta_{\tau}(\varphi)$ for all $\varphi \in C_b(H)$. In particular, 
\begin{equation}
\label{useful}
\overline{\tau}(\varphi_L \circ \overline{\rho}) = \theta_{\tau}(\varphi), \quad \textrm{for all $\varphi \in C(H) \cap \cL^1(H,\theta_\tau)$}.
\end{equation}
Since $\overline{\rho}$ is a bijection and $\tau \mapsto \overline{\tau}$ is surjective, we conclude that $\tau \mapsto \theta_\tau$
is surjective. 
 
\subsection*{Proof that the map is a monoid homomorphism}
 
We recall from \eqref{leftinv} that 
\[
\alpha_*(\tau_1 * \tau_2)(\gamma \Lambda) = \sum_{\eta \Lambda} \overline{\tau}_1(\eta \Lambda) \, \overline{\tau}_2(\eta^{-1} \gamma \Lambda), \quad \textrm{for $\gamma \Lambda \in \Gamma/\Lambda$}.
\]
Hence, for all $\varphi \in C_b(H)$, 
\begin{eqnarray*}
\theta_{\tau_1 * \tau_2}(\varphi) 
&=&
\alpha_*(\tau_1 * \tau_2)(\varphi_L \circ \overline{\rho}) = \sum_{\gamma \Lambda} \sum_{\eta \Lambda} 
\varphi_L(\overline{\rho}(\gamma \Lambda)) \, \overline{\tau}_1(\eta \Lambda) \, \overline{\tau}_2(\eta^{-1} \gamma \Lambda) \\[0.2cm]
&=&
\sum_{\eta \Lambda} \Big( \sum_{\gamma \Lambda} \varphi_L(\rho(\eta) \overline{\rho}(\gamma \Lambda)) \, \overline{\tau}_2(\gamma \Lambda)
\Big) \, \overline{\tau}_1(\eta \Lambda) \\[0.2cm]
&=&
\sum_{\eta \Lambda}  \overline{\theta}_{\tau_2}(\varphi_L(\rho(\eta) \cdot)) \, \overline{\tau}_1(\eta \Lambda)
= 
\sum_{\eta \Lambda}  \overline{\theta}_{\tau_2}((\varphi(\rho(\eta) \cdot))_L) \, \overline{\tau}_1(\eta \Lambda) \\[0.2cm]
&=&
\int_{H/L} \overline{\theta}_{\tau_2}((\varphi(h \cdot))_L) \, d\overline{\theta}_{\tau_1}(hL) = (\theta_{\tau_1} * \theta_{\tau_2})(\varphi),
\end{eqnarray*}
where we in the fourth identity have used \eqref{useful} and in the fifth identity the fact that the assignment $\varphi \mapsto \varphi_L$ is left-$H$-equivariant, i.e. $\varphi_L(h\cdot) = (\varphi(h\cdot))_L$ for all $h \in H$. Since $\tau_1$ and $\tau_2$ are arbitrary in $\Prob(\Gamma ; \Lambda)$, we conclude that 
\[
\theta_{\tau_1 * \tau_2} = \theta_{\tau_1} * \theta_{\tau_2}, \quad \textrm{for all $\tau_1, \tau_2 \in \Prob(\Gamma ; \Lambda)$}.
\]
Finally, $\theta_{\delta_e} = m_L$ and thus the map $\tau \mapsto \theta_\tau$ is a monoid homomorphism.
 
\subsection*{Proof of Property \textsc{(P1)}} 
 
Suppose that $\varphi \in C(H) \cap \cL^1(H,\theta_\tau)$ is right $L$-invariant. Then,
\[
\varphi_L \circ \overline{\rho} \circ \alpha = \varphi \circ \rho,
\]
whence $\theta_\tau(\varphi) = \rho_*\tau(\varphi)$ by \eqref{useful}. 
 
\subsection*{Proof of Property \textsc{(P2)}}

Let us fix $\tau \in \Prob(\Gamma ; \Lambda)$. Since $\theta_\tau$ is bi-$L$-invariant, its support must be bi-$L$-invariant, and since 
$\rho(\Gamma)$ is dense in $H$, it suffices to
prove the inclusions
\[
\rho(\supp \tau) \subset
\supp \theta_\tau \cap \rho(\Gamma) \subset \rho(\supp(\tau))L,
\]
or equivalently, that
\[
\rho(\gamma) \in \supp \theta_\tau \iff \rho(\gamma)L \subset \rho(\supp \tau)L.
\]
To show this, pick $\gamma \in \Gamma$ and an open identity neighborhood in $H$, which we may assume is contained in $L$. 
If we unwrap the definition of the measure $\theta_\tau$ (see \eqref{defoftheta}), 
we get
\begin{eqnarray*}
\theta_\tau(\rho(\gamma)U) 
&=&
\int_{H/L} \Big( \int_L \chi_{\rho(\gamma)U}(hl) \, dm_L(l) \Big) \, d\overline{\theta}_\tau(hL) \\[0.2cm]
&=&
\sum_{\eta \Lambda} \Big( \int_L \chi_{\rho(\gamma)U}(\rho(\eta) l) \, dm_L(l) \Big) \, \overline{\tau}(\eta \Lambda) \\[0.2cm]
&=&
\sum_{\eta \Lambda} \sum_{\lambda \in \Lambda} \Big( \int_L \chi_{\rho(\gamma)U}(\rho(\eta) l) \, dm_L(l) \Big) \, \tau(\eta \lambda) \, 
\end{eqnarray*}
and thus $\theta_\tau(\rho(\gamma)U) > 0$ if and only if there exist $\eta \in \Gamma$ and $\lambda \in \Lambda$ such that
\[
m_L(\rho(\eta^{-1}\gamma)U \cap L) > 0 \qand \tau(\eta \lambda) > 0.
\]
Since $U$ is open, the first condition is equivalent to saying that $\rho(\eta^{-1}\gamma)U \cap L \neq \emptyset$. Since the second condition is left-$\Lambda$-invariant, and $U$ is an arbitrary clopen identity neighborhood in $L \subset H$, we conclude that
\[
\rho(\gamma) \in \supp \theta_\tau \iff \rho(\gamma)L \subset \rho(\supp \tau)L,
\]
which finishes the proof.

\subsection*{Proof of Property \textsc{(P3)}}

Let $X$ be a Borel $H$-space, $\xi$ a $L$-invariant Borel measure on $X$ and $\tau$ a $\Lambda$-absorbing probability measure on $\Gamma$.
For every bounded Borel function $f$ on $X$, the function 
\[
\varphi_f(h) = h \xi(f), \quad \textrm{$h \in H$}
\] 
is right $L$-invariant, and thus continuous since $H$ is open, whence by \textsc{(P1)} above, 
\[
(\theta_\tau * \xi)(f) = \int_H \varphi_f(h) \, d\theta_\tau(h) = \sum_{\gamma \in \Gamma} \tau(\gamma) \, \rho(\gamma)\xi(f) = (\tau * \xi)(f).
\]
Since $f$ is arbitrary, we conclude that $\theta_\tau * \xi = \tau * \xi$.


\subsection*{Proof of Property \textsc{(P4)}}

Let $\tau$ be a $\Lambda$-absorbing probability measure on $\Gamma$ and fix a Borel $(H,\theta_\tau)$-space $(X,\xi)$. Since 
$\xi$ is $L$-invariant, so is $\xi^{\otimes 4}$ under the diagonal $H$-action on $X^4$. Using that $\tau \mapsto \theta_\tau$
is a monoid homomorphism and property (P3) above, we see that
\[
\tau^{*n} * \xi^{\otimes 4} = \theta_{\tau^{*n}} * \xi^{\otimes 4} = \theta_{\tau}^{*n} * \xi^{\otimes 4}, \quad \textrm{for all $n \geq 1$},
\]
and thus $\xi$ is $\tau$-proximal if and only if it is $\theta_\tau$-proximal by \textsc{(i)} in Proposition \ref{prop_proxcrit}.

\begin{remark}
\label{remark_prox}
Let us highlight a hidden subtlety in the proof of \textsc{(P4)}. In view of our Lemma \ref{lemma_proxtransfer} above, the key point 
is really to
show that if $(M,\eta)$ is a compact model of a $\theta_\tau$-proximal Borel $(H,\theta_\tau)$-space, then $\eta_{\Delta_2}$ is the
only \emph{$\tau$-stationary} $2$-coupling of $(M,\eta)$. However, a compact $H$-space could in general admit $\tau$-stationary 
probability measures, which are not $\theta_\tau$-stationary. To circumvent this issue, we found it necessary to establish Proposition 
\ref{prop_proxcrit}. We stress that we are not aware of an argument which would prove \textsc{(P4)}
for an \emph{arbitrary} Furstenberg discretization of $\theta_\tau$. 
\end{remark}

\section{Proof of Corollary \ref{cor1_Thm2}}
\label{sec:proofCor1Thm2}

Let $(\Gamma,\Lambda)$ be a Hecke pair and $(H,L,\rho)$ a Hecke completion of $(\Gamma,\Lambda)$. Fix a spread-out and 
$\Lambda$-absorbing probability measure $\tau$ on $\Gamma$, and write $(H,\theta_\tau)$ for the Hecke completion of 
$(\Gamma,\tau)$ with respect to $\rho$. Let $(B_\tau,\nu_\tau)$ and $(B_{\theta_\tau},\nu_{\tau})$ denote the Poisson boundaries 
of the measured groups $(\Gamma,\tau)$ and $(H,\theta_\tau)$
respectively.

\subsection*{Proof of \textsc{(I)}}
 
Since $\nu_\tau$ is $\theta_\tau$-proximal, it is also $\tau$-proximal by \textsc{(P4)} in Theorem \ref{thm2}, and thus $(B_{\theta_\tau},\nu_{\tau})$
is a $\tau$-boundary.
 
\subsection*{Proof of \textsc{(II)}}

Let $M$ be a compact $\Gamma$-space and $\eta$ a $\tau$-stationary probability measure on $M$. Suppose that $\eta$ is $\Lambda$-invariant.

\begin{lemma}
\label{lemma_Texists}
There is a $\Gamma$-equivariant, linear and unital bounded map 
\[
T_\eta : C(M) \ra L^\infty(B_{\theta_\tau},\nu_{\theta_\tau}) 
\]
with the property that
\[
\gamma \eta(f) = \rho(\gamma)\nu_{\theta_\tau}(T_\eta f), \quad \textrm{for all $\gamma \in \Gamma$ and $f \in C(M)$}.
\]
\end{lemma}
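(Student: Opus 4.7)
\textbf{Proof plan for Lemma \ref{lemma_Texists}.}

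The plan is to construct $T_\eta$ by passing through the bounded $\theta_\tau$-harmonic functions on $H$ and inverting the Poisson representation. For each $f \in C(M)$, I would define
\[
\psi_f : \Gamma \to \bR, \quad \psi_f(\gamma) = \gamma \eta(f) = \int_M f(\gamma x) \, d\eta(x),
\]
and check, using $\tau * \eta = \eta$, that $\psi_f * \tau = \psi_f$, so $\psi_f \in \cH^\infty(\Gamma,\tau)$. The $\Lambda$-invariance of $\eta$ immediately yields right-$\Lambda$-invariance of $\psi_f$, so via the bijection $\overline{\rho} : \Gamma/\Lambda \to H/L$ (which is a bijection because $L$ is open, $\rho(\Gamma)$ is dense, and hence $\rho(\Gamma)L$ is a clopen set containing $\rho(\Gamma)$, forcing $\rho(\Gamma)L = H$), $\psi_f$ descends to a well-defined function $\overline{\psi}_f$ on $H/L$. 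I then lift it to a right-$L$-invariant function $\widetilde{\psi}_f$ on $H$, which is continuous (indeed locally constant, since the $L$-cosets are open).

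Next I would check that $\widetilde{\psi}_f \in \cH^\infty(H,\theta_\tau)$. Since $\widetilde{\psi}_f$ is right $L$-invariant and continuous, so is the function $h \mapsto \widetilde{\psi}_f(\gamma' h)$ for each $\gamma' \in \Gamma$, hence by property \textsc{(P1)} of Theorem \ref{thm2},
\[
(\widetilde{\psi}_f * \theta_\tau)(\rho(\gamma)) = \int_H \widetilde{\psi}_f(\rho(\gamma)h)\, d\theta_\tau(h) = \sum_{\gamma' \in \Gamma} \widetilde{\psi}_f(\rho(\gamma \gamma'))\, \tau(\gamma') = (\psi_f * \tau)(\gamma) = \psi_f(\gamma) = \widetilde{\psi}_f(\rho(\gamma)).
\]
A quick change-of-variables calculation, using that $\theta_\tau$ is left-$L$-invariant, shows that $\widetilde{\psi}_f * \theta_\tau$ is right-$L$-invariant, so by surjectivity of $\rho(\Gamma) \to H/L$ noted above, the identity above upgrades from $\rho(\Gamma)$ to all of $H$. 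Consequently $\widetilde{\psi}_f$ is bounded and $\theta_\tau$-harmonic.

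By the Poisson representation (Theorem \ref{thm_Poisson}), there is a unique $T_\eta f \in L^\infty(B_{\theta_\tau},\nu_{\theta_\tau})$ such that $P_{\nu_{\theta_\tau}}(T_\eta f) = \widetilde{\psi}_f$. I would then verify the required properties by direct inspection: linearity and unitality of $T_\eta$ follow from the corresponding properties of $f \mapsto \psi_f$ and the linearity of $P_{\nu_{\theta_\tau}}^{-1}$; boundedness follows from $\|\widetilde{\psi}_f\|_\infty \leq \|f\|_\infty$ together with the isometry property of the Poisson transform; and the key identity is
\[
\rho(\gamma)\nu_{\theta_\tau}(T_\eta f) = P_{\nu_{\theta_\tau}}(T_\eta f)(\rho(\gamma)) = \widetilde{\psi}_f(\rho(\gamma)) = \psi_f(\gamma) = \gamma\eta(f).
\]
Finally, $\Gamma$-equivariance is obtained by noting that $\psi_{\gamma f}(\gamma') = \psi_f(\gamma^{-1}\gamma')$ so $\widetilde{\psi}_{\gamma f}(h) = \widetilde{\psi}_f(\rho(\gamma)^{-1}h)$, and then applying the $H$-equivariance of $P_{\nu_{\theta_\tau}}$.

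The main delicate step I expect is the global harmonicity of $\widetilde{\psi}_f$ on $H$, not just on $\rho(\Gamma)$. The cleanest way around this is the combination of the right-$L$-invariance of $\widetilde{\psi}_f * \theta_\tau$ (coming from left-$L$-invariance of $\theta_\tau$) with the surjectivity of $\rho(\Gamma) \to H/L$; without the $\Lambda$-absorption hypothesis on $\tau$ (which makes $\theta_\tau$ left-$L$-invariant), this reduction would fail.
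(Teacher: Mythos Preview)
Your proposal is correct and follows the paper's proof essentially line for line: your $\psi_f$ is the paper's $P_\eta f$, your $\widetilde{\psi}_f$ is the paper's $\varphi_f$, and both arguments define $T_\eta f = P_{\nu_{\theta_\tau}}^{-1}\widetilde{\psi}_f$ after verifying $\theta_\tau$-harmonicity via property \textsc{(P1)}. If anything, you are more explicit than the paper about the ``delicate step'' (upgrading harmonicity from $\rho(\Gamma)$ to all of $H$ via right-$L$-invariance of $\widetilde{\psi}_f * \theta_\tau$), which the paper compresses into the phrase ``applied to left-translates of $\varphi_f$''.
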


\begin{proof}
Since $\eta$ is $\Lambda$-invariant, for every $f \in C(M)$, the Poisson transform $P_\eta f$ is a bounded, $\tau$-harmonic and right $\Lambda$-invariant
function on $\Gamma$. Since 
\[
\overline{\rho} : \Gamma/\Lambda \ra H/L, \enskip \gamma \Lambda \mapsto \rho(\gamma)L,
\]
is a bijection, $\varphi_f(h) := P_\eta f(\overline{\rho}^{-1}(hL))$ is a well-defined bounded and right $L$-invariant (hence continuous) 
function on $H$, which satisfies $\varphi_f \circ \rho = P_\eta f$. By Property \textsc{(P1)} in Theorem \ref{thm2}, applied to left-translates of $\varphi_f$, we conclude that $\varphi_f$ is $\theta_\tau$-harmonic. By Theorem \ref{thm_Poisson}, the Poisson transform $P_{\nu_{\theta_\tau}}$ sets up an isometric bijection between $L^\infty(B_{\theta_\tau},\nu_{\theta_\tau})$ and $\cH^\infty(H,\theta_\tau)$, so we can define $T_\eta : C(M) \ra L^\infty(B_{\theta_\tau},\nu_{\theta_\tau})$ by
\[
T_\eta f = P_{\nu_{\theta_\tau}}^{-1}\varphi_f, \quad \textrm{for $f \in C(M)$},
\]
which is clearly $\Gamma$-equivariant and bounded, and satisfies $\gamma \eta(f) = \rho(\gamma)\nu_{\theta_\tau}(T_\eta f)$ for all $\gamma \in \Gamma$ and $f \in C(M)$.
\end{proof}

We recall the following notation from Subsection \ref{subsec:canonicalquasi}: if $f \in C(M)$ and $\kappa \in \Prob(M)$, 
we define $\widehat{f}(\kappa) = \kappa(f)$. If we endow $\Prob(M)$ with the weak*-topology, $\widehat{f}$ is a continuous 
function on $\Prob(M)$.

\begin{corollary}
\label{cor_Texists}
There is a measure-preserving $\Gamma$-map $\pi_\eta : (B_{\theta_\tau},\nu_{\theta}) \ra (\Prob(M),(\pi_\eta)_*\nu_\theta)$ such that
\[
\widehat{f} \circ \pi_\eta = T_\eta f, \quad \textrm{for all $f \in C(M)$},
\]
in the $L^\infty$-sense. 
\end{corollary}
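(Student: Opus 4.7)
The plan is to obtain $\pi_\eta$ by a pointwise disintegration of the operator $T_\eta$ from Lemma \ref{lemma_Texists}, using that $T_\eta$ is positive, unital and contractive, so that for $\nu_{\theta_\tau}$-almost every $b$ the evaluation $f \mapsto (T_\eta f)(b)$ is a state on $C(M)$ and hence corresponds, via the Riesz representation theorem, to a Borel probability measure $\pi_\eta(b)$ on $M$.

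More concretely, since $M$ is compact metrizable, $C(M)$ is separable; I will fix a countable $\bQ$-linear subspace $D \subset C(M)$ containing the constant function $1$ and dense in $C(M)$, and for each $f \in D$ choose a Borel representative $\widetilde{T}_\eta f$ of $T_\eta f$. Unitality gives $T_\eta 1 = 1$ almost surely, positivity (inherited from the Poisson transforms $P_\eta$ and $P_{\nu_{\theta_\tau}}$) gives $\widetilde{T}_\eta f \geq 0$ a.s.\ whenever $f \geq 0$ in $D$, and $\bQ$-linearity is preserved by the representative choice off a countable union of null sets. Intersecting the countably many exceptional sets produces a single $\nu_{\theta_\tau}$-conull Borel set $B_0 \subset B_{\theta_\tau}$ on which $f \mapsto \widetilde{T}_\eta f(b)$ is a positive $\bQ$-linear functional on $D$ with $\widetilde{T}_\eta 1(b) = 1$; the contractivity of $T_\eta$ lets me extend it continuously to a positive unital functional on $C(M)$, and Riesz produces the probability measure $\pi_\eta(b) \in \Prob(M)$. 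Setting $\pi_\eta(b) = \delta_{m_0}$ (for an arbitrary fixed $m_0 \in M$) outside $B_0$ yields a Borel map $\pi_\eta : B_{\theta_\tau} \to \Prob(M)$, since $b \mapsto \pi_\eta(b)(f) = \widehat{f}(\pi_\eta(b))$ is Borel for each $f \in D$ and the maps $\{\widehat{f}\}_{f \in D}$ generate the Borel $\sigma$-algebra on $\Prob(M)$ for the weak-$*$ topology.

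The identity $\widehat{f} \circ \pi_\eta = T_\eta f$ in $L^\infty$ is then immediate for $f \in D$ by construction, and extends to all $f \in C(M)$ by density and the norm-continuity of both sides. For $\Gamma$-equivariance, I note that $\Gamma$ acts on $B_{\theta_\tau}$ through $\rho$ and on $\Prob(M)$ in the usual way, and that $T_\eta$ is $\Gamma$-equivariant by Lemma \ref{lemma_Texists}; after replacing $B_0$ by the (still conull) intersection $\bigcap_{\gamma \in \Gamma} \rho(\gamma)^{-1} B_0$, one computes
\[
\pi_\eta(\rho(\gamma) b)(f) = (T_\eta f)(\rho(\gamma) b) = (T_\eta(\gamma^{-1} f))(b) = \pi_\eta(b)(\gamma^{-1} f) = (\gamma \pi_\eta(b))(f)
\]
for all $f \in D$ and $b \in B_0$, which is enough. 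Finally, the measure-preserving property is tautological because the target measure on $\Prob(M)$ is defined to be $(\pi_\eta)_*\nu_{\theta_\tau}$.

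The main technical obstacle is the usual null-set bookkeeping: $T_\eta f$ is only defined modulo $\nu_{\theta_\tau}$-null sets, so one has to exploit separability of $C(M)$ together with countability of $\Gamma$ to pass from ``almost sure'' statements indexed by individual $f$'s and $\gamma$'s to a single conull $\Gamma$-invariant set on which all the desired relations hold pointwise. Once this is handled, the rest of the argument is a direct application of Riesz representation.
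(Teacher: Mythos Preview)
Your proposal is correct and follows essentially the same approach as the paper: both use separability of $C(M)$ to choose Borel representatives of $T_\eta f$ on a countable dense set, invoke Riesz to obtain $\pi_\eta(b)$ pointwise, and handle $\Gamma$-equivariance via countability of $\Gamma$. The only organizational difference is that the paper takes its countable dense subset of $C(M)$ to be closed under the $\Gamma$-action from the outset, which makes the equivariance verification a bit cleaner than your route through $\bigcap_{\gamma}\rho(\gamma)^{-1}B_0$.
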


\begin{proof}
Since $M$ is compact and metrizable, $C(M)$ (equipped with the uniform norm) is separable Banach space. Fix a countable dense
subset $(f_i)$ in $C(M)$. Since $\Gamma$ is countable, we may without loss of generality assume that this set is closed under composition
with elements from $\Gamma$. Let us choose, for every $i$, a 
Borel function $\widetilde{T_\eta f_i}$ on $B_{\theta_\tau}$ which represents the function class $T_\eta f_i$ in $L^\infty(B_{\theta_\tau},\nu_{\theta_\tau})$.
Since $\Gamma$ is countable, we can find a $\Gamma$-invariant $\nu_{\theta_\tau}$-conull subset $B' \subset B_{\theta_\tau}$ such that for every $i$
and $\gamma \in \Gamma$,
\begin{equation}
\label{preleq}
\widetilde{T_\eta(f_i \circ \gamma)}(b) = \widetilde{T_\eta f_i}(\gamma b), \quad \textrm{for all $b \in B'$}.
\end{equation}
One readily checks that the functionals on the $\bQ$-linear span of $(f_i)$, defined by
\[
\pi_\eta(b)(f_i) := \widetilde{T_\eta f_i}(b), \quad \textrm{for $b \in B'$},
\]
for every $i$, are bounded, positive and unital, and thus extend by a straightforward approximation argument, to probability measures on $M$ (via Riesz representation theorem). By dominated convergence and \eqref{preleq}, $\pi_\eta$ is a $\Gamma$-equivariant map from $B'$ to $\Prob(M)$, 
which satisfies $\widehat{f} \circ \pi_\eta = T_\eta f$ for all $f \in C(M)$ (in the $L^\infty$-sense) by construction.
\end{proof}

It remains to prove that $(\pi_\eta)_*\nu_{\theta_\tau} = \eta^*$, where $\eta^*$ denote the canonical quasi-factor associated with $(M,\eta)$, as
introduced in Subsection \ref{subsec:canonicalquasi}. By the discussion in this subsection, this amounts to showing
\[
\eta^*(\widehat{f}_1 \cdots \widehat{f}_k)  = (\pi_\eta)_*\nu_{\theta_\tau}(\widehat{f}_1 \cdots \widehat{f}_k),
\]
for all $f_1,\ldots,f_k \in C(M)$ and $k \geq 1$. By Corollary \ref{cor_Texists}, we have
\[
(\pi_\eta)_*\nu_{\theta_\tau}(\widehat{f}_1 \cdots \widehat{f}_k) 
=
\nu_{\theta_\tau}(\widehat{f_1} \circ \pi_{\eta} \cdots \widehat{f_k} \circ \pi_{\eta}) \\[0.2cm] 
=
\nu_{\theta_\tau}(T_\eta f_1 \cdots T_\eta f_k),
\]
for all $f_1,\ldots,f_k \in C(M)$ and $k \geq 1$, so it suffices to show that 
\[
\eta^*(\widehat{f}_1 \cdots \widehat{f}_k) = \nu_{\theta_\tau}(T_\eta f_1 \cdots T_\eta f_k).
\]
Let us from now on fix $k$ and a $k$-tuple $f_1,\ldots,f_k$ in $C(M)$. By Lemma \ref{lemma_canonicalquasifactor},
\[
\eta^*(\widehat{f}_1 \cdots \widehat{f}_k) 
=
\lim_{n \ra \infty} (\tau^{*n} * \eta^{\otimes k})(f_1 \otimes \cdots \otimes f_k).
\]
Using Lemma \ref{lemma_Texists}, we see that
\begin{eqnarray*}
(\tau^{*n} * \eta^{\otimes k})(f_1 \otimes \cdots \otimes f_k) 
&=&
\sum_{\gamma \in \Gamma} \gamma \eta(f_1) \cdots \gamma \eta(f_k) \, \tau^{*n}(\gamma) \\[0.2cm]
&=&
\sum_{\gamma \in \Gamma} \rho(\gamma) \nu_{\theta_\tau}(T_\eta f_1) \cdots \rho(\gamma) \nu_{\theta_{\tau}}(T_\eta f_k) \, \tau^{*n}(\gamma),
\end{eqnarray*}
for all $n$. Since $\theta_\tau$ is $L$-invariant, $h \mapsto h \nu_{\theta_\tau}(f_j)$ is right $L$-invariant for every $j = 1,\ldots,k$, 
and thus by Property \textsc{(P1)} in Theorem \ref{thm2}, we have
\begin{eqnarray*}
\sum_{\gamma \in \Gamma} \rho(\gamma) \nu_{\theta_\tau}(T_\eta f_1) \cdots \rho(\gamma) \nu_{\theta_{\tau}}(T_\eta f_k) \, \tau^{*n}(\gamma)
&=&
\int_H h \nu_{\theta_\tau}(T_\eta f_1) \cdots h \nu_{\theta_{\tau}}(T_\eta f_k) \, d\theta_{\tau}^{*n}(h) \\
&=&
(\theta_\tau^{*n} * \nu_{\theta_\tau}^{\otimes k})(T_\eta f_1 \otimes \cdots \otimes T_\eta f_k),
\end{eqnarray*}
for all $n$. Since $(B_{\theta_\tau},\nu_\theta)$ is $\theta_\tau$-proximal, Proposition \ref{prop_proximalktuples} tells us that
\[
\lim_n (\theta_\tau^{*n} * \nu_{\theta_\tau}^{\otimes k})(T_\eta f_1 \otimes \cdots \otimes T_\eta f_k) = \nu_{\theta_\tau}(T_\eta f_1 \cdots T_\eta f_k),
\]
and thus
\[
\lim_n (\tau^{*n} * \eta^{\otimes k})(f_1 \otimes \cdots \otimes f_k) = \nu_{\theta_\tau}(T_\eta f_1 \cdots T_\eta f_k).
\]
From the discussion above, this proves that $(\pi_\eta)_*\nu_{\theta_\tau} = \eta^*$. \\

Finally, if $(M,\eta)$ is $\tau$-proximal, then Proposition \ref{prop_proximalityandcanonicalquasi} \textsc{(II)} shows that $(\Prob(M),\eta^*)$ is 
$\Gamma$-isomorphic to $(M,\eta)$, whence the argument above shows that $(M,\eta)$ is a $\theta_\tau$-boundary. 

\subsection*{Proof of \textsc{(III)}}

The $\Gamma$-action on $(B_\tau,\nu_\tau)$ is amenable by \cite[Theorem 5.2]{Zi0}, and thus the $\Lambda$-action is amenable as well by
\cite[Theorem 4.3.5]{Zi}. If $(B_\tau,\nu_\tau)$ is measurably  $\Gamma$-isomorphic to $(B_{\theta_\tau},\nu_{\theta_\tau})$, then, since 
$\nu_{\theta_\tau}$ is $L$-invariant and $\Lambda = \rho^{-1}(L)$, we see that $\nu_\tau$ is $\Lambda$-invariant. 
By \cite[Proposition 4.3.3]{Zi}, this forces $\Lambda$ to be an amenable group.

\subsection*{Proof of \textsc{(IV)}}

Suppose that $\Lambda$ is an amenable group and that $(B_\tau,\nu_\tau)$ admits a compact model $(M,\eta)$ which is uniquely 
$\tau$-stationary. Since $\tau$ is $\Lambda$-absorbing and $\Lambda$ is amenable, \textsc{(ii)} in Theorem \ref{thm1} applied to the 
weak*-compact set $C = \Prob(M)$, asserts that there is at least one $\tau$-stationary and $\Lambda$-invariant probability measure 
on $M$. By unique $\tau$-stationarity, we conclude that $\eta$ is $\Lambda$-invariant. Since $\eta$ is $\tau$-proximal, 
\textsc{(II)} above implies that $(M,\eta)$ is a $\theta_\tau$-boundary. Furthermore, by \textsc{(I)} 
above, $(B_{\theta_\tau},\nu_{\theta_\tau})$ is a $\tau$-boundary. We conclude that there are measure-preserving $\Gamma$-maps
\[
(B_\tau,\nu_\tau) \overset{\pi_1}\longrightarrow (B_{\theta_\tau},\nu_{\theta_\tau}) \overset{\pi_2}\longrightarrow (B_\tau,\nu_\tau).
\]
In particular, $\pi := \pi_2 \circ \pi_1 : (B_\tau,\nu_\tau) \ra (B_\tau,\nu_\tau)$ is a measure-preserving $\Gamma$-map, whence, 
by \cite[Proposition 3.2]{FuGl},  almost everywhere equal to the identity map, and thus $(B_\tau,\nu_\tau)$ and $(B_{\theta_\tau},\nu_{\theta_\tau})$ 
are measurably $\Gamma$-isomorphic. 

\section{Proof of Corollary \ref{cor2_thm2}}
\label{sec:cor2thm2}

Let $p \geq 2$ be a prime number. By \cite[Corollary 1.4]{BrGe}, there is a free group $\Gamma$ of finite rank $r = r(p)$ and an \emph{injective} homomorphism $\rho : \Gamma \ra \SL_2(\bQ_p)$ with dense image (one can also appeal to Ihara's Theorem \cite{IY} which in particular implies that every torsion-free subgroup of $\SL_2(\bQ_p)$ is free). Set 
\[
H = \SL_2(\bQ_p) \qand L = \SL_2(\bZ_p) \qand \Lambda = \rho^{-1}(L).
\]
Then $(\Gamma,\Lambda)$ is a Hecke pair and $(H,L,\rho)$ is a completion triple of $(\Gamma,\Lambda)$. Let $P$ denote the closed solvable (hence amenable) subgroup of $H$ consisting of upper triangular matrices. Since $H = LP$, there is a unique $L$-invariant Borel probability measure $\nu_o$
on $H/P$. Let $\tau$  be a 
spread-out and finitely supported $\Lambda$-absorbing probability measure on $\Gamma$. Such a measure exists by \textsc{(III)} in Theorem \ref{thm1}. By \textsc{(P2)} in Theorem \ref{thm2}, $\theta_\tau$ is spread-out and compactly supported, and by \cite[Theorem 15.5]{GuJiTa}, the quotient space 
$(H/P,\nu_o)$ is the Poisson boundary of $(H,\theta_\tau)$. By \textsc{(I)} in Corollary \ref{thm1}, $(H/P,\nu_o)$ is a $\tau$-boundary.  We note that
\[
\Stab_\Gamma(hP) = \rho^{-1}(\Gamma \cap hPh^{-1}), \quad \textrm{for all $hP \in H/P$}.
\]
In particular, $\Stab_\Gamma(hP)$ is an amenable subgroup of $\Gamma$ for every $hP \in H/P$, since $\rho$ is injective and $P$ is solvable. Since
a free group of finite rank only has countably many amenable subgroups (the trivial group and cyclic subgroups), the map $hP \mapsto \Stab_\Gamma(hP)$
has a countable image, whence must be essentially trivial (see e.g. \cite[Proposition 1.3]{BoHaTa}), and thus the $\Gamma$-action on $(H/P,\nu_o)$ is essentially free.

%

It remains to prove that $(H/P,\nu_o)$ is a \emph{prime} $(\Gamma,\tau)$-space. By Corollary \ref{cor_densesubgroupfactorhomo} this amounts to 
showing that $P$ is a maximal proper closed subgroup of $H$. The \emph{Bruhat decomposition} of $H$ with respect to $P$ tells us that
\[
H = P \, \sqcup \, PwP, \quad \textrm{where} \quad
w = \left(
\begin{matrix} 0 & -1 \\ 1 & 0 \end{matrix}
\right).
\]
This decomposition demonstrates two things. Firstly, the group generated by $P$ and $w$ equals $H$. Secondly, any subgroup which strictly contains
$P$ must contain the element $w$, whence also the group generated by $P$ and $w$. In particular, $P$ is a maximal (not necessarily closed) subgroup of $H$, and we are done.

\section{Proof of Corollary \ref{cor3_thm2}}
\label{sec:cor3thm2}
Let $(\Gamma,\Lambda)$ be a Hecke pair. We shall assume that $\Lambda$ is non-amenable and not co-amenable in $\Gamma$. The latter
assumption guarantees by \cite[Proposition 3.4]{AD} that there exists a completion triple $(H,L,\rho)$ of $(\Gamma,\Lambda)$ with
$H$ \emph{non-amenable}.  

Let us fix a spread-out $\Lambda$-absorbing probability measure on $\Gamma$. Since $\Lambda$ is non-amenable,
so is $\Gamma$, and thus the Poisson boundary $(B_\tau,\nu_\tau)$ of $(\Gamma,\tau)$ is non-trivial. Let $(H,\theta_\tau)$ denote the 
Hecke completion of $(\Gamma,\tau)$ with respect to $\rho$. Since $H$ is non-amenable, the Poisson boundary 
$(B_{\theta_\tau},\nu_{\theta_\tau})$ of $(H,\theta_\tau)$ is non-trivial. By \textsc{(I)} in Corollary \ref{cor1_Thm2}, 
$(B_{\theta_\tau},\nu_{\theta_\tau})$ is thus a non-trivial $\tau$-boundary. Since $\Lambda$ is non-amenable, Corollary \ref{cor1_Thm2} \textsc{(III)} 
tells us $(B_{\tau},\nu_{\tau})$ and $(B_{\theta_\tau},\nu_{\theta_\tau})$ cannot be measurably $\Gamma$-isomorphic. We conclude that 
$(\Gamma,\tau)$ is not a prime measured group.

\section{Proof of Theorem \ref{thm3}}

Let $(K,|\cdot|)$ be a non-Archimedean local field with a prime residue field, and fix a uniformizer $\varpi$ of $K$. We retain the notation from 
Subsection \ref{subsec:grpsaffine}. In particular,
\[
\Gamma = \Xi \rtimes \langle \varpi \rangle \qand \Lambda = \Xi_o \rtimes \{1\},
\]
and
\[
H = K \rtimes  \langle \varpi \rangle \qand L = \cO \rtimes \{1\},
\]
where $\Xi_o$ and $\Xi$ are the additive subgroups of $K$ generated by all powers of $\varpi$ and all positive powers of $\varpi$ respectively.
We shall think of $\Gamma$ as a dense subgroup of $H$, and we recall that $H$ acts transitively on $K$ via
\[
(x,\varpi^n)y = x + \varpi^n y, \quad \textrm{for $(x,\varpi^n) \in H$ and $y \in K$},
\]
so that $K \cong H/P$, with $P = \{0\} \rtimes \langle \varpi \rangle$. 

\begin{lemma}
\label{lemma_Pismaximal}
$P$ is a maximal closed subgroup of $H$.
\end{lemma}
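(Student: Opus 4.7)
The plan is to show directly that if $Q$ is a closed subgroup of $H$ with $P \subsetneq Q$, then $Q = H$. Recall that multiplication in $H = K \rtimes \langle \varpi \rangle$ is given by
\[
(x,\varpi^n)(y,\varpi^m) = (x + \varpi^n y, \varpi^{n+m}),
\]
and $P = \{0\} \rtimes \langle \varpi \rangle$. Fix any $(x,\varpi^n) \in Q \setminus P$, so that $x \neq 0$. First, I would left-multiply by $(0,\varpi^{-n}) \in P \subseteq Q$ to obtain $(\varpi^{-n} x,1) \in Q$. Writing $y := \varpi^{-n} x \in K^{\times}$, we have therefore produced an element $(y,1) \in Q$ with $y \neq 0$.

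Next, conjugating $(y,1)$ by $(0,\varpi^m) \in P$ gives $(\varpi^m y, 1)$, so $\{(\varpi^m y, 1) : m \in \bZ\} \subseteq Q$. Since $Q \cap (K \times \{1\})$ is a subgroup of $K \times \{1\} \cong K$, it contains the additive subgroup generated by $\{\varpi^m y : m \in \bZ\}$, which is exactly $y \cdot \Xi$.

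Now the key point: by the discussion in Subsection \ref{subsec:grpsaffine}, the group $\Xi$ generated by $0$ and the powers of $\varpi$ is \emph{dense} in $K$ (indeed, every element of $K$ has a convergent $\varpi$-adic expansion with coefficients in $S \subset \Xi$). Since multiplication by the nonzero element $y$ is a homeomorphism of $K$, the set $y \cdot \Xi$ is dense in $K$ as well. As $Q$ is closed in $H$, this forces $K \times \{1\} \subseteq Q$. Combined with $P = \{0\} \times \langle \varpi \rangle \subseteq Q$, we conclude that every $(a,\varpi^k) = (a,1)(0,\varpi^k)$ lies in $Q$, hence $Q = H$.

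There is no real obstacle here beyond unwinding the semidirect product multiplication and invoking the density of $\Xi$ in $K$; the proof is a short direct computation.
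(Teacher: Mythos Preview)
Your proof is correct and follows essentially the same approach as the paper's: both arguments pick a nonzero $K$-component from $Q \setminus P$, use the $P$-action (via conjugation or the double coset $P(x,1)P$) to produce all elements $(\varpi^m y,1)$, and then invoke density of $y\cdot\Xi$ in $K$ together with closedness of $Q$ to conclude $Q = H$. Your version is a bit more explicit in unwinding the semidirect-product arithmetic, but the idea is identical.
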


\begin{proof}
Suppose that $Q$ is a proper closed subgroup which strictly contains $P$. Then there exists a non-zero $x \in K$ such that 
$P(x,1)P \subset Q$, and thus 
\[
x\Xi \rtimes \langle \varpi \rangle \subset Q,
\]
since $\Xi$ is equal the additive group generated by all powers of $\varpi$. By assumption, $\Xi$ is dense in $K$, whence the left-hand side
is dense in $H$. Since $Q$ is closed, we see that $Q = H$.

\end{proof}

Throughout the rest of this section, we fix a finitely supported, spread-out and $\Lambda$-absorbing probability measure $\tau$ on $\Gamma$ which
satisfies 
\begin{equation}
\label{negdrGamma}
\sum_{n \in \bZ} n \, (\pr_{\bZ})_*\tau(n) < 0,
\end{equation}
where $\pr_{\bZ}$ is defined in \eqref{def_prZ}. We write $(H,\theta_\tau)$ for the 
Hecke completion of the Hecke measured group $(\Gamma,\Lambda)$, and note that since $\pr_{\bZ}$ is a homomorphism, 
\begin{equation}
\label{negdrH}
\sum_{n \in \bZ} n \, (\pr_{\bZ})_*\theta_\tau(n) < 0,
\end{equation}
by \textsc{(P1)} in Theorem \ref{thm2}.

\subsection{Proofs of (\textsc{I}) and (\textsc{II}) and (\textsc{III})}

Since both $\Gamma$ and $H$ are non-exceptional subgroups of the affine group of $K$ (in the sense of \cite{CKW}), the supports of 
$\tau$ and $\theta_\tau$ generate (as semi-groups) $\Gamma$ and $H$ respectively and \eqref{negdrGamma} and \eqref{negdrH} hold,  
\cite[Theorem 2]{CKW} tells us that \vspace{0.1cm}
\begin{itemize}
\item[\textsc{(F1)}] there is a unique $\tau$-stationary probability measure $\nu$ on $K$. \vspace{0.1cm}
\item[\textsc{(F2)}] there is a unique $\theta_\tau$-stationary probability measure $\nu'$ on $K$. \vspace{0.1cm}
\item[\textsc{(F3)}] $\nu$ is $\tau$-proximal and $\nu'$ is $\theta_\tau$-proximal.
\end{itemize}
\vspace{0.1cm}
By \textsc{(P3)} in Theorem \ref{thm2}, $\nu'$ is also $\tau$-stationary, whence $\nu = \nu'$ by uniqueness. Furthermore, since $\theta_\tau$
is spread-out, $\nu$ is $H$-quasi-invariant by \textsc{(i)} in Lemma \ref{lemma_basicprops}. Since $H$ acts transitively on $K$ and the Haar 
measure $m_K$ on $K$ is $H$-quasi-invariant, we conclude (from the uniqueness of $H$-invariant measure classes) that $\nu$ is absolutely
continuous with respect to $m_K$. We can thus write $d \nu = u dm_K$ for some non-negative $u \in L^1(K,m_K)$. Since $\theta_\tau$ is 
bi-$L$-invariant, and both $\nu$ and $m_K$ are $L$-invariant, $u$ is $L$-invariant and thus (almost everywhere equal to) a continuous function 
on $K$, which we still denote by $u$. We claim that $u$ is strictly positive. If it were not, then since it is $L$-invariant, the zero set of $u$ must be 
a non-empty $\cO$-invariant set, and thus $\nu(\cO + t) = 0$ for \emph{some} $t \in K$. However, since $\nu$ is $H$-quasi-invariant, this readily implies that $\nu(\cO+t) = 0$ for \emph{all} $t \in K$. However, this leads to a contradiction. Indeed, since $\cO$ is an open (additive) subgroup $K$, the quotient 
space $K/\cO$ must be countable. In particular, we can find a countable set $T \subset K$ such that $K = \cup_{t \in T} (\cO + t)$. Hence,
\[
1 = \nu(K) = \nu\big(\bigcup_{t \in T} \big(\cO + t\big)\big) \leq \sum_{t \in T} \nu(\cO + t) = 0,
\]
and thus we conclude that $u$ is strictly positive. Finally, by Lemma \ref{lemma_Pismaximal}, $P$ is a maximal closed subgroup of $H$,  and 
since $K \cong H/P$, Corollary \ref{cor_densesubgroupfactorhomo} tells us that the Borel $(\Gamma,\tau)$-space $(K,\nu)$ is prime.

\subsection{Proof of (\textsc{IV})}

We begin with a  few general remarks. Let $G$ be a lcsc group and suppose that $(X,\cB_X)$ is a measurable space endowed with a jointly
measurable action of $G$. Let $\xi$ be a $G$-quasi-invariant $\sigma$-finite measure on $\cB_X$.  Fix $p \geq 1$ be a real number. The 
\emph{regular $L^p$-representation} $(L^p(X,\xi),\sigma_{p,\xi})$ is given by
\[
\sigma_{p,\xi}(g) f = \Big(\frac{dg\xi}{d\xi}\Big)^{1/p} f \circ g^{-1}, \quad \textrm{for $g \in G$ and $f \in L^p(X,\xi)$}.
\]
We say that $(L^p(X,\xi),\sigma_{p,\xi})$ is \emph{$L^p$-irreducible} if for every non-zero $f \in L^p(X,\xi)$, the linear span of the set 
$\sigma_{\xi,p}(G)f$ is norm-dense in $L^p(X,\xi)$. Equivalently, $(L^p(X,\xi),\sigma_{p,\xi})$
is \emph{not} $L^p$-irreducible if there exist non-zero $f_1 \in L^p(X,\xi)$ and $f_2 \in L^{p'}(X,\xi)$, where $\frac{1}{p} + \frac{1}{p'} = 1$,
such that
\begin{equation}
\label{Lpreducible}
\int_X \Big(\frac{dg\xi}{d\xi}\Big)^{1/p} f_1 \circ g^{-1} \, \overline{f_2} \, d\xi = 0, \quad \textrm{for all $g \in G$}.
\end{equation}
We note that \eqref{Lpreducible} only depends on the measure class of $\xi$: suppose that $\xi'$ is another $G$-quasi-invariant 
$\sigma$-finite measure on $X$, which is equivalent to $\xi$, and write $d\xi' = u \, d\xi$, for some ($\xi$-almost everywhere) strictly positive function $u$. Then,
\[
F_1 := u^{-1/p} f_1 \in L^p(X,\xi') \qand F_2 := u^{-1/p'} f_2 \in L^{p'}(X,\xi'),
\]
and for every $g \in G$,
\[
\frac{dg\xi'}{d\xi'} = \frac{u \circ g^{-1}}{u} \, \frac{dg\xi}{d\xi}, \quad \textrm{$\xi$-almost everywhere},
\]
which readily implies that, for every $g \in G$,
\[
\int_X \Big(\frac{dg\xi'}{d\xi'}\Big)^{1/p} F_1 \circ g^{-1} \, \overline{F_2} \, d\xi'
\]
equals
\[
\int_X \Big(\frac{u \circ g^{-1}}{u}\Big)^{1/p} \, \Big(\frac{dg\xi}{d\xi}\Big)^{1/p} \, (u \circ g^{-1})^{-1/p} f_1 \circ g^{-1} \, u^{-1/p'} \overline{f_2} \,u d\xi
\]
which simplifies to
\[
\int_X \Big(\frac{dg\xi}{d\xi}\Big)^{1/p} f_1 \circ g^{-1} \, \overline{f_2} \, d\xi,
\]
since $\frac{1}{p} + \frac{1}{p'} = 1$.
In particular, 
\[
\int_X \Big(\frac{dg\xi'}{d\xi'}\Big)^{1/p} F_1 \circ g^{-1} \, \overline{F_2} \, d\xi' = \int_X \Big(\frac{dg\xi}{d\xi}\Big)^{1/p} f_1 \circ g^{-1} \, \overline{f_2} \, d\xi,
 \quad \textrm{for all $g \in G$}.
\]

\vspace{0.1cm}
\subsubsection*{\textsc{$m_K$-decoupled pairs}}
\vspace{0.1cm}

Let $\Delta: G \ra \bR^*_{+}$ be a continuous homomorphism. We say that a $\sigma$-finite and $G$-quasi-invariant  measure $\xi'$ on $X$ is \emph{$\Delta$-invariant}
if 
\[
g\xi' = \Delta(g)\xi', \quad \textrm{for all $g \in G$},
\] 
or equivalently, if $\frac{dg\xi'}{d\xi'} = \Delta(g)$, almost everywhere. It follows from the discussion above
that if $\xi$ is equivalent to a $\Delta$-invariant $\sigma$-finite measure $\xi'$, then $(L^p(X,\xi),\sigma_{p,\xi})$ is \emph{not} $L^p$-irreducible if and only
if there are non-zero $F_1 \in L^p(X,\xi')$ and $F_2 \in L^{p'}(X,\xi')$ such that
\[
\int_X F_1 \circ g^{-1} \, \overline{F_2} \, d\xi' = 0, \quad \textrm{for all $g \in G$}.
\]
Let us from now on assume that $X$ is a locally compact and second countable space, and that $G$ acts jointly continuously on $X$ and
there is a $\Delta$-invariant $\sigma$-finite Borel measure $\xi'$ on $X$, which is equivalent to $\xi$. We say that a pair $(F_1,F_2)$ of non-zero \emph{compactly supported} continuous functions on $X$ are \emph{$\xi'$-decoupled} if 
\[
\int_X F_1 \circ g^{-1} \, \overline{F_2} \, d\xi' = 0, \quad \textrm{for all $g \in G$}.
\]
Since $F_1$ and $F_2$ are compactly supported and continuous, they belong to $L^p(K,\xi')$ for every $p \in [1,\infty]$.
The following lemma is immediate from our discussion above.
\begin{lemma}
\label{lemma_decopuled}
If a $\xi'$-decoupled pair exists, then $(L^p(X,\xi),\sigma_{\xi,p})$ is \emph{not} $L^p$-irreducible for any $p \in [1,\infty)$.
\end{lemma}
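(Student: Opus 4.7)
The plan is to observe that Lemma \ref{lemma_decopuled} is a direct bookkeeping consequence of the discussion carried out just before its statement. That discussion reduces the failure of $L^p$-irreducibility of $(L^p(X,\xi),\sigma_{p,\xi})$ to the existence of non-zero $F_1 \in L^p(X,\xi')$ and $F_2 \in L^{p'}(X,\xi')$, with $\tfrac{1}{p}+\tfrac{1}{p'}=1$, satisfying
\[
\int_X F_1 \circ g^{-1}\, \overline{F_2}\, d\xi' = 0, \quad \textrm{for every $g \in G$}.
\]
A $\xi'$-decoupled pair is, by definition, precisely such a witness in which the two functions are in addition compactly supported and continuous.

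My plan is therefore: fix an arbitrary $p \in [1,\infty)$, let $p' \in (1,\infty]$ be its H\"older conjugate, and let $(F_1,F_2)$ be a $\xi'$-decoupled pair. I first note, as already observed in the excerpt, that compactly supported continuous functions on $X$ belong to $L^q(X,\xi')$ for every $q \in [1,\infty]$; in particular $F_1 \in L^p(X,\xi')$ and $F_2 \in L^{p'}(X,\xi')$, and both are non-zero. The vanishing condition is the very hypothesis defining $\xi'$-decoupling, so the preceding reduction yields that $(L^p(X,\xi),\sigma_{p,\xi})$ fails to be $L^p$-irreducible. Since $p$ was arbitrary in $[1,\infty)$, the conclusion holds for every such $p$.

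There is essentially no obstacle in the proof of the lemma itself: the only non-trivial step is the change-of-density calculation linking orthogonality against $\xi$ to orthogonality against the $\Delta$-invariant measure $\xi'$, and this calculation has already been performed in the excerpt. The point of isolating the notion of a $\xi'$-decoupled pair is precisely to make the later construction of explicit non-zero compactly supported continuous witnesses --- which is where the real work will lie --- logically independent of any further $L^p$-theoretic bookkeeping.
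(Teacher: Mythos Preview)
Your proposal is correct and matches the paper's approach exactly: the paper simply states that the lemma ``is immediate from our discussion above,'' and your write-up spells out precisely this immediacy by noting that a $\xi'$-decoupled pair provides, for every $p \in [1,\infty)$, the required non-zero $F_1 \in L^p(X,\xi')$ and $F_2 \in L^{p'}(X,\xi')$ satisfying the orthogonality condition.
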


Let us now specialize to the setting of Theorem \ref{thm3}, where
\[
G = H \qand X = K \qand \xi = \nu \qand \xi' = m_K,
\]
where $\nu$ is the unique $\tau$-stationary and $\theta_\tau$-stationary probability measure on $K$ (and equivalent to $m_K$ by \textsc{(II)}). \\

Set $\Delta_K(x,\varpi^n) = q^n$, where $q = |\cO/\cP|$ is a prime number. A straightforward calculation shows that $m_K$ is 
$\Delta_K$-invariant. To prove Theorem \ref{thm3}, it suffices by Lemma \ref{lemma_decopuled} to show that $(K,m_K)$ admits a $m_K$-decoupled pair. This 
is done in the  following lemma. Let $K^*$ denote the multiplicative group of $K$, and write
\[
\cO^* = \{ x \in \cO \, \mid \, |x| = 1 \big\}
\]
for the multiplicative group of the open sub-ring $\cO \subset K$, and $\widehat{K}$ for the (additive) dual of $K$, i.e. the multiplicative group of 
continuous homomorphisms from $(K,+)$ into the circle group $\bS^1 \subset \bC^*$.

\begin{lemma}
\label{z1z2}
For all $z_1, z_2 \in \cO^*$ and $\lambda \in \widehat{K}$ such that 
\[
|z_1 - z_2| \geq \frac{1}{q}  \qand \lambda |_{\varpi \cO} \neq 1,
\] 
we have
\[
\int_K F_{z_1}(y+\varpi^m x) \overline{F_{z_2}(x)} \, dm_K(x) = 0, \quad \textrm{for all $(y,\varpi^m) \in H$},
\]
where $F_{z}(x) = \lambda(xz) \, \chi_{\cO}(x)$ for $z \in K^*$. In particular, $(F_{z_1},F_{z_2})$ is a $m_K$-decoupled pair.
\end{lemma}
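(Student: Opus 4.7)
The plan is to expand the integrand via the multiplicativity of $\lambda$, isolate the part depending on $y$, restrict to the support condition $x, y+\varpi^m x \in \cO$, and reduce everything to an integral of the form $\int_\cO \lambda(ub)\,dm_K(u)$ for some $b \in K$ with $\lambda|_{b\cO}$ nontrivial.

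First I would write, using $\lambda(u+v) = \lambda(u)\lambda(v)$ and $\overline{\lambda(u)} = \lambda(-u)$,
\[
F_{z_1}(y+\varpi^m x)\overline{F_{z_2}(x)} = \lambda(y z_1)\,\lambda(xa)\,\chi_\cO(y+\varpi^m x)\,\chi_\cO(x), \quad a := \varpi^m z_1 - z_2.
\]
Pulling out $\lambda(yz_1)$, the integral becomes $\lambda(yz_1) \int_{E(y,m)} \lambda(xa)\,dm_K(x)$, where $E(y,m) := \{x \in \cO : y + \varpi^m x \in \cO\}$.

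Next, a short case analysis describes $E(y,m)$. For $m \geq 0$, $\varpi^m\cO \subseteq \cO$, so $E(y,m) = \cO$ if $y \in \cO$ and $E(y,m) = \emptyset$ otherwise. For $m = -m' < 0$, an ultrametric computation shows $E(y,m) = \emptyset$ if $|y| > q^{m'}$, while $E(y,m) = -\varpi^{m'}y + \varpi^{m'}\cO$ otherwise. In either nontrivial case, a translation and rescaling $x = c + \varpi^k u$ with $k := \max(-m,0)$ turn the relevant integral into a nonzero scalar multiple of $\int_\cO \lambda(ub)\,dm_K(u)$, where $b = \varpi^k a$.

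Finally, I would verify $\lambda|_{b\cO} \neq 1$ in each remaining case, which kills the last integral by standard character orthogonality on the compact group $\cO$. Since $|z_1| = |z_2| = 1$, the ultrametric inequality gives $|b| = 1$ whenever the two summands defining $b$ have distinct valuations, which automatically covers every $m \neq 0$: for $m \geq 1$ one has $|\varpi^m z_1| = q^{-m} < 1 = |z_2|$, and for $m = -m' < 0$ one gets $b = z_1 - \varpi^{m'} z_2$ with $|z_1| = 1 > q^{-m'} = |\varpi^{m'} z_2|$. For $m = 0$, $b = z_1 - z_2$ and $|b| \geq 1/q$ by hypothesis. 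In all cases $b\cO \supseteq \varpi\cO$, so the assumption $\lambda|_{\varpi\cO} \neq 1$ forces $\lambda|_{b\cO} \neq 1$, concluding the argument.

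The only real obstacle is bookkeeping through the $E(y,m)$ case split and the associated change-of-variables scalars; conceptually everything collapses to the fact that a nontrivial character of a compact group integrates to zero over that group.
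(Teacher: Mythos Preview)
Your proof is correct and follows essentially the same route as the paper: both expand the integrand via the character law to obtain $\lambda(yz_1)\int_{E(y,m)}\lambda(xa)\,dm_K(x)$ with $a=\varpi^m z_1 - z_2$, split into the cases $m\ge 0$ and $m<0$, identify the domain of integration as (a coset of) $\varpi^{\max(-m,0)}\cO$, and then use the ultrametric inequality together with $|z_1-z_2|\ge 1/q$ and $\lambda|_{\varpi\cO}\neq 1$ to see that the resulting character is nontrivial on that subgroup. Your device of substituting $x=c+\varpi^k u$ and working with $b=\varpi^k a$ is a mild repackaging of the paper's direct check that $\varpi\cO\subseteq w_m\,\varpi^{-m}\cO$, but the underlying estimates and the appeal to character orthogonality are identical.
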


\begin{remark}
For example, the assumptions on $z_1$ and $z_2$ are satisfied for
\[
z_1 = 1 \qand z_2 = 1 + \varpi.
\]
Furthermore, if $\lambda'$ is a non-trivial character on $K$ (which always exists), then it must be non-trivial on some subgroup of $K$ of the form $\varpi^{-N} \cO$ for some $N \geq 0$ (since these subgroups exhaust $K$). If we set 
\[
\lambda(x) = \lambda'(x \varpi^{-(N+1)}), \quad \textrm{for $x \in K$}, 
\]
then $\lambda$ is a character on $K$ which is non-trivial on $\varpi \cO$.
\end{remark}

\begin{proof}
We first note that for all $z \in K$ and $\lambda \in \widehat{K}$,
\vspace{0.1cm}
\[
F_{z}(y+\varpi^m x) = \lambda(yz) \lambda(x \varpi^m z) \, \chi_{\varpi^{-m}(\cO-y)}(x), \quad \textrm{for all $y \in K$ and $m \in \bZ$},
\]
\vspace{0.1cm}
whence, for all $y, z_1, z_2 \in K$ and $m \in \bZ$,
\vspace{0.1cm}
\begin{eqnarray}
\int_K F_{z_1}(y+\varpi^m x) \overline{F_{z_2}(x)} \, dm_K(x)
&=&
\lambda(yz_1) \int_{K} \lambda(x(\varpi^m z_1 - z_2)) \chi_{\varpi^{-m}(\cO-y) \cap \cO}(x) \, dm_K(x) \nonumber \\[0.2cm]
&=&
\lambda(yz_1) \int_{\cO_{y,m}} \lambda(x w_{m}) \, dm_K(x), \nonumber
\end{eqnarray}
\vspace{0.1cm}
where 
\[
\cO_{y,m} = \varpi^{-m}(\cO-y) \cap \cO \qand w_m = \varpi^m z_1 - z_2.
\] 
In what follows, we shall assume that
\[
z_1, z_2 \in \cO^* \qand |z_1 - z_2| \geq \frac{1}{q} \qand \lambda|_{\varpi \cO} \neq 1,
\]
and show that $(F_{z_1},F_{z_2})$ is a $m_K$-decoupled pair. Note that the first two assumptions imply that $w_m \neq 0$ for all $m \in \bZ$. \\

To prove the lemma, it clearly suffices to show that 
\begin{equation}
\label{suffices}
\int_{\cO_{y,m}} \lambda_m(x) \, dm_K(x) = 0, \quad \textrm{for all $y \in K$ and $m \in \bZ$},
\end{equation}
where $\lambda_m$ is the (non-trivial) character on $K$ given by $\lambda_m(x) = \lambda(w_m x)$, for $x \in K$. \\

If $\cO_{y,m} = \emptyset$, then \eqref{suffices} is automatic. Note that
\[
\cO_{y,m} \neq \emptyset \iff y \in \cO - \varpi^{m} \cO = 
\left\{ 
\begin{array}{cl}
\cO & \textrm{if $m \geq 0$} \\[0.2cm]
\varpi^{m} \cO & \textrm{if $m < 0$}
\end{array}
\right..
\]
We shall deal with the cases $m \geq 0$ and $m < 0$ separately.

\subsubsection*{\textsc{Case I: $m \geq 0$ and $y \in \cO$}}

In this case, $\cO_{y,m} = \cO$. To prove \eqref{suffices}, it suffices to show that $\lambda_m|_{\cO} \neq 1$ (since the Haar-integral of any non-trivial character on a compact abelian group is zero). Note that
\[
\lambda_m|_{\cO} \neq 1 \iff \lambda|_{w_m \cO} \neq 1.
\]
Since $\lambda|_{\varpi \cO} \neq 1$, it thus suffices to show that $\varpi \cO \subseteq w_m \cO$, or equivalently, $|w_m| \geq q^{-1}$. The case $m = 0$ holds by assumption; indeed
$|w_o| = |z_1-z_2| \geq q^{-1}$. If $m \geq 1$, then since $z_1, z_2 \in \cO^*$, and thus 
$|z_1| = |z_2| = 1$, we have
\[
1 = |z_2| = |w_m - \varpi^m z_1| \leq \max(|w_m|,\underbrace{|\varpi^m z_1|}_{< 1}) = |w_m|,
\]
by the ultra-metric triangle inequality.

\subsubsection*{\textsc{Case II: $m < 0$ and $y \in \varpi^m \cO$}}

In this case, 
\begin{align*}
\cO_{y,m}
&=
\varpi^{-m}(\cO-y) \cap \cO = \varpi^{-m}((\cO-y) \cap \varpi^m \cO) \\[0.2cm]
&=
\varpi^{-m}(\cO \cap (\varpi^m \cO + y) - y) = \varpi^{-m}(\cO \cap \varpi^m \cO - y) \\[0.2cm]
&=
 \varpi^{-m}(\cO - y) = \varpi^{-m}\cO - \varpi^{-m}y.
\end{align*}
Hence,
\[
\int_{\cO_{y,m}} \lambda_m(x) \, dm_K(x) = \lambda_m(-\varpi^{-m}y) \, \int_{\varpi^{-m}\cO} \lambda_m(x) \, dm_K(x),
\]
and thus to establish \eqref{suffices}, we only need to show that $\lambda_m|_{\varpi^{-m}\cO} \neq 1$(since the Haar-integral of any non-trivial character on a compact abelian group is zero). Note that
\[
\lambda_m|_{\varpi^{-m}\cO} \neq 1 \iff \lambda|_{w_m \varpi^{-m}\cO} \neq 1.
\]
Since $\lambda|_{\varpi \cO} \neq 1$, it thus suffices to establish the inclusion $\varpi \cO \subseteq w_m \varpi^{-m}\cO$, or equivalently, the lower bound $|w_m| \geq q^{-(m+1)}$. However, since $m < 0$ and $|z_1| = |z_2| = 1$, we have
\[
1 < q^{-m} = |\varpi^{m}z_1| = |w_m+z_2| \leq \max(|w_m|,\underbrace{|z_2|}_{=1}) = |w_m|,
\]
by the ultra-metric triangle inequality.

\end{proof}

\begin{remark}
The non-$L^2$-irreducibility of $\sigma_{m_K,2}$ can be proved abstractly by appealing to the Mackey Little Group Method, developed in 
the seminal paper \cite{Mac}, according to which it suffices (in order to establish \textsc{iv}) to show that the multiplicative group $\langle \varpi \rangle$ does not act 
ergodically on $\widehat{K}$. Indeed, any $\langle \varpi \rangle$-invariant measurable subset, which is neither null nor co-null with respect
to $m_{\widehat{K}}$, provides a non-trivial $H$-equivariant projection on $L^2(K,m_K)$ (which in particular implies that $\sigma_{m_K,2}$
is not $L^2$-irreducible). However, it is not immediately clear that this projection extends continuously to other $L^p$-spaces, which is why we 
have preferred to outline the more constructive approach above.
\end{remark}

\section{Proof of Theorem \ref{thm4}}

Throughout this section, we shall fix
\vspace{0.1cm}
\begin{itemize}
\item a discrete countable \emph{prime} measured group $(\Xi,\sigma_o)$ with finite entropy. \vspace{0.2cm}
\item a positive summable sequence $\beta = (\beta_k)$.
\end{itemize}
\vspace{0.1cm}
Our aim is to construct from $\sigma_o$ a probability measure $\tau_\beta$ on the direct sum 
\[
\Gamma := \bigoplus_{\bN} \Xi = \bigcup_{n=1}^\infty \Big( \prod_{k=1}^n \Xi\Big),
\]
(with the obvious inclusions), whose support generates $\Gamma$ as a semi-group,  such that 
\[
\BndEnt(\Gamma,\tau_\beta) = \Big\{ \sum_{k \in S} \beta_k \, \mid \, S \subseteq \bN \Big\}.
\]
The following lemma, which will be proved below, provides all of the necessary ingredients in the construction of $\tau_\beta$.
\begin{lemma}
\label{lemma_pq}
There exist
\vspace{0.1cm}
\begin{enumerate}
\item[\textsc{(i)}] a probability measure $\sigma$ on $\Xi$ with finite entropy $h(\sigma)$, whose support generates $\Xi$ as a semi-group, such 
that $\sigma_o * \sigma = \sigma * \sigma_o$, \vspace{0.2cm}
\item[\textsc{(ii)}] a summable sequence $(p_k)$ such that $p_k \in (0,1)$ for all $k$, \vspace{0.2cm}
\item[\textsc{(iii)}] a positive sequence $(q_k)$ such that 
\[
1 = q_1 > q_2 > \ldots \qand \lim_k q_k = 0,
\]
\end{enumerate}
so that $\beta_k = p_k q_k h(\sigma) $ for all $k$.
\end{lemma}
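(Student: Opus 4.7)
The plan is to construct $\sigma$ as a convex combination involving a convolution power of $\sigma_o$, which lets me tune $h(\sigma)$ to exceed $\sum_k \beta_k$, and then to factor the normalized sequence $\beta_k/h(\sigma)$ by an Abel-type estimate.

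First I would set
\[
\sigma := \tfrac{1}{2}\,\sigma_o + \tfrac{1}{2}\,\sigma_o^{*N}
\]
for an integer $N$ to be chosen. Being a polynomial in $\sigma_o$ in the convolution algebra, $\sigma$ commutes with $\sigma_o$; since $\supp(\sigma) \supseteq \supp(\sigma_o)$, its support generates $\Xi$ as a semigroup. A standard subordination argument (using that $\sigma$ lies in the commutative convolution subalgebra generated by $\sigma_o$) identifies the Poisson boundary of $(\Xi,\sigma)$ with that of $(\Xi,\sigma_o)$, call it $(B_o,\nu_o)$. Lemma~\ref{lemma_propEntropy}, which asserts that the Furstenberg entropy on a fixed Borel space is both affine and additive under convolution, then evaluates
\[
h(\sigma) \;=\; h_\sigma(B_o,\nu_o) \;=\; \tfrac{1}{2}\, h(\sigma_o) + \tfrac{1}{2}\, N\, h(\sigma_o) \;=\; \tfrac{N+1}{2}\, h(\sigma_o).
\]
Since $h(\sigma_o) \in (0,\infty)$ by assumption (non-triviality of the prime Poisson boundary and the finite-entropy hypothesis), $h(\sigma)$ is finite, and $N$ can be chosen so that $h(\sigma) > \sum_k \beta_k$.

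Next I would factor. Set $c := h(\sigma)$, $\gamma_k := \beta_k/c > 0$, and $R_k := \sum_{j \geq k}\gamma_j$; then $R_1 < 1$ and $(R_k)$ is strictly decreasing to $0$. Fix $\eta \in (0,1)$, for instance $\eta = 1/2$, and define
\[
q_k := (R_k/R_1)^\eta, \qquad p_k := \gamma_k/q_k.
\]
By construction $q_1 = 1$, $(q_k)$ is strictly decreasing to $0$, and $p_k q_k h(\sigma) = \beta_k$. The bound $\gamma_k \leq R_k$ yields $p_k \leq R_k^{1-\eta} R_1^\eta \leq R_1 < 1$. Applying the mean value theorem to $x \mapsto x^{1-\eta}$ on $[R_{k+1},R_k]$ gives $(1-\eta) R_k^{-\eta}\gamma_k \leq R_k^{1-\eta} - R_{k+1}^{1-\eta}$, whose partial sums telescope to yield $\sum_k p_k \leq R_1/(1-\eta) < \infty$.

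The main obstacle will be the Poisson-boundary identification in the first step: one must verify that the bounded $\sigma$-harmonic functions on $\Xi$ coincide with the bounded $\sigma_o$-harmonic ones, so that Lemma~\ref{lemma_propEntropy} applies to yield the explicit formula for $h(\sigma)$ above. This is not automatic for arbitrary convex combinations, but for this particular $\sigma$ it follows from standard subordination arguments, together with the fact that the polynomial $z + z^N - 2$ has $z=1$ as its only root in $\overline{\bD}$. Everything after this point---the factorization and the two estimates---is elementary.
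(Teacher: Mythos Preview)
Your proof is correct, but it differs from the paper's in two places worth noting.

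\textbf{Construction of $\sigma$ and the boundary identification.} The paper takes $\sigma=(1-\varepsilon)\delta_e+\varepsilon\,\sigma_o^{*N}$ and tunes $h(\sigma)$ \emph{exactly} to a prescribed value, whereas you take $\sigma=\tfrac12\sigma_o+\tfrac12\sigma_o^{*N}$ and only need $h(\sigma)>\sum_k\beta_k$. Your choice has the mild advantage that $\supp(\sigma)\supseteq\supp(\sigma_o)$, so spread-outness is immediate. For the Poisson-boundary identification you invoke a subordination argument via the root condition on $z+z^N-2$; this works, but the paper already contains a shorter route: since $\sigma$ and $\sigma_o$ are spread-out and commute, Corollary~\ref{cor_mu1mu2} gives $\Prob_\sigma=\Prob_{\sigma_o}$ on any compact model, and then Proposition~\ref{prop_proxcrit}\textsc{(ii)} shows $\sigma$-proximality and $\sigma_o$-proximality coincide, whence the two Poisson boundaries agree (each is a factor of the other). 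Using this in place of subordination would make your argument entirely internal to the paper.

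\textbf{The factorization $\beta_k=p_kq_kh(\sigma)$.} Here the approaches are genuinely different. The paper first invokes an auxiliary lemma producing a strictly increasing sequence $(w_k)\nearrow\infty$ with $\sum_k\beta_kw_k<\infty$, rescales so that $\beta_kw_k\in(0,1)$, sets $p_k=\beta_kw_k$ and $q_k=1/(h(\sigma)w_k)$, and then tunes $h(\sigma)=1/w_1$ to force $q_1=1$. Your construction is more explicit and self-contained: with $\gamma_k=\beta_k/h(\sigma)$ and tails $R_k=\sum_{j\ge k}\gamma_j$, you set $q_k=(R_k/R_1)^{1/2}$ and $p_k=\gamma_k/q_k$, getting $p_k<1$ from $\gamma_k\le R_k$ and summability of $(p_k)$ from the telescoping mean-value estimate $(1-\eta)R_k^{-\eta}\gamma_k\le R_k^{1-\eta}-R_{k+1}^{1-\eta}$. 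Both arguments are elementary; yours avoids the separate existence lemma at the cost of a slightly more delicate inequality, while the paper's trades that inequality for an extra (easy) preparatory step.
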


\subsection{Proof of Theorem \ref{thm4} assuming Lemma \ref{lemma_pq}} 

We denote by $(B_o,\nu_o)$ the Poisson boundary of $(\Xi,\sigma_o)$. Since $\sigma * \sigma_o = \sigma_o * \sigma$, \textsc{(II)} in Proposition 
\ref{prop_proxcrit} tells us that $(B_o,\nu_o)$ is also a maximal $\sigma$-proximal Borel $(\Xi,\sigma)$-space, which is prime by assumption. \\

For $k \geq 1$, we define $\sigma_k \in \Prob(\Xi)$ by
\[
\sigma_k = (1-p_k)\delta_e + p_k \sigma.
\] 
Since $\sigma * \sigma_k = \sigma_k * \sigma$, \textsc{(II)} Proposition \ref{prop_proxcrit} again tells us that $(B_o,\nu_o)$ is a maximal 
$\sigma_k$-proximal Borel $(\Xi,\sigma_k)$-space, whence the Poisson boundary of $(\Xi,\sigma_k)$. Furthermore, by
Lemma \ref{lemma_propEntropy},
\[
h_k := h_{\sigma_k}(B_o,\nu_o) = p_k h_{\sigma}(B_o,\nu_o) = p_k h(\sigma), \quad \textrm{for all $k$}.
\]
\vspace{0.3cm}
We now define the probability measure $\widetilde{\tau}$ on the direct product $\prod_{\bN} \Xi$ by
\[
\widetilde{\tau} = \sigma_1 \otimes \sigma_2 \otimes \ldots,
\]
Since $(p_k)$ is summable and $\sigma_k(e) \geq 1-p_k$ for all $k$,
\[
\widetilde{\tau}(e,e,\ldots) = \prod_{k=1}^\infty \sigma_k(e) \geq \prod_{k=1}^\infty (1-p_k) > 0.
\]
By \cite[Section 3.V]{BS}, this implies that $\widetilde{\tau}$ gives full measure to $\Gamma < \prod_{\bN} \Xi$. Since each $\sigma_k$ generates 
$\Xi$ as a semigroup, we see
that $\widetilde{\tau}$ is a spread-out probability measure on $\Gamma$. \\

Since $(B_o,\nu_o)$ is a prime Borel $(\Xi,\sigma_k)$-space for every $k$, the following proposition is now a special case of \cite[Theorem 3.6]{BS}.
\begin{proposition}
\label{prop_adelic}
Every $\widetilde{\tau}$-boundary is of the form 
\[
(B_I,\nu_I) := \prod_{k \in I} (B_o,\nu_o), \quad \textrm{for some $I \subseteq \bN$}.
\]
In particular, the Poisson boundary of $(\Gamma,\widetilde{\tau})$ is $(B_{\bN},\nu_{\bN})$.
\end{proposition}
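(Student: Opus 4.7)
The plan is to verify the hypotheses of \cite[Theorem 3.6]{BS}, which is a general classification of $\tau$-boundaries and Poisson boundaries for product-type random walks on direct sums of countable groups, and then read off both conclusions of the proposition from it.

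The hypotheses to check are that for every $k \geq 1$, the Borel $\Xi$-space $(B_o, \nu_o)$ is the Poisson boundary of $(\Xi, \sigma_k)$, is prime as a Borel $(\Xi, \sigma_k)$-space, and has finite Furstenberg entropy with respect to $\sigma_k$. I would establish all three by transferring the corresponding properties from $\sigma_o$ to $\sigma_k$. By Lemma \ref{lemma_pq}(i), $\sigma$ and $\sigma_o$ commute under convolution, and since $\sigma_k = (1-p_k)\delta_e + p_k \sigma$, so do $\sigma_k$ and $\sigma_o$. Both measures are spread-out on $\Xi$ (for $\sigma_k$, note $\supp(\sigma_k) \supseteq \supp(\sigma)$, which generates $\Xi$ as a semi-group). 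Proposition \ref{prop_proxcrit}(ii) then equates $\sigma_o$-proximality with $\sigma_k$-proximality for any Borel $\Xi$-space, so $(B_o,\nu_o)$ is a maximal $\sigma_k$-proximal $\Xi$-space, i.e., the Poisson boundary of $(\Xi, \sigma_k)$. Primeness as a $(\Xi, \sigma_k)$-space follows similarly: given a measure-preserving $\Xi$-factor $\pi : (B_o, \nu_o) \to (Y,\eta)$, the direct identity $\sigma_o * \eta = \pi_*(\sigma_o * \nu_o) = \eta$ shows that $\eta$ is automatically $\sigma_o$-stationary (alternatively, Corollary \ref{cor_mu1mu2} applied to a compact model of $(Y,\eta)$), so primeness of $(B_o,\nu_o)$ as a $(\Xi,\sigma_o)$-space forces $\pi$ to be trivial or an isomorphism. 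Finite entropy is immediate from Lemma \ref{lemma_propEntropy} together with the computation $h_{\sigma_k}(B_o,\nu_o) = p_k h(\sigma)$.

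With the hypotheses in place, \cite[Theorem 3.6]{BS} delivers both assertions simultaneously: the $\widetilde{\tau}$-boundaries are exactly the sub-products $(B_I,\nu_I)$ for $I \subseteq \mathbb{N}$, and the maximal element $(B_{\mathbb{N}},\nu_{\mathbb{N}})$ is the Poisson boundary of $(\Gamma, \widetilde{\tau})$. The main obstacle, already settled in \cite{BS}, is the classification direction: every measure-preserving $\Gamma$-factor of the infinite product must coincide with a sub-product. The underlying idea is that for each coordinate $k$, primeness of $(B_o, \nu_o)_k$ as a $\Xi_k$-space forces the projection of any such factor onto the $k$-th coordinate to be either essentially trivial or essentially $(B_o, \nu_o)_k$; a relative-product/joining argument then patches these per-coordinate dichotomies into a global sub-product description. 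The summability $\sum_k p_k < \infty$ (which keeps $\widetilde{\tau}$ supported in $\Gamma$) and finite entropy per coordinate are the quantitative inputs that make that argument work. For the present proof, only the verification of hypotheses above is required.
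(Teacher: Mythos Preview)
Your proposal is correct and takes essentially the same approach as the paper: verify that each coordinate $(B_o,\nu_o)$ is a prime Poisson boundary for $(\Xi,\sigma_k)$ with finite entropy, then invoke \cite[Theorem 3.6]{BS}. Your justification of primeness for $\sigma_k$ is in fact more explicit than the paper's, which simply writes ``prime by assumption''; your observation that a measure-preserving $\Xi$-factor of $(B_o,\nu_o)$ is automatically $\sigma_o$-stationary (so the notion of primeness is independent of which spread-out measure one uses) is exactly what makes that phrase legitimate.
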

Let us now turn to the construction of $\tau_\beta$. We recall from Lemma \ref{lemma_pq} that
\begin{equation}
\label{bk}
\beta_k = p_k q_k \, h(\sigma), \quad \textrm{for all $k \geq 1$}.
\end{equation}
For $n \geq 1$, we set $\alpha_n = q_n - q_{n+1} > 0$, so that
\begin{equation}
\label{qk}
\sum_{n=1}^\infty \alpha_n = 1 \qand \sum_{n=k}^\infty \alpha_n = q_k \enskip \textrm{for all $k \geq 1$},
\end{equation}
and define $\tau_\beta \in \Prob(\Gamma)$ by
\[
\tau_\beta = \sum_{n=1}^\infty \, \alpha_n \, \sigma_1 \otimes \cdots \otimes \sigma_n,
\]
with the obvious interpretation of each term as a probability measure on $\Gamma$. Clearly, 
\[
\tau_\beta * \widetilde{\tau} = \widetilde{\tau} * \tau_\beta,
\]
so by \textsc{(II)} in Proposition \ref{prop_proxcrit}, every $\widetilde{\tau}$-boundary is a $\tau_\beta$-boundary (and vice versa).
Hence, by Proposition \ref{prop_adelic} 
\[
\BndEnt(\Gamma,\tau_\beta) = \Big\{ h_{\tau_\beta}(B_I,\nu_I) \, \mid \, I \subseteq \bN \Big\}.
\]
It remains for us to compute $h_{\tau_\beta}(B_I,\nu_I)$ for every $I \subseteq \bN$. By Lemma \ref{lemma_propEntropy} and after some simple 
rearrangements and applying \eqref{bk} and \eqref{qk}, we see that
\begin{eqnarray*}
h_{\tau_\beta}(B_I,\nu_I) 
&=& 
\sum_{n=1}^\infty \alpha_n \, h_{\sigma_1 \otimes \cdots \otimes \sigma_n}(B_{I \cap [1,n]},\nu_{I \cap [1,n]})= \sum_{n=1}^\infty \alpha_n \Big( \sum_{k \in I \cap [1,n]} h_k \Big) \\
&=&
\sum_{k \in I}  \Big( \sum_{n = k}^\infty \alpha_n \Big) \, h_k = \sum_{k \in I} p_k \, q_k h(\sigma) = \sum_{k \in I} \beta_k,
\end{eqnarray*}
which finishes the proof.

\subsection{Proof of Lemma \ref{lemma_pq}}

We begin by stating the following simple lemma, whose proof is left to the reader.

\begin{lemma}
For every positive summable sequence $(\beta_k)$, there is a strictly increasing positive sequence $(w_k)$ with $w_k \ra \infty$ as $k \ra \infty$
such that $\sum_{k=1}^\infty \beta_k \, w_k < \infty$.
\end{lemma}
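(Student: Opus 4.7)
The plan is a standard tail-mass construction. First I would introduce the tail sums
\[
R_n := \sum_{k=n}^\infty \beta_k, \qquad n \geq 1,
\]
which are well-defined since $(\beta_k)$ is summable. Because $\beta_k > 0$ for all $k$, the sequence $(R_n)$ is strictly positive, strictly decreasing, and $R_n \to 0$ as $n \to \infty$. These are the only properties I will need.

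Next I would propose the explicit candidate
\[
w_k := \frac{1}{\sqrt{R_k}}, \qquad k \geq 1.
\]
Strict monotonicity of $(R_k)$ immediately gives that $(w_k)$ is strictly increasing, and $R_k \to 0$ gives $w_k \to \infty$, so the first two requirements are free. It remains to check summability of $\sum_k \beta_k w_k$.

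For that I would write $\beta_k = R_k - R_{k+1}$ and use the elementary inequality
\[
R_k - R_{k+1} \;=\; (\sqrt{R_k}-\sqrt{R_{k+1}})(\sqrt{R_k}+\sqrt{R_{k+1}}) \;\leq\; 2\sqrt{R_k}\,(\sqrt{R_k}-\sqrt{R_{k+1}}),
\]
which is valid because $\sqrt{R_{k+1}} \leq \sqrt{R_k}$. Dividing by $\sqrt{R_k}$ gives
\[
\beta_k w_k \;=\; \frac{R_k-R_{k+1}}{\sqrt{R_k}} \;\leq\; 2\bigl(\sqrt{R_k}-\sqrt{R_{k+1}}\bigr).
\]
Summing this telescoping bound over $k \geq 1$ yields
\[
\sum_{k=1}^\infty \beta_k w_k \;\leq\; 2\sqrt{R_1} \;=\; 2\Bigl(\sum_{k=1}^\infty \beta_k\Bigr)^{1/2} \;<\; \infty,
\]
which completes the proof. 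There is no real obstacle here; the only mildly non-obvious move is picking the exponent $1/2$ (any exponent in $(0,1)$ would work in place of $1/2$, thanks to the mean-value-type bound $x-y \leq \alpha^{-1} x^{1-\alpha}(x^\alpha - y^\alpha)$ for $0 \leq y \leq x$), and the telescoping trick that turns the estimate into a finite bound.
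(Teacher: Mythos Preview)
Your argument is correct: the tail-mass choice $w_k = R_k^{-1/2}$ with $R_k = \sum_{j \geq k}\beta_j$ gives a strictly increasing sequence tending to infinity, and the telescoping bound $\beta_k w_k \leq 2(\sqrt{R_k}-\sqrt{R_{k+1}})$ yields $\sum_k \beta_k w_k \leq 2\sqrt{R_1} < \infty$. The paper itself leaves the proof of this lemma to the reader, so there is no authorial argument to compare against; your construction is a standard and clean way to fill the gap.
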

 
Let us now fix a positive summable sequence $(\beta_k)$, as well as a strictly increasing positive sequence $(w_k)$ as in the lemma above. Upon scaling
the sequence $(w_k)$ we can clearly ensure that
\[
\sum_{k=1}^\infty \beta_k \, w_k < \infty \qand \beta_k w_k \in (0,1) \enskip \textrm{for all $k \geq 1$}.
\]
Given a probability measure $\sigma$ on $\Xi$ with finite positive entropy $h(\sigma)$, we now set 
\[
p_k = \beta_k \, w_k \qand q_k = \frac{1}{h(\sigma) \, w_k}.
\] 
Then $(q_k)$ is a strictly decreasing positive sequence with $q_k \ra 0$ as $k \ra \infty$, and thus to establish Lemma \ref{lemma_pq} 
it remains to prove that we can always find $\sigma \in \Prob(\Xi)$ such that
\[
\sigma * \sigma_o = \sigma_o * \sigma \qand h(\sigma) = \frac{1}{w_1}.
\]
To do this, let us define 
\[
\sigma_{\eps,N} = (1-\eps)\delta_e + \eps \sigma_o^{*N}, \quad \textrm{for $\eps \in (0,1)$ and $N \geq 1$}.
\]
By Lemma \ref{lemma_propEntropy}, $h(\sigma_{\eps,N}) = \eps N h(\sigma_o)$. If we choose $\eps$ and $N$ so 
that
\[
\eps N h(\sigma_o) = \frac{1}{w_1},
\]
we see that $h(\sigma_{\eps,N}) = \frac{1}{w_1}$. Clearly $\sigma := \sigma_{N,\eps}$ is spread-out (since $\sigma_o$ is spread-out), and
\[
\sigma * \sigma_o = \sigma_o * \sigma,
\]
which finishes the proof.



\begin{thebibliography}{99}
\bibitem{AD0}
Anantharaman-Delaroche, Claire
\emph{Invariant proper metrics on coset spaces.} 
Topology Appl. 160 (2013), no. 3, 546--552. 

\bibitem{AD}
Anantharaman-Delaroche, Claire 
\emph{Approximation properties for coset spaces and their operator algebras.} The varied landscape of operator theory, 23--45, Theta Ser. Adv. Math., 17, Theta, Bucharest, 2014.

\bibitem{AD1}
Anantharaman-Delaroche, Claire 
\emph{Amenable actions preserving a locally finite metric}. Expo. Math. 36 (2018), no. 3-4, 278--301.

\bibitem{Az}
Azencott, Robert 
\emph{Espaces de Poisson des groupes localement compacts.} (French) Lecture Notes in Mathematics, 
Vol. 148. Springer-Verlag, Berlin-New York, 1970.

\bibitem{BD}
Bader, Uri; Dymara, Jan 
\emph{Boundary unitary representations -- right-angled hyperbolic buildings.} J. Mod. Dyn. 10 (2016), 413--437.

\bibitem{BM}
Bader, Uri; Muchnik, Roman
\emph{Boundary unitary representations -- irreducibility and rigidity.} 
J. Mod. Dyn. 5 (2011), no. 1, 49--69. 

\bibitem{BS}
Bader, Uri; Shalom, Yehuda,
\emph{Factor and normal subgroup theorems for lattices in products of groups.}
Invent. Math. 163 (2006), no. 2, 415--454. 

\bibitem{BL}
Ballmann, Werner; Ledrappier, Francois
\emph{Discretization of positive harmonic functions on Riemannian manifolds and Martin boundary.} 
Actes de la Table Ronde de G\'eom\'etrie Diff\'erentielle (Luminy, 1992), 77--92, 
S\'emin. Congr., 1, Soc. Math. France, Paris, 1996. 

\bibitem{BPS}
B\'ar\'any, Bal\'azs; Pollicott, Mark; Simon,K\'aroly.
\emph{Stationary measures for projective transformations: the Blackwell and Furstenberg measures.} (
J. Stat. Phys. 148 (2012), no. 3, 393--421. 

\bibitem{BQ}
Benoist, Yves; Quint, Jean-Francois
\emph{On the regularity of stationary measures.} 
Israel J. Math. 226 (2018), no. 1, 1--14.

\bibitem{Bou}
Bourgain, Jean,
\emph{Finitely supported measures on $\SL_2(\bR)$ that are absolutely continuous at infinity}, Springer Lecture Notes 2050 (2012) 133--141.

\bibitem{Bo}
Bowen, Lewis,
\emph{Random walks on random coset spaces with applications to Furstenberg entropy.} Invent. Math. 196 (2014), no. 2, 485--510.

\bibitem{BoHaTa}
Bowen, Lewis; Hartman, Yair; Tamuz, Omer 
\emph{Generic stationary measures and actions.} Trans. Amer. Math. Soc. 369 (2017), no. 7, 4889--4929. 

\bibitem{BrGe}
Breuillard, Emmanuel; Gelander, Tsachik,
\emph{A topological Tits alternative} 
Ann. of Math. (2) 166 (2007), no. 2, 427--474. 

\bibitem{BrTa}
Brieussel, J\'er\'emie; Tanaka, Ryokichi,
\emph{Discrete random walks on the group Sol.} Israel J. Math. 208 (2015), no. 1, 291--321. 

\bibitem{Br0}
Brofferio, Sara, 
\emph{Poisson boundary for finitely generated groups of rational affinities.}
 Functional analysis. J. Math. Sci. (N.Y.) 156 (2009), no. 1, 1--10.

\bibitem{Br}
Brofferio, Sara,
\emph{The Poisson boundary of random rational affinities.} 
Ann. Inst. Fourier (Grenoble) 56 (2006), no. 2, 499--515.

\bibitem{BrSc}
Brofferio, Sara; Schapira, Bruno,
\emph{Poisson boundary of $\GL_d(\bQ)$}. Israel J. Math. 185 (2011), 125--140.

\bibitem{BuLuTa}
Burton, Peter;Lupini, Martino ; Tamuz, Omer,
\emph{Weak equivalence of stationary actions and the entropy realization problem}
Preprint: \url{https://arxiv.org/abs/1603.05013}.

\bibitem{CM}
P.E. Caprace and T. de Medts, 
\emph{Simple locally compact groups acting on trees and their germs of automorphisms}, Transform. Groups 16 (2011), no. 2, 375--411.

\bibitem{CKW}
Cartwright, Donald.; Kaimanovich, Vadim.; Woess, Wolfgang,
\emph{Random walks on the affine group of local fields and of homogeneous trees.} 
Ann. Inst. Fourier (Grenoble) 44 (1994), no. 4, 1243--1288. 

\bibitem{CS}
Cartwright, Donald I.; Soardi, P. M.,
\emph{Convergence to ends for random walks on the automorphism group of a tree.}
Proc. Amer. Math. Soc. 107 (1989), no. 3, 817--823.

\bibitem{CM1}
Connell, Chris; Muchnik, Roman 
\emph{Harmonicity of quasiconformal measures and Poisson boundaries of hyperbolic spaces.} 
Geom. Funct. Anal. 17 (2007), no. 3, 707--769. 

\bibitem{CM2}
Connell, Chris; Muchnik, Roman 
\emph{Harmonicity of Gibbs measures.} Duke Math. J. 137 (2007), no. 3, 461--509. 

\bibitem{Cr}
Creutz, Darren
\emph{Dynamics of Group Actions on Quasi-Invariant Measure Spaces}.
Ph.D.-thesis UCLA, \url{http://www.dcreutz.com/publications/Creutz_2011_Dissertation.pdf}

\bibitem{CuSa}
Cuno, Johannes; Sava-Huss, Ecaterina, 
\emph{Random walks on Baumslag-Solitar groups.}  Israel J. Math. 228 (2018), no. 2, 627--663.

\bibitem{DD}
Deroin, Bertrand; Dujardin, Romain
\emph{Lyapunov exponents for surface group representations.} 
Comm. Math. Phys. 340 (2015), no. 2, 433--469. 

\bibitem{D}
Dudko, Artem 
\emph{On irreducibility of Koopman representations corresponding to measure contracting actions.} Groups Geom. Dyn. 12 (2018), no. 4, 1417--1427. 

\bibitem{Fur1}
Furstenberg, Hillel, 
\emph{A Poisson formula for semi-simple Lie groups}, 
Ann. of Math. 77 (1963), no. 2, pp. 335--386.

\bibitem{Fur2}
Furstenberg, Hillel, 
\emph{Noncommuting random products}, 
Trans. Amer. Math. Soc. 108 (1963), pp. 377--428.

\bibitem{Fur3}
Furstenberg, Hillel, 
\emph{Random walks and discrete subgroups of Lie groups}, 
Advances in Probability and Related Topics 1, pp. 1--63, Dekker, New York, 1971.

\bibitem{Fur4}
Furstenberg, Hillel, 
\emph{Boundary theory and stochastic processes on homogeneous spaces}, 
Proc. Symp. Pure Math. 26, pp. 193--229, American Mathematical Society, Providence, RI, 1973.

\bibitem{FuGl}
Furstenberg, Hillel; Glasner, Eli,
\emph{Stationary dynamical systems.} 
Dynamical numbers-interplay between dynamical systems and number theory, 1--28, Contemp. Math., 532, Amer. Math. Soc., Providence, RI, 2010.

\bibitem{GJ}
Gal, Swiatoslaw; Januszkiewicz, Tadeusz,
\emph{New a-T-menable HNN-extensions.} 
J. Lie Theory 13 (2003), no. 2, 383--385. 

\bibitem{GuJiTa}
Guivarc'h, Yves; Ji, Lizhen; Taylor, J. C. 
\emph{Compactifications of symmetric spaces.} Progress in Mathematics, 156. Birkh\"auser Boston, Inc., Boston, MA, 1998. xiv+284 pp.




\bibitem{GuNy}
Guthrie, J. A.; Nymann, J. E.
\emph{The topological structure of the set of subsums of an infinite series.} 
Colloq. Math. 55 (1988), no. 2, 323--327. 

\bibitem{HaKa}
Hartman, Yair; Kalantar, Mehrdad,
\emph{Stationary C*-dynamical systems}.
Preprint, \url{https://arxiv.org/pdf/1712.10133.pdf}

\bibitem{Horn}
Hornich, Hans 
\emph{\"Uber beliebige Teilsummen absolut konvergenter Reihen.} (German) Monatsh. Math. Phys. 49 (1941), 316--320. 

\bibitem{IY}
Ihara, Yasutaka
\emph{On discrete subgroups of the two by two projective linear group over $p$-adic fields.} 
J. Math. Soc. Japan 18 (1966), 219--235. 

\bibitem{J1}
Jaworski, Wojciech
\emph{A Poisson formula for solvable Lie groups.} (English summary) 
J. Anal. Math. 68 (1996), 183--208. 

\bibitem{J2}
Jaworski, Wojciech
\emph{Random walks on almost connected locally compact groups: boundary and convergence.} (English summary) 
J. Anal. Math. 74 (1998), 235--273. 

\bibitem{Kai0}
Kaimanovich, Vadim A. 
\emph{Poisson boundaries of random walks on discrete solvable groups.} Probability measures on groups, X (Oberwolfach, 1990), 205--238, Plenum, New York, 1991. 

\bibitem{Kai}
Kaimanovich, Vadim A,
The Poisson formula for groups with hyperbolic properties. Ann. of Math. (2) 152 (2000), no. 3, 659--692.

\bibitem{Kai1}
Kaimanovich, Vadim A.
\emph{The Poisson boundary of amenable extensions.} 
Monatsh. Math. 136 (2002), no. 1, 9--15. 

\bibitem{KV}
Kaimanovich, Vadim; Vershik, Anatolii,
\emph{Random walks on discrete groups: boundary and entropy.} Ann. Probab. 11 (1983), no. 3, 457--490.


\bibitem{KaWo}
Kaimanovich, Vadim A.; Woess, Wolfgang,
\emph{Boundary and entropy of space homogeneous Markov chains.} Ann. Probab. 30 (2002), no. 1, 323--363.

\bibitem{Kak}
Kakeya, S\^oichi,
\emph{On the partial sums of an infinite series.} Tohoku Sci. Rep. (1915), pp 159--163.


\bibitem{LS}
Lyons, Terry; Sullivan, Dennis
\emph{Function theory, random paths and covering spaces.}
J. Differential Geom. 19 (1984), no. 2, 299--323. 

\bibitem{Mac}
Mackey, George W.
\emph{Unitary representations of group extensions. I.}
Acta Math. 99 (1958), 265--311. 

\bibitem{Ma}
Margulis, Gregory A,
\emph{Discrete subgroups of semisimple Lie groups.} Ergebnisse der Mathematik und ihrer Grenzgebiete (3) [Results in Mathematics and Related Areas (3)], 17. Springer-Verlag, Berlin, 1991. x+388 pp.

\bibitem{Mo}
Monod, Nicolas,
\emph{Gelfand pairs admit an Iwasawa decomposition}
https://arxiv.org/abs/1902.09497

\bibitem{Moll}
M\"oller, R\"ognvaldur G. 
\emph{Ends of graphs. II.} Math. Proc. Cambridge Philos. Soc. 111 (1992), no. 3, 455--460.

\bibitem{N}
Nevo, Amos
\emph{The spectral theory of amenable actions and invariants of discrete groups.} 
Geom. Dedicata 100 (2003), 187--218. 



\bibitem{NZ}
Nevo, Amos; Zimmer, Robert J. 
\emph{Rigidity of Furstenberg entropy for semisimple Lie group actions.} Ann. Sci. \'Ecole Norm. Sup. (4) 33 (2000), no. 3, 321--343.

\bibitem{Ni}
Nitecki, Zbigniew,
\emph{Cantorvals and subsum sets of null sequences.} Amer. Math. Monthly 122 (2015), no. 9, 862--870.

\bibitem{RV}
Ramakrishnan, Dinakar; Valenza, Robert J.
Fourier analysis on number fields. 
Graduate Texts in Mathematics, 186. Springer-Verlag, New York, 1999. xxii+350 pp.

\bibitem{Ro}
Rosenblatt, Joseph 
\emph{Ergodic and mixing random walks on locally compact groups.} Math. Ann. 257 (1981), no. 1, 31--42.

\bibitem{Sch}
Schlichting, G\"unter,
\emph{On the periodicity of group operations.} Group theory (Singapore, 1987), 507--517, de Gruyter, Berlin, 1989.

\bibitem{TaZh}
Tamuz, Omer; Zheng, Tianyi,
\emph{On the spectrum of asymptotic entropies of random walks}.
Preprint: \url{https://arxiv.org/abs/1903.01312}

\bibitem{Tza}
Tzanev, Kroum,
\emph{Hecke C*-algebras and amenability.} 
J. Operator Theory 50 (2003), no. 1, 169--178.

\bibitem{Var}
Varadarajan, V. S. 
\emph{Geometry of quantum theory.} 
Second edition. Springer-Verlag, New York, 1985. xviii+412 pp.


\bibitem{Zi0}
Zimmer, Robert J.
\emph{Amenable ergodic group actions and an application to Poisson boundaries of random walks.}
J. Functional Analysis 27 (1978), no. 3, 350--372. 

\bibitem{Zi}
Zimmer, Robert J,
\emph{Ergodic theory and semisimple groups.}
Monographs in Mathematics, 81. Birkh\"auser Verlag, Basel, 1984. x+209 pp. ISBN: 3-7643-3184-4 

\end{thebibliography}
\end{document}